\let\oldsqrt\sqrt
\def\sqrt{\mathpalette\DHLhksqrt}
\def\DHLhksqrt#1#2{%
\setbox0=\hbox{$#1\oldsqrt{#2\,}$}\dimen0=\ht0
\advance\dimen0-0.2\ht0
\setbox2=\hbox{\vrule height\ht0 depth -\dimen0}%
{\box0\lower0.4pt\box2}}
\newcommand{\R}{\mathbb{R}} 
\newcommand{\N}{\mathbb{N}} 
\newcommand{\dist}{\textnormal{dist}} 
\newcommand{\diam}{\textnormal{diam}} 
\newcommand{\supp}{\textnormal{supp}} 
\newcommand{\G}{\Gamma} 
\newcommand{\ov}{\overline}
\renewcommand{\k}{\mathbf{k}}
\renewcommand{\phi}{\varphi}
\newcommand{\cB}{{\mathcal B}}
\newcommand{\cH}{{\mathcal H}}
\newcommand{\cO}{{\mathcal O}}
\newcommand{\cQ}{{\mathcal Q}}
\renewcommand{\O}{\Omega}
\newcommand{\Ds}{(-\Delta)^s}
\theoremstyle{definition}
\newtheorem{defi}{Definition}[section]
\newtheorem{remark}[defi]{Remark}
\theoremstyle{plain} 
\newtheorem{thm}[defi]{Theorem}
\newtheorem{prop}[defi]{Proposition}
\newtheorem{lemma}[defi]{Lemma}
\newcommand{\be}{\begin{equation}}
\newcommand{\ee}{\end{equation}}
\renewcommand{\d}{\delta }
\newcommand{\e }{\varepsilon }
\newcommand{\g }{\gamma}
\renewcommand{\l }{\lambda }
\newcommand{\n }{\nabla }
\newcommand{\calL }{\mathcal{L}}
\newcommand{\calR}{{\mathcal R}}
\theoremstyle{definition}
\numberwithin{equation}{section}
 \title[existence results for regional fractional laplacian]{existence results for nonlocal problems governed by the regional fractional Laplacian}
\author[Mouhamed Moustapha Fall and Remi Yvant Temgoua]{Mouhamed Moustapha Fall$^1$, Remi Yvant Temgoua$^{1,2}$} 
\address{$^1$ African Institute for Mathematical Sciences in Senegal (AIMS Senegal), KM 2, Route de Joal, B.P. 1418. Mbour, S\'en\'egal.}
\email{mouhamed.m.fall@aims-senegal.org}
\address{$^2$ Goethe-Universit\"{a}t Frankfurt, Institut f\"{u}r Mathematik.
Robert-Mayer-Str. 10, D-60629 Frankfurt, Germany.} 
\email{temgoua@math.uni-frankfurt.de, remi.y.temgoua@aims-senegal.org}
\date{\today}
\begin{document}
\maketitle

\begin{abstract}
The aim of the present paper is to study existence results of minimizers  of the critical fractional Sobolev constant on bounded domains.
Under some values of the fractional parameter we show that the best constant is achieved.   If  moreover the underlying domain is a ball, we obtain positive radial minimizers for all possible values of the fractional parameter in higher dimension, while we impose a positive mass condition in low dimension.
\end{abstract}
{\footnotesize
\begin{center}
\textit{Keywords.} Minimizers, Critical fractional Sobolev constant, Regional Fractional Laplacian.
\end{center}
}

\section{introduction and main results}\label{section:introduction}
Let $\Omega$ be a Lipschitz open set of $\R^N$,  $s\in (1/2,1)$ and $N>2s$.
The purpose of this paper is to study the existence of minimizers to the best Sobolev critical  constant
\begin{equation}\label{minimization-problem-on-domain-0}
S_{N,s}(\Omega)=\inf_{\substack{u\in H^s_0(\Omega)\\ u\neq0}}\frac{Q_{N,s,\Omega}(u)}{\|u\|^2_{L^{2^*_s}(\Omega)}},
\end{equation}
where $H^s_0(\Omega)$ is the completion of $C^{\infty}_c(\Omega)$ with respect to the $H^s(\Omega)$-norm,  $2^*_s:=\frac{2N}{N-2s}$ is the so-called fractional critical Sobolev exponent and 
$Q_{N,s,\Omega}(\cdot)$ is a nonnegative quadratic form defined on $H^s_0(\Omega)$ by
\begin{equation*}
Q_{N,s,\Omega}(u):=\frac{c_{N,s}}{2}\int_{\Omega}\int_{\Omega}\frac{(u(x)-u(y))^2}{|x-y|^{N+2s}}\ dxdy.
\end{equation*} 
We notice that for $s\in(0,1/2]$ and $\O$ bounded, the constant function $1$ belongs to $H^s_0(\Omega)$, and thus, the above Sobolev constant is zero in this case. We refer the reader  to   Appendix \ref{appendix} below for more details and the definition of Lipschitz domains in this paper. 

We recall that nonnegative minimizers of the constant $S_{N,s}(\Omega)$ are weak solutions to nonlinear Dirichlet problem 
\begin{equation}\label{critical-problem-for-regional-0}
\left\{\begin{aligned}
(-\Delta)^s_{\Omega}u&=u^{2^*_s-1}\quad\text{in}\quad\Omega\\
u&=0\quad\quad\quad\text{on}\quad\partial\Omega,
\end{aligned}
\right.
\end{equation} 
where $(-\Delta)^s_{\Omega}$ is the \textit{regional fractional Laplacian} defined as
\begin{equation*}
(-\Delta)^s_{\Omega}u(x)=c_{N,s}P.V.\int_{\Omega}\frac{u(x)-u(y)}{|x-y|^{N+2s}}\ dy,~~x\in\Omega.
\end{equation*}
Here, $c_{N,s}$ is the usual positive normalization constant of $\Ds$ and $P.V.$ stands for the principal value of the integral. 

In the theory of partial differential equations, the existence of solutions of nonlinear equations appears as a natural question. This strongly depends on the type of nonlinearities that are considered. For instance, nonlinear equations involving subcritical power nonlinearities, say $f(t)=|t|^{p-1}$ with $p<2^*_s$, are quite well-understood and due to compactness, the existence of solutions can be easily established by using for example the Mountain Pass theorem. One can also study the corresponding minimization problem and prove that a minimizer exists. Besides, at the critical exponent $p=2^*_s$ we lose compactness and therefore standard argument of calculus of variation cannot be applied to derive the existence of solutions. As a typical example, when $\Omega$ is a star-shaped bounded  domain, it has been proved that the Dirichlet problem
\begin{equation}\label{e}
(-\Delta)^su=u^{2^*_s-1},\quad\quad u>0\quad\text{in}~~\Omega,\ \ \ \ \ u=0\quad\text{in}~~\R^N\setminus\Omega
\end{equation}
does not admit a  solution. Such a nonexistrence result was first proved in \cite{ fall2012nonexistence} and later in  \cite{ros2014pohozaev,ros2017pohozaev}  by means of   a fractional  Pohozaev type identity. However, \eqref{critical-problem-for-regional-0} can have a solution even if $\O$ is star-shaped and smooth.  It is therefore interesting to understand the type of domains and exponents for which \eqref{critical-problem-for-regional-0} does not admit a   solution.

In the case where $\O=\R^N$ or $\O=\R^N_+$, the infinimum  $S_{N,s}(\O)>0$ for all $s\in (0,1)$. Moreover,  see  e.g. \cite{lieb2002sharp} all minimizers of   $S_{N,s}(\R^N)$      are of the form 
\begin{equation}
u(x)=a\Big(\frac{1}{b^2+|x-x_0|^2}\Big)^{\frac{N-2s}{2}},\quad\quad x\in\R^N
\end{equation}
where $a, b$ are positive constants and $x_0\in\R^N$.

Problem of type \eqref{critical-problem-for-regional-0} is less understood in contrast with \eqref{e}. The only paper investigating it is \cite{frank2018minimizers}. Precisely, the authors in \cite{frank2018minimizers} considered the equivalent minimization problem and obtain existence of minimizers under some assumptions on $\O$ and the range of the parameter $s$. In particular,  it is proved in \cite{frank2018minimizers}  that if  a portion of  $\partial\O$ lies on a hyperplane and $N\geq 4s$,  then $S_{N,s}(\Omega)$ is achieved.\\ 
Our first main result removes this assumption on $\O$ provided $s$ is close to $1/2$.
\begin{thm}\label{existence-result-for-s-close-to-1/2-0}
	Let $N\geq2$ and $\Omega\subset\R^N$ be a bounded $C^1$ open set. Then there exists $s_0\in(1/2,1)$ such that for all $s\in (1/2,s_0)$,  the infimum  $S_{N,s}(\Omega)$  is achieved.
\end{thm}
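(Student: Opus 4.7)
The strategy I would follow is the classical concentration-compactness approach for critical Sobolev problems. The key algebraic tool is the decomposition
\[
Q_{N,s,\R^N}(\tilde u) \;=\; Q_{N,s,\Omega}(u) + c_{N,s}\int_\Omega u(x)^2\kappa_\Omega(x)\,dx, \qquad \kappa_\Omega(x):=\int_{\R^N\setminus\Omega}|x-y|^{-N-2s}\,dy,
\]
valid for $u\in H^s_0(\Omega)$ with $\tilde u$ its extension by zero to $\R^N$; since $\kappa_\Omega\geq 0$, this immediately yields the a priori bound $S_{N,s}(\Omega)\leq S_{N,s}(\R^N)$.

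First I would prove a concentration-compactness dichotomy for $S_{N,s}(\Omega)$: any minimizing sequence $(u_n)\subset H^s_0(\Omega)$ normalized by $\|u_n\|_{L^{2^*_s}(\Omega)}=1$ either admits (after extraction) a nonzero weak limit $u_\infty$, which is then a minimizer, or the measures $|u_n|^{2^*_s}\,dx$ concentrate at a single point $x_0\in\overline\Omega$. This is obtained by combining a Brezis-Lieb splitting for the regional seminorm with a fractional analog of Lions' second concentration-compactness principle. A blow-up analysis at the concentration point---using the $C^1$ regularity of $\partial\Omega$ to straighten it locally when $x_0\in\partial\Omega$---identifies the limit Rayleigh quotient with $S_{N,s}(\R^N)$ (interior case) or with the analogous half-space regional constant $S_{N,s}(\R^N_+)$ (boundary case). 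Existence of a minimizer then reduces to the strict inequality $S_{N,s}(\Omega) < \min\{S_{N,s}(\R^N),\,S_{N,s}(\R^N_+)\}$.

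To produce this strict inequality I would test with the Aubin--Talenti extremizer $U(y)=(1+|y|^2)^{-(N-2s)/2}$ of $S_{N,s}(\R^N)$, rescaled and localized at an interior point $x_0\in\Omega$: set $u_\epsilon(x):=\eta(x)\,\epsilon^{-(N-2s)/2}U((x-x_0)/\epsilon)$ with $\eta\in C^\infty_c(\Omega)$ equal to $1$ near $x_0$. The decomposition identity combined with standard Talenti-type asymptotics produces
\[
\frac{Q_{N,s,\Omega}(u_\epsilon)}{\|u_\epsilon\|_{L^{2^*_s}(\Omega)}^2} \;\leq\; S_{N,s}(\R^N) + O(\epsilon^{N-2s}) - c_{N,s}\,\kappa_\Omega(x_0)\,\frac{\|u_\epsilon\|_{L^2(\Omega)}^2}{\|u_\epsilon\|_{L^{2^*_s}(\Omega)}^2}(1+o(1)).
\]
Since $\kappa_\Omega(x_0)>0$ at any interior point, the strict inequality will hold provided the last term dominates the $\epsilon^{N-2s}$ error. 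For $s$ close to $1/2$ and $N\geq 3$ one has $N>4s$, so $\|u_\epsilon\|_{L^2(\Omega)}^2 \asymp \epsilon^{2s}$ with $2s<N-2s$, securing the dominance. The parallel inequality $S_{N,s}(\Omega)<S_{N,s}(\R^N_+)$ would be obtained by an analogous construction based on the half-space extremizer combined with the $C^1$ flattening of $\partial\Omega$ at a boundary point, together with the observation that the boundary curvature contributes a favorable correction.

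The main obstacle is the borderline dimension $N=2$, where $N<4s$ for every $s>1/2$: here the gain $\|u_\epsilon\|_{L^2(\Omega)}^2 \asymp \epsilon^{N-2s}$ has the same order as the truncation error, so the strict inequality reduces to a comparison of constants analogous to the positive mass condition in the Brezis--Nirenberg problem. The hypothesis that $s$ is close to $1/2$ should enter decisively here to guarantee the required positivity, via a perturbative argument relative to the limit $s=1/2$. This is the technically most delicate part of the proof, and is what forces the existence of a threshold $s_0$ rather than existence for all $s\in(1/2,1)$.
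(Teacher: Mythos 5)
Your reduction to a strict inequality below the concentration thresholds follows the Frank--Jin--Xiong scheme that the paper also invokes, and your interior-bubble computation does give $S_{N,s}(\Omega)<S_{N,s}(\R^N)$ when $N>4s$. But the decisive threshold for the regional problem is the half-space constant $S_{N,s}(\R^N_+)$, which governs concentration at $\partial\Omega$ and satisfies $S_{N,s}(\R^N_+)\le S_{N,s}(\R^N)$; excluding interior concentration is the easy part. Your proposed proof of $S_{N,s}(\Omega)<S_{N,s}(\R^N_+)$ --- gluing a half-space extremizer at a boundary point after $C^1$ flattening and asserting that boundary curvature ``contributes a favorable correction'' --- is precisely the step that nobody knows how to carry out: the half-space extremizer is not explicit (its very existence is delicate), no Talenti-type asymptotics are available for it, a merely $C^1$ boundary has no curvature to exploit, and no sign information for such a correction is known. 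This is exactly why Frank--Jin--Xiong had to assume a flat portion of $\partial\Omega$, and why the question was open for general domains; your proposal restates the difficulty rather than resolving it.

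Relatedly, the hypothesis that $s$ be close to $1/2$ is misplaced in your argument: you invoke it only for $N=2$ through a vague positive-mass-type perturbation, whereas in the paper it is the engine of the whole proof for every $N\ge 2$. The paper proves that for a bounded Lipschitz open set the constant function $1$ lies in $H^{1/2}_0(\Omega)$ (explicit logarithmic cut-offs combined with a partition of unity, Appendix), hence $S_{N,1/2}(\Omega)=0$; together with the upper semicontinuity $\limsup_{s\searrow 1/2}S_{N,s}(\Omega)\le S_{N,1/2}(\Omega)$ this yields $S_{N,s}(\Omega)\to 0$ as $s\searrow 1/2$, while $S_{N,s}(\R^N_+)$ remains positive, so $0<S_{N,s}(\Omega)<S_{N,s}(\R^N_+)$ for all $s\in(1/2,s_0)$, after which the Frank--Jin--Xiong compactness argument applies verbatim. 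Without a mechanism of this kind, or an actual proof of the boundary strict inequality, your argument does not close.
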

The main ingredient to prove \eqref{existence-result-for-s-close-to-1/2-0} is to  show that  $S_{N,s}(\Omega)<S_{N,s}(\R^N_+)$ for $s$ closed to $1/2$. We achieve this  by showing that $S_{N,1/2}(\Omega)=0$ provided $\O$ is a bounded Lipschitz open set. We notice here that our notion of Lipschitz open set is that $\partial\O$ is locally given by the restriction of a bi-Lipschitz map. This is strictly weaker than the \textit{strongly} Lipschitz property,  meaning  that $\partial \O$ is locally given by  a graph of a Lipschitz function, see Definiton \ref{def:Lipschitz} and Remark \ref{rem:Lipschitz} below.\\

Next, let $\cB$ denote the unit centered ball in $\R^N$.  We consider the minimization problem \eqref{minimization-problem-on-domain-0} on the space $H^s_{0,rad}(\cB)$, the completion of the space of radial functions belonging to  $C^\infty_c(\cB)$ with respect to the norm  $H^s_0(\cB)$.   More precisely,   we consider the infinimum problem, for $h\in L^\infty(\cB)$ being radial,
\begin{equation}\label{minimization-problem-on-domain-0-rad-h}
S_{N,s,rad}(\cB,h)=\inf_{\substack{u\in H^s_{0,rad}(\cB)\\ u\neq0}}\frac{Q_{N,s,\cB}(u)+\int_{\cB}hu^2 dx}{\|u\|^2_{L^{2^*_s}(\cB)}}.
%
\end{equation}
Our next result is related to the existence of minimizers for the infimum $S_{N,s,rad}(\cB,0)$ in high dimension $N\geq4s$.   Our second main result is the following.
\begin{thm}\label{existence-of-radial-minimizers}
	Let $s\in(1/2,1)$ and $N\geq4s$. Then the   infinimum
 \begin{equation}\label{minimization-problem-on-doma-rad}
S_{N,s,rad}(\cB,0)=\inf_{\substack{u\in H^s_{0,rad}(\cB)\\ u\neq0}}\frac{Q_{N,s,\cB}(u) }{\|u\|^2_{L^{2^*_s}(\cB)}}
\end{equation}
  is achieved by a positive function $u\in H^s_{0,rad}(\cB)$, satisfying
  $$
\begin{aligned}
(-\Delta)^s_{\cB}u&=u^{2^*_s-1}\quad\text{in}\quad\cB,\qquad
u=0\quad \text{on}\quad\partial\cB.
\end{aligned}
$$
\end{thm}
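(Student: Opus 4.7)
The plan follows the classical Brezis--Nirenberg strategy adapted to the regional fractional setting. For any $u\in H^s_0(\cB)$ (trivially extended outside $\cB$), a direct splitting of the double integral over $\R^N\times\R^N$ gives the key identity
\[
Q_{N,s,\R^N}(u) \;=\; Q_{N,s,\cB}(u) + \int_{\cB}\kappa_{\cB}(y)\,u(y)^2\,dy,
\qquad
\kappa_{\cB}(y):=c_{N,s}\int_{\R^N\setminus\cB}\frac{dx}{|x-y|^{N+2s}},
\]
exhibiting $Q_{N,s,\cB}$ as $Q_{N,s,\R^N}$ minus a strictly positive weight $\kappa_{\cB}$ which is bounded below by a positive constant on each compact subset of $\cB$. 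The heart of the argument is to establish
\[
S_{N,s,rad}(\cB,0) < S_{N,s}(\R^N),
\]
after which a standard concentration--compactness argument forces any radial minimizing sequence to be precompact in $L^{2^*_s}(\cB)$.

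To prove the strict inequality I would take the Aubin--Talenti extremal $U(x)=(1+|x|^2)^{-(N-2s)/2}$ of $S_{N,s}(\R^N)$, its rescaling $U_\varepsilon(x)=\varepsilon^{-(N-2s)/2}U(x/\varepsilon)$, and a radial cutoff $\eta\in C^\infty_c(\cB)$ with $\eta\equiv 1$ on $B_{1/2}$. Testing with $v_\varepsilon:=\eta\, U_\varepsilon\in H^s_{0,rad}(\cB)$, the standard truncation asymptotics (analogous to those used in \cite{frank2018minimizers})
\[
Q_{N,s,\R^N}(v_\varepsilon)\le S_{N,s}(\R^N)\|U\|_{L^{2^*_s}}^2+O(\varepsilon^{N-2s}),
\qquad
\|v_\varepsilon\|_{L^{2^*_s}(\cB)}^{2^*_s}= \|U\|_{L^{2^*_s}}^{2^*_s}+O(\varepsilon^N),
\]
together with the elementary lower bound (via the change of variables $y=\varepsilon z$)
\[
\int_{\cB}\kappa_{\cB}\,v_\varepsilon^2\,dy \;\ge\; c\int_{B_{1/2}}U_\varepsilon^2\,dy \;\asymp\;
\begin{cases}\varepsilon^{2s},&N>4s,\\ \varepsilon^{2s}\log(1/\varepsilon),&N=4s,\end{cases}
\]
substituting into the above identity and dividing yields
\[
\frac{Q_{N,s,\cB}(v_\varepsilon)}{\|v_\varepsilon\|_{L^{2^*_s}(\cB)}^2} \;\le\; S_{N,s}(\R^N)-c\,\varepsilon^{2s}\bigl(1+\log(1/\varepsilon)\bigr)+O(\varepsilon^{N-2s}).
\]
The hypothesis $N\ge 4s$ is exactly what makes the gain dominate the truncation error as $\varepsilon\to 0^+$, giving the strict inequality.

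For the final compactness step, I would take a nonnegative radial minimizing sequence $\{u_n\}$ with $\|u_n\|_{L^{2^*_s}(\cB)}=1$ and a weak/a.e.\ limit $u\ge 0$ in $H^s_{0,rad}(\cB)$, and apply the critical Sobolev concentration--compactness principle, which in the regional setting produces atoms with $\mu_j\ge S_{N,s}(\R^N)\,\nu_j^{2/2^*_s}$ in the energy defect (the bound is obtained by rescaling around each putative concentration point, where the regional quadratic form limits to the free fractional one). Rotational invariance restricts the atomic part of the limiting measures to a single atom located at the origin; the strict inequality together with the Brezis--Lieb splitting of $Q_{N,s,\cB}$ then forces $\nu_0=0$, hence strong $L^{2^*_s}$-convergence. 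The limit $u$ is a nontrivial nonnegative radial minimizer; the associated Euler--Lagrange equation (after a scalar normalization) is exactly the stated PDE, and a strong maximum principle / Harnack inequality for $(-\Delta)^s_\cB$ upgrades $u$ to strictly positive in $\cB$. The delicate point throughout is the test-function estimate: matching the precise order of $\int \kappa_\cB v_\varepsilon^2$ against the truncation error of the Talenti bubble, which is exactly why the borderline $N=4s$ (at which $U^2$ barely fails to be integrable at infinity) marks the validity threshold of the theorem.
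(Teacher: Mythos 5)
Your proposal is correct in substance, but it follows a partly different route from the paper, so let me compare. For the key strict inequality $S_{N,s,rad}(\cB,0)<S_{N,s}(\R^N)$ the paper does not argue at all: it imports it from \cite[Proposition 7]{frank2018minimizers} (Proposition \ref{key-prop-1}), whereas you reprove it directly with truncated Aubin--Talenti bubbles, gaining the term $\int_\cB \kappa_\cB v_\varepsilon^2\gtrsim \varepsilon^{2s}$ (resp.\ $\varepsilon^{2s}\log(1/\varepsilon)$ at $N=4s$) against the cut-off error $O(\varepsilon^{N-2s})$; your asymptotics and the resulting threshold $N\ge 4s$ are exactly right, and this makes the argument self-contained where the paper relies on a citation. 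For compactness, the paper uses the missing-mass scheme: it introduces $S^*_{N,s,rad}(\cB)$ and proves $S^*_{N,s,rad}(\cB)\ge S_{N,s}(\R^N)$ (Proposition \ref{key-prop-2}) via the quantitative bound $Q_{N,s,\cB}(u)\ge S_{N,s}(\R^N)\|u\|^2_{L^{2^*_s}(\cB)}-C\|u\|^2_{L^2(\cB)}$ of Proposition \ref{key-prop-3}, whose two ingredients are a cut-off near the origin where $\kappa_\cB$ is bounded (Lemma \ref{l1}) and, crucially, the Strauss radial lemma plus the fractional Hardy inequality to rule out concentration away from the origin, including at $\partial\cB$ (Lemma \ref{l2}); it then concludes with the Brezis--Lieb splitting as in Theorem \ref{main-result-for-radial-minimization-probem}. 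You replace this by Lions' concentration--compactness plus rotational invariance. That can be made rigorous, but two points in your sketch need care. First, your blanket claim that every atom obeys $\mu_j\ge S_{N,s}(\R^N)\nu_j^{2/2^*_s}$ ``because the regional form limits to the free one'' is false for atoms on $\partial\cB$, where the local model is the half-space and the constant degrades to $S_{N,s}(\R^N_+)<S_{N,s}(\R^N)$; your argument survives only because radial invariance (and the purely atomic structure of the defect measure) confines any atom to the origin, where $\kappa_\cB$ is bounded and the regional form dominates the full one up to an $L^2$ remainder --- this is precisely the content of Lemmas \ref{l1}--\ref{l2}, so the logical order should be: localize atoms at $0$ first, then invoke the constant $S_{N,s}(\R^N)$ there. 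Second, to apply the full-space concentration--compactness machinery to the zero-extended sequence you should note that for $s>1/2$ one has $H^s_0(\cB)=\cH^s_0(\cB)$ and, by the fractional Hardy inequality, $Q_{N,s,\R^N}(u)\le C\,Q_{N,s,\cB}(u)$, so radial minimizing sequences are indeed bounded in $\dot H^s(\R^N)$. With these points made explicit, your route is a valid alternative: it trades the paper's elementary but quantitative radial lemma for the measure-theoretic framework, at the price of having to track where the constant $S_{N,s}(\R^N)$ versus $S_{N,s}(\R^N_+)$ enters; the nonnegativity, Euler--Lagrange equation and positivity steps you outline agree with the paper.
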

We now turn our attention to the minimization problem $S_{N,s,rad}(\cB,h)$ in low dimension $N<4s$. This Sobolev constant is related to the Schr\"{o}dinger operator $(-\Delta)^s_{\cB}+h$. As a necessary condition for the existence of positive minimizers, it is important to assume that $(-\Delta)^s_{\cB}+h$ defines a coercive bilinear form on $H^s_{0,rad}(\cB)$.

Before stated our third main result, we need to introduce the mass of $\cB$ at $0$ associated to the Schr\"{o}dinger operator $(-\Delta)^s+h$, where $(-\Delta)^s$ is the standard fractional Laplacian. Indeed, let $G(x,y)$ be the Green function of the operator $\Ds+h$ on $\cB$ and $\calR$ be the Riesz potential of $\Ds$ on $\R^N$. Then the function  $x\mapsto \k(x)=G(x,0)-\calR(x)$ is continuous in $\cB$. The \textit{mass} of the operator $\Ds+h$ at 0 is given by $\k(0)$.  Our next result is a "positive mass theorem" in the spirit of \cite{ghoussoub2017hardy,schoen1984conformal}.
\begin{thm}\label{existence-of-radial-minimizers-ld}
	Let $s\in(1/2,1)$, $2\leq N<4s$, $h\in L^\infty_{rad}(\cB)$ and suppose that $S_{N,s,rad}(\cB,h)>0$. Assume that $\k(0)>0$.
	 Then  $S_{N,s,rad}(\cB,h)$  is achieved  by a positive function $u\in H^s_{0,rad}(\cB)$, satisfying
  $$
 \begin{aligned}
(-\Delta)^s_{\cB}u+ h u&=u^{2^*_s-1}\quad\text{in}\quad\cB,\qquad
u=0\quad \text{on}\quad\partial\cB.
\end{aligned}
$$
\end{thm}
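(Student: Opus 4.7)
The plan follows the classical Schoen strategy for positive-mass theorems: I will establish the strict inequality
$$
S_{N,s,rad}(\cB,h)<S_{N,s}(\R^N)
$$
via a test function built from the Aubin--Talenti bubble and the regular part $\k$ of the Green function, and then recover existence by a concentration-compactness argument in the radial class. The positivity of the minimizer and its Euler--Lagrange equation follow by standard post-processing once a minimizer is produced.

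For the compactness step, let $\{u_n\}\subset H^s_{0,rad}(\cB)$ be a minimizing sequence with $\|u_n\|_{L^{2^*_s}(\cB)}=1$. Since $S_{N,s,rad}(\cB,h)>0$, the quadratic form $u\mapsto Q_{N,s,\cB}(u)+\int_\cB h u^2\,dx$ is coercive on $H^s_{0,rad}(\cB)$, so $u_n\rightharpoonup u$ along a subsequence. Compactness of radial embeddings away from the origin restricts any loss of mass to $x=0$, and a fractional concentration-compactness analysis forces the concentrated profile to be a rescaled minimizer of $S_{N,s}(\R^N)$. A Brezis--Lieb splitting, combined with the strict concavity $(1-\theta)^{2/2^*_s}+\theta^{2/2^*_s}>1$ for $\theta\in(0,1)$, then shows that the assumption $S_{N,s,rad}(\cB,h)<S_{N,s}(\R^N)$ forces the concentrated mass $\theta$ to vanish. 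Hence $u_n\to u$ strongly in $L^{2^*_s}(\cB)$ and $u\not\equiv 0$ is a minimizer; replacing $u$ by $|u|$, absorbing the Lagrange multiplier by rescaling, and invoking a strong maximum principle for $(-\Delta)^s_\cB+h$ produces the strictly positive radial solution.

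To establish the strict inequality, let $U(x)=\alpha_0(1+|x|^2)^{-(N-2s)/2}$ be a radial minimizer of $S_{N,s}(\R^N)$ with $\Ds U=U^{2^*_s-1}$ on $\R^N$, and set $U_\varepsilon(x)=\varepsilon^{-(N-2s)/2}U(x/\varepsilon)$. Fix a radial cutoff $\eta\in C^\infty_c(\cB)$ with $\eta\equiv 1$ in a neighborhood of $0$. The goal is to design a radial test function $v_\varepsilon\in H^s_{0,rad}(\cB)$ that coincides with $U_\varepsilon$ near the origin and whose tail is matched to $\beta\,\varepsilon^{(N-2s)/2}G(\cdot,0)$ on the transition region, the mismatch between this tail and the pure bubble being absorbed by the regular part $\k=G(\cdot,0)-\calR$. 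Using $\Ds U_\varepsilon=U_\varepsilon^{2^*_s-1}$ and $(\Ds+h)G(\cdot,0)=\delta_0$ in $\cB$, integration-by-parts manipulations should yield an expansion of the form
$$
\frac{Q_{N,s,\cB}(v_\varepsilon)+\int_\cB h v_\varepsilon^2\,dx}{\|v_\varepsilon\|_{L^{2^*_s}(\cB)}^2}=S_{N,s}(\R^N)-C\,\k(0)\,\varepsilon^{N-2s}+o(\varepsilon^{N-2s})
$$
for some $C>0$. The low-dimensional restriction $N<4s$ (equivalently, $N-2s<2s$) is exactly what ensures that this $\varepsilon^{N-2s}$ term strictly dominates the cutoff-induced remainder and the remainder coming from the regional/standard Laplacian mismatch, so the assumption $\k(0)>0$ delivers the strict inequality for $\varepsilon$ small.

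The principal technical obstacle is the discrepancy between the PDE operator $(-\Delta)^s_\cB$ and the operator $\Ds+h$ that defines the mass. The excess term
$$
Q_{N,s,\R^N}(v_\varepsilon)-Q_{N,s,\cB}(v_\varepsilon)=c_{N,s}\int_\cB v_\varepsilon(x)^2\int_{\R^N\setminus\cB}\frac{dy}{|x-y|^{N+2s}}\,dx
$$
must be expanded past the mass order $\varepsilon^{N-2s}$, using the pointwise decay of $v_\varepsilon$ away from $0$ and refined boundary estimates for $\k$. Verifying that this ``nonlocal leakage'' contributes only at lower order, or else modifies the mass coefficient in a controlled way, is the most delicate part of the plan; once settled, the remainder of the Rayleigh-quotient expansion reduces to standard fractional bubble/Green-function calculations.
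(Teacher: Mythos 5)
Your overall strategy is the paper's: the strict inequality $S_{N,s,rad}(\cB,h)<S_{N,s}(\R^N)$ is obtained from a Schoen-type test function $v_\e=\eta u_\e+\e^{\frac{N-2s}{2}}a_s\,\k$ (cutoff bubble plus the regular part of the Green function of $\Ds+h$), with $N<4s$ guaranteeing that the cutoff errors are $o(\e^{N-2s})$, and existence then follows from a Brezis--Lieb/missing-mass argument in the radial class under $0<S_{N,s,rad}(\cB,h)<S_{N,s}(\R^N)$. However, the step you single out as ``the most delicate part'' is miscalibrated. If you try to show that the excess $Q_{N,s,\R^N}(v_\e)-Q_{N,s,\cB}(v_\e)=\int_\cB \kappa_\cB\, v_\e^2\,dx$ is of lower order than $\e^{N-2s}$, you will fail: away from the origin $v_\e\approx \e^{\frac{N-2s}{2}}a_s G$, and since $\kappa_\cB(x)\simeq \delta_\cB(x)^{-2s}$ while $G(x)\simeq \delta_\cB(x)^{s}$ near $\partial\cB$, this term is genuinely of order $\e^{N-2s}$. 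The point is that no expansion is needed at all: the mass is defined through the Green function of $\Ds+h$ with exterior Dirichlet data, $v_\e$ vanishes outside $\cB$, and therefore $Q_{N,s,\cB}(v_\e)\le Q_{N,s,\R^N}(v_\e)$, so the killing term has a favorable sign and is simply dropped; the whole computation then takes place for the operator $\Ds+h$, exactly as in the paper. Your fallback phrase (``modifies the mass coefficient in a controlled way'') is the correct branch, but as written the plan risks chasing an estimate that is false.

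In the compactness step, the claims that concentration can only occur at the origin and that it costs $S_{N,s}(\R^N)$ must be quantified with respect to the regional form $Q_{N,s,\cB}$, not the full-space form, since $Q_{N,s,\cB}$ is strictly smaller. The paper encodes this in the inequality $Q_{N,s,\cB}(u)\ge S_{N,s}(\R^N)\|u\|^2_{L^{2^*_s}(\cB)}-C\|u\|^2_{L^2(\cB)}$ for all $u\in H^s_{0,rad}(\cB)$, proved by a partition of unity: near the origin the killing measure is bounded, while on annuli the Strauss radial bound is converted into a bound by $Q_{N,s,\cB}$ using the fractional Hardy inequality and the identification $H^s_0(\cB)=\cH^s_0(\cB)$, which is precisely where $s>1/2$ enters. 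Your sketch presupposes this lemma (``compactness of radial embeddings away from the origin'') without indicating how the regional form controls the relevant norms; once this inequality is supplied, your missing-mass argument, the passage to a nonnegative minimizer, and the Euler--Lagrange equation go through as in the paper.
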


The role of the mass in proving the existence of minimizers (for Sobolev constant) in low dimensions is very crucial. As we will see later, it helps us to restore the compactness. Indeed, the strict positivity $\k(0)>0$ implies that the Sobolev constant in $\cB$ is strictly less than that of $\R^N$, and thereby produces the existence of  minimizers.

An interesting question that arises is  whether symmetry breaking occurs? More generally, for $p\geq 1$,  is every positive solution to  $u\in H^s_0(\cB)$ to 
$$
\begin{aligned}
(-\Delta)^s_{\cB}u&=u^{p}\quad\text{in}\quad\cB, \qquad u=0  \quad\text{on}\quad\partial\cB,
\end{aligned}
$$
is radial? We conjecture that that the answer to this question is no.\\

In Proposition \ref{boundedness-in-critical-problem} we obtain a priori $L^{\infty}$-bounds of minimizers. Hence, by the ineterior regularity theory and standard  boostrap arguments, they belong to $C^\infty(\O)$, provided $h\in C^\infty(\O)$. In addition, the  boundary regularity result in  \cite{chen2018dirichlet,fall2020regional} implies that minimizers are actually $C^{2s-1}(\ov\Omega)$. \\
 
The rest of the paper is organized as follows. in Section \ref{section:preliminary} we give some preliminaries that will be useful throughout this paper. In Section \ref{section:the critical problem for s close to 1/2} we prove Theorems \ref{existence-result-for-s-close-to-1/2-0}  whereas in Section \ref{section:radial minimizers} we establish Theorems \ref{existence-of-radial-minimizers} and \ref{existence-of-radial-minimizers-ld}. Finally in the Appendix \ref{appendix} we prove that the constant function $1$ belongs to $H^s_0(\Omega)$ for $s\in(0,1/2]$. \\

\textbf{Acknowledgements:} Support from DAAD and BMBF (Germany) within project 57385104 is acknowledged. The first author is also supported by the Alexander von Humboldt Foundation. The authors would like also to thank Tobias Weth and Sven Jarohs for useful discussions.

\section{preliminary}\label{section:preliminary}
In this section, we introduce some preliminary properties which will be useful in this work. For all $s\in(0,1)$, the fractional Sobolev space $H^s(\Omega)$ is defined as the set of all measurable functions $u$ such that
\begin{equation*}
[u]^2_{H^s(\Omega)}:=\frac{c_{N,s}}{2}\int_{\Omega}\int_{\Omega}\frac{(u(x)-u(y))^2}{|x-y|^{N+2s}}\ dxdy
\end{equation*}
is finite. It is a Hilbert space endowed with the norm
\begin{equation*}
\|u\|^2_{H^s(\Omega)}=\|u\|^2_{L^2(\Omega)}+[u]^2_{H^s(\Omega)}.
\end{equation*}
We refer to \cite{di2012hitchhiker's} for more details on this fractional Sobolev spaces. Next, we denote by $H^s_0(\Omega)$ the completion of $C^{\infty}_c(\Omega)$ under the norm $\|\cdot\|_{H^s(\Omega)}$. Moreover, for $s\in(1/2,1)$, $H^s_0(\Omega)$ is a Hilbert space equipped with the norm
\begin{equation*}
\|u\|^2_{H^s_0(\Omega)}=\frac{c_{N,s}}{2}\int_{\Omega}\int_{\Omega}\frac{(u(x)-u(y))^2}{|x-y|^{N+2s}}\ dxdy
\end{equation*}
 which is equivalent to the usual one in $H^s(\Omega)$ thanks to Poincar\'{e} inequality. We define the Hilbert space
\begin{equation*}
\cH^s_0(\Omega)=\{u\in H^s(\R^N):u=0~\text{in}~\R^N\setminus\Omega\}
\end{equation*}
endowed with the norm $\|\cdot\|_{H^s(\R^N)}$, which is the completion of $C^{\infty}_c(\Omega)$ with respect to the norm $\|\cdot\|_{H^s(\R^N)}$. In the sequel, $H^s_{0,rad}(\Omega)$ and $\cH^s_{0,rad}(\Omega)$ are respectively the space of radially symmetric functions of $H^s_0(\Omega)$ and $\cH^s_0(\Omega)$.  

Given $x\in\Omega$ and $r>0$, we denote by $B_r(x)$ the open ball centered at $x$ with radius $r$. When the center is not specified, we will understand that it's the origin, e.g. $B_2(0)=B_2$. The upper half-ball centered at $x$ with radius $r$ is denoted by $B^+_r(x)$. We will always use $\delta_{\Omega}(x)=\dist(x,\partial\Omega)$ for the distance from $x$ to the boundary. For every set $A\subset\R^N$, we denote by  $\mathbbm{1}_{A}$ its characteristic function.
\begin{prop}[see \cite{di2012hitchhiker's,del2015first}]\label{sobolev-embedding}
	The embedding $H^s_0(\Omega)\hookrightarrow L^p(\Omega)$ is continuous for any $p\in[2,2^*_s]$, and compact for any $p\in[2,2^*_s)$.
\end{prop}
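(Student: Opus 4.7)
The plan is to reduce the statement to the classical fractional Sobolev inequality on $\R^N$ combined with the Riesz-Fr\'echet-Kolmogorov compactness criterion, via a bounded extension operator $E \colon H^s(\Omega) \to H^s(\R^N)$, which is available since $\Omega$ is Lipschitz.

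For the continuous embedding, I would first handle the endpoint $p = 2^*_s$: given $u \in H^s_0(\Omega)$, estimate
\[
\|u\|_{L^{2^*_s}(\Omega)} \leq \|Eu\|_{L^{2^*_s}(\R^N)} \leq C\,[Eu]_{H^s(\R^N)} \leq C'\,\|u\|_{H^s(\Omega)},
\]
the middle step being the fractional Gagliardo-Nirenberg-Sobolev inequality on $\R^N$ (a consequence of Hardy-Littlewood-Sobolev applied to the Riesz-potential representation $u = I_s(-\Delta)^{s/2}u$, or Plancherel combined with interpolation). The case $p=2$ is tautological, and intermediate $p \in (2, 2^*_s)$ follows by H\"older on the bounded set $\Omega$. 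In the paper's range $s \in (1/2,1)$, one can actually bypass $E$ by using plain zero-extension: for $u \in C^\infty_c(\Omega)$,
\[
[\tilde u]^2_{H^s(\R^N)} - [u]^2_{H^s(\Omega)} \leq C \int_\Omega u(x)^2 \delta_\Omega(x)^{-2s}\,dx \leq C'\,[u]^2_{H^s(\Omega)}
\]
by the fractional Hardy inequality, valid on bounded Lipschitz domains precisely when $s > 1/2$.

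For compactness at $p \in [2, 2^*_s)$, take a bounded sequence $\{u_n\} \subset H^s_0(\Omega)$. The extensions $\{Eu_n\}$, multiplied by a fixed compactly supported cutoff equal to $1$ on $\Omega$, form a bounded family in $H^s(\R^N)$ supported in a common compact set. The Riesz-Fr\'echet-Kolmogorov criterion then yields an $L^2$-convergent subsequence: uniform smallness of the translation differences $\|\tau_h v - v\|_{L^2}$ as $h \to 0$ for $v$ in an $H^s$-bounded family follows from Plancherel together with $|e^{i\xi\cdot h} - 1|^2 \leq \min\{4,|\xi\cdot h|^2\}$. Interpolating this $L^2$ convergence against the uniform $L^{2^*_s}$ bound from the first part produces convergence in $L^p(\Omega)$ for every $p \in [2, 2^*_s)$.

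The main technical hurdle is the construction of $E$ (or, along the second route, the fractional Hardy inequality) under the weak Lipschitz hypothesis used in the paper, where $\partial \Omega$ is patched by bi-Lipschitz maps rather than Lipschitz graphs; this requires local straightening charts and a partition of unity, together with even/odd reflection after flattening. The failure of compactness at the critical exponent $p = 2^*_s$ is genuine, and is precisely the phenomenon driving the paper's subsequent existence and positive-mass analysis.
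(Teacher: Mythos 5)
The paper offers no proof of this proposition at all---it delegates it to the two references cited in its statement---and your argument is exactly the standard proof found there: a bounded extension operator for Lipschitz domains, the fractional Sobolev inequality on $\R^N$ for the critical exponent, H\"older on the bounded set $\Omega$ for intermediate $p$, the Riesz--Fr\'echet--Kolmogorov criterion (via Plancherel) for compactness in $L^2$, and interpolation against the uniform $L^{2^*_s}$ bound to reach every subcritical $p$. Your additional remarks are also sound: the zero-extension plus fractional Hardy shortcut for $s>1/2$ is consistent with the killing-measure identity and the Hardy inequality the paper itself invokes later (Lemmas \ref{l1}--\ref{l2}), and you correctly identify that the only genuine work under the paper's weaker bi-Lipschitz-chart notion of Lipschitz boundary is the construction of the extension operator (or of the Hardy inequality) through local straightening and a partition of unity.
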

The next proposition gives an elementary result regarding the role of convex functions applied to $(-\Delta)^s_{\Omega}$. 
\begin{prop}\label{convex-lemma}
	Assume that $\phi:\R\rightarrow\R$ is a Lipschitz convex function such that $\phi(0)=0$. Then if $u\in H^s_0(\Omega)$ we have
	\begin{equation}
	(-\Delta)^s_{\Omega}\phi(u)\leq\phi'(u)(-\Delta)^s_{\Omega}u\quad\text{weakly in}\quad\Omega.
	\end{equation}
\end{prop}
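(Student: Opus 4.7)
The plan is to reduce the operator inequality to a pointwise algebraic convexity estimate and then integrate against the regional kernel, testing against a nonnegative bump. The key algebraic step is the following: for every convex $\phi:\R\to\R$, every $a,b\in\R$, and every $\alpha,\beta\ge 0$,
\begin{equation*}
(\phi(a)-\phi(b))(\alpha-\beta)\le (a-b)\bigl(\phi'(a)\alpha-\phi'(b)\beta\bigr),
\end{equation*}
where $\phi'$ denotes any measurable selection of the subdifferential (e.g.\ the right derivative, which is bounded by the Lipschitz constant $L$ of $\phi$). To verify this, split the left-hand side as $\alpha(\phi(a)-\phi(b))+\beta(\phi(b)-\phi(a))$ and apply the two tangent-line bounds $\phi(a)-\phi(b)\le \phi'(a)(a-b)$ and $\phi(b)-\phi(a)\le -\phi'(b)(a-b)$, both immediate consequences of convexity, together with $\alpha,\beta\ge 0$.

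With this in hand, fix a nonnegative test function $\psi\in C^\infty_c(\Omega)$ and insert $a=u(x)$, $b=u(y)$, $\alpha=\psi(x)$, $\beta=\psi(y)$. Dividing by $|x-y|^{N+2s}$, multiplying by $c_{N,s}/2$, and integrating over $\Omega\times\Omega$ yields
\begin{equation*}
\frac{c_{N,s}}{2}\int_\Omega\!\int_\Omega\!\frac{(\phi(u(x))-\phi(u(y)))(\psi(x)-\psi(y))}{|x-y|^{N+2s}}\,dxdy \le \frac{c_{N,s}}{2}\int_\Omega\!\int_\Omega\!\frac{(u(x)-u(y))\bigl(\phi'(u(x))\psi(x)-\phi'(u(y))\psi(y)\bigr)}{|x-y|^{N+2s}}\,dxdy,
\end{equation*}
which is precisely $\langle(-\Delta)^s_\Omega\phi(u),\psi\rangle\le \langle\phi'(u)(-\Delta)^s_\Omega u,\psi\rangle$, the claimed weak inequality.

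The remaining and main obstacle is to ensure that both sides are well defined and in particular that $\phi'(u)\psi$ qualifies as an admissible test function. The membership $\phi(u)\in H^s_0(\Omega)$ follows from the Lipschitz bound $|\phi(u(x))-\phi(u(y))|\le L|u(x)-u(y)|$ together with $\phi(0)=0$, so the left-hand side is a bona fide pairing. Since $\phi'$ is only a monotone, a.e.-defined selection, the cleanest route for the right-hand side is to approximate: mollify $\phi$ to obtain convex $\phi_n\in C^\infty(\R)$ sharing the same Lipschitz constant $L$, with $\phi_n(0)\to 0$, $\phi_n\to\phi$ uniformly on compact sets, and $\phi_n'\to\phi'$ pointwise a.e. For each $n$, the Lipschitz regularity of $\phi_n'$ ensures $\phi_n'(u)\psi\in H^s_0(\Omega)$, so the previous two paragraphs deliver the inequality with $\phi$ replaced by $\phi_n$. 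A dominated-convergence argument, using $|\phi_n(u(x))-\phi_n(u(y))|\le L|u(x)-u(y)|$ on the left and the splitting $\phi_n'(u(x))\psi(x)-\phi_n'(u(y))\psi(y)=\phi_n'(u(x))(\psi(x)-\psi(y))+(\phi_n'(u(x))-\phi_n'(u(y)))\psi(y)$ together with the monotonicity sign $(u(x)-u(y))(\phi_n'(u(x))-\phi_n'(u(y)))\ge 0$ on the right, then recovers the inequality for $\phi$.
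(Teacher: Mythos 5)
Your proposal is correct and follows essentially the same route as the paper: the paper's (very brief) proof rests on exactly the tangent-line inequality $\phi(a)-\phi(b)\leq\phi'(a)(a-b)$, which is the two-point estimate you weight by $\psi(x),\psi(y)\geq 0$ and integrate against the regional kernel. Your write-up simply makes the paper's one-line sketch rigorous (admissibility of $\phi'(u)\psi$ via mollification and the sign of the monotone term), so no further comparison is needed.
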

\begin{proof}
	The proof of the above lemma is standard. In fact, using that every convex $\phi$ satisfies $\phi(a)-\phi(b)\leq\phi'(a)(a-b)$ for all $a,b\in\R$, the proof follows.
\end{proof}
 
We conclude this section showing in proposition below, the boundedness of any nonnegative solution of \eqref{critical-problem-for-regional-0}. The argument uses Moser's iteration method. A similar result has been established in \cite{barrios2015critical} for the case of fractional Laplacian.
\begin{prop}\label{boundedness-in-critical-problem}
	Let $u\in H^s_0(\Omega)$ be a nonnegative solution to problem \eqref{critical-problem-for-regional-0}. Then $u\in L^{\infty}(\Omega)$.
\end{prop}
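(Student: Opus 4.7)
The plan is a Moser iteration adapted to the critical exponent in the spirit of Brezis--Kato and the argument in \cite{barrios2015critical}. For $\beta \geq 1$ and $L > 0$ set $u_L := \min(u,L)$, and consider the truncated power test function $\phi_L := u\, u_L^{2(\beta-1)} \in H^s_0(\Omega) \cap L^\infty(\Omega)$ (valid because $u_L$ is bounded and multiplication by a bounded function preserves $H^s_0$). Testing the weak formulation of $(-\Delta)^s_\Omega u = u^{2^*_s-1}$ against $\phi_L$ gives
\begin{equation*}
\frac{c_{N,s}}{2}\int_\Omega\int_\Omega \frac{(u(x)-u(y))(\phi_L(x)-\phi_L(y))}{|x-y|^{N+2s}}\,dx\,dy \;=\; \int_\Omega u^{2^*_s}\, u_L^{2(\beta-1)}\,dx.
\end{equation*}
The key algebraic step is the pointwise inequality
\begin{equation*}
(u(x)-u(y))(\phi_L(x)-\phi_L(y)) \;\geq\; \frac{1}{\beta}\bigl(u(x)u_L(x)^{\beta-1} - u(y)u_L(y)^{\beta-1}\bigr)^2,
\end{equation*}
proved by a case analysis on whether $u(x), u(y)$ exceed $L$ (essentially the same check as in \cite{barrios2015critical}). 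Writing $w_L := u\, u_L^{\beta-1}$, this yields $\tfrac{1}{\beta}[w_L]_{H^s(\Omega)}^2 \leq C\int_\Omega u^{2^*_s} u_L^{2(\beta-1)}\,dx$.

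Next I would handle the right-hand side, which is the delicate point for the critical exponent. Split $u^{2^*_s}u_L^{2(\beta-1)} = u^{2^*_s-2}\, w_L^2$, then for $K > 0$ decompose over $\{u \leq K\} \cup \{u > K\}$:
\begin{equation*}
\int_\Omega u^{2^*_s-2}w_L^2\,dx \;\leq\; K^{2^*_s-2}\!\int_\Omega w_L^2\,dx \;+\; \Bigl(\int_{\{u>K\}}\!\! u^{2^*_s}\,dx\Bigr)^{\!(2^*_s-2)/2^*_s}\!\|w_L\|_{L^{2^*_s}(\Omega)}^2.
\end{equation*}
Since $u \in L^{2^*_s}(\Omega)$ by Proposition \ref{sobolev-embedding}, the first factor of the second term is arbitrarily small for $K = K_0$ large. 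Combining with Sobolev's inequality $\|w_L\|_{L^{2^*_s}}^2 \leq S_{N,s}(\Omega)^{-1}[w_L]_{H^s(\Omega)}^2$ on the left, we can absorb that term and obtain
\begin{equation*}
\|w_L\|_{L^{2^*_s}(\Omega)}^2 \;\leq\; C\,\beta\, K_0^{2^*_s-2} \|w_L\|_{L^2(\Omega)}^2,
\end{equation*}
with $C$ independent of $L$ and $\beta$. Letting $L \to \infty$ by monotone convergence/Fatou, we conclude
\begin{equation*}
\bigl\|u^\beta\bigr\|_{L^{2^*_s}(\Omega)}^2 \;\leq\; C\,\beta\, K_0^{2^*_s-2}\, \bigl\|u^\beta\bigr\|_{L^2(\Omega)}^2,
\end{equation*}
valid whenever $u^\beta \in L^2(\Omega)$.

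The final step is to iterate. Starting from $u \in L^{2^*_s}(\Omega)$, set $\beta_0 = 1$ and $\beta_{k+1} = \beta_k \cdot 2^*_s/2$, so that the hypothesis $u^{\beta_k} \in L^2(\Omega)$ is verified at each step by the outcome $u^{\beta_{k-1}} \in L^{2^*_s}(\Omega)$ of the previous step. This produces $u \in L^{p_k}(\Omega)$ with $p_k = 2^*_s \beta_k \to \infty$. Tracking the constants (they grow only polynomially in $\beta_k$ while the exponent grows geometrically), a standard argument of the Moser type (e.g.\ taking $p_k$-th roots and passing to the limit) yields the uniform bound $\|u\|_{L^\infty(\Omega)} \leq C$.

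The main obstacle is the critical exponent itself: the Hölder splitting trick is exactly what lets us get the iteration started, because a naive test function would merely reproduce $\int u^{2^*_s}$ on both sides with no gain. Apart from that, the only technical point is verifying the pointwise quadratic-form inequality and checking that $\phi_L \in H^s_0(\Omega)$, both of which are routine. The same scheme should also work verbatim in the presence of a bounded potential $h$, which is used later in the radial low-dimension problem.
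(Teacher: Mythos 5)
Your overall strategy (truncated-power test functions plus a H\"older splitting to start the critical iteration) is sound, and the pointwise inequality $(u(x)-u(y))(\phi_L(x)-\phi_L(y))\geq \tfrac1\beta\bigl(w_L(x)-w_L(y)\bigr)^2$ is correct (it follows from $F_L'\geq \tfrac1\beta (G_L')^2$ and Cauchy--Schwarz). But there is a genuine gap at the iteration stage. After Sobolev, the coercive term you keep on the left is $\tfrac{S}{\beta}\|w_L\|_{L^{2^*_s}}^2$, while the term you want to absorb on the right is $C\bigl(\int_{\{u>K\}}u^{2^*_s}dx\bigr)^{(2^*_s-2)/2^*_s}\|w_L\|_{L^{2^*_s}}^2$. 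Absorption therefore forces $\bigl(\int_{\{u>K_0\}}u^{2^*_s}dx\bigr)^{(2^*_s-2)/2^*_s}\leq c\,S/\beta$, i.e.\ $K_0=K_0(\beta)$ must grow with $\beta$, and since all you know a priori is $u\in L^{2^*_s}(\Omega)$, the rate at which the tail decays — hence the size of $K_0(\beta_k)$ — is completely uncontrolled. So the displayed bound $\|u^\beta\|_{L^{2^*_s}}^2\leq C\beta K_0^{2^*_s-2}\|u^\beta\|_{L^2}^2$ is not "with $C$ independent of $\beta$" in any usable sense, and the final claim that the constants "grow only polynomially in $\beta_k$" is unjustified: taking $2\beta_k$-th roots, the product $\prod_k\bigl(C\beta_k K_0(\beta_k)^{2^*_s-2}\bigr)^{1/(2\beta_k)}$ need not converge. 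As written, your scheme proves $u\in L^p(\Omega)$ for every finite $p$ but not the uniform $L^\infty$ bound.

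The repair is exactly the two-phase structure of the paper's proof: use the tail-smallness/absorption argument only once, at the fixed exponent $\beta_1=(2^*_s+1)/2$, to upgrade $u\in L^{2^*_s}$ to $u\in L^{2^*_s\beta_1}$; for larger $\beta$ drop the splitting entirely and estimate crudely $\int_\Omega u^{2^*_s-2}w^2\,dx\leq\int_\Omega u^{2\beta+2^*_s-2}\,dx$, iterating along the exponents $2\beta_{m+1}+2^*_s-2=2^*_s\beta_m$ so that the right-hand side at step $m+1$ is finite by step $m$; the resulting constants $(C\beta_m)^{1/(2(\beta_m-1))}$ have a convergent product since $\beta_m$ grows geometrically. (Alternatively, after the first step you can keep your splitting but H\"older against the improved norm $\|u\|_{L^{2^*_s\beta_1}}$ and interpolate, which makes the absorption cost polynomial in $\beta$.) Two minor points: your justification that $\phi_L\in H^s_0(\Omega)$ should not be "multiplication by a bounded function preserves $H^s_0$" (false in general), but rather that $\phi_L=F_L(u)$ with $F_L(t)=t\min(t,L)^{2(\beta-1)}$ Lipschitz and $F_L(0)=0$; and note that \cite{barrios2015critical}, like the paper, actually runs the iteration through the convex truncation $\phi_{T,\beta}$ and the inequality $(-\Delta)^s_\Omega\phi(u)\leq\phi'(u)(-\Delta)^s_\Omega u$ of Proposition \ref{convex-lemma}, rather than through the $u\,u_L^{2(\beta-1)}$ test function you use.
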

\begin{proof}
	For $\beta\geq1$ and $T>0$ large, we define the following convex function
	\begin{equation*}
	\phi_{T,\beta}(t)=\left\{\begin{aligned}
	&0,\quad\quad\quad\quad\quad\quad\quad\quad\quad\text{if}\quad t\leq0\\
	&t^{\beta},\quad\quad\quad\quad\quad\quad\quad\quad~~\text{if}\quad 0<t<T\\
	&\beta T^{\beta-1}(t-T)+T^{\beta},\quad\text{if}\quad t\geq T.
	\end{aligned}
	\right.
	\end{equation*}
	Throughout the proof, we will use $\phi_{T,\beta}=:\phi$ for the sake of simplicity. Since $\phi$ is Lipschitz, with constant $\Lambda_{\phi}=\beta T^{\beta-1}$, and $\phi(0)=0$, then $\phi(u)\in H^s_0(\Omega)$ and by the convexity of $\phi$, we have, according to Proposition \ref{convex-lemma} that
	\begin{equation}\label{3}
	(-\Delta)^s_{\Omega}\phi(u)\leq\phi'(u)(-\Delta)^s_{\Omega}u.
	\end{equation}
	By Proposition \ref{sobolev-embedding} and inequality \eqref{3} we have that
	\begin{align*}
	\|\phi(u)\|^2_{L^{2^*_s}(\Omega)}&\leq C\|\phi(u)\|^2_{H^s_0(\Omega)}=C\int_{\Omega}\phi(u)(-\Delta)^s_{\Omega}\phi(u)\ dx\\
	&\leq C\int_{\Omega}\phi(u)\phi'(u)(-\Delta)^s_{\Omega}u\ dx\\
	&=C\int_{\Omega}\phi(u)\phi'(u)u^{2^*_s-1}\ dx.
	\end{align*}
	Moreover, since $u\phi'(u)\leq\beta\phi(u)$, we have that
	\begin{align}\label{4}
	\|\phi(u)\|^2_{L^{2^*_s}(\Omega)}\leq C\beta\int_{\Omega}(\phi(u))^2u^{2^*_s-2}\ dx.
	\end{align}
	We point out that the integral on the right-hand side of the above inequality is finite. Indeed, using that $\beta\geq1$ and $\phi(u)$ is linear when $u\geq T$, we have from a quick computation that
	\begin{align*}
	\int_{\Omega}(\phi(u))^2u^{2^*_s-2}\ dx&=\int_{\{u\leq T\}}(\phi(u))^2u^{2^*_s-2}\ dx+\int_{\{u>T\}}(\phi(u))^2u^{2^*_s-2}\ dx\\&\leq T^{2\beta-2}\int_{\Omega}u^{2^*_s}\ dx+C\int_{\Omega}u^{2^*_s}\ dx<\infty.
	\end{align*}
	 We now choose $\beta$ in \eqref{4} so that $2\beta-1 = 2^*_s$. Denoting by $\beta_1$ such a value, then we can equivalently write
	\begin{equation}\label{6}
	\beta_1:=\frac{2^*_s+1}{2}.
	\end{equation}
	Let $K>0$ be a positive number whose value will be fixed later on. Then applying H$\ddot{\text{o}}$lder's inequality with exponents $q:=2^*_s/2$ and $q':=2^*_s/(2^*_s-2)$ in the integral on the right-hand side of inequality \eqref{4}, we find that
	\begin{align}\label{6'}
\nonumber	&\int_{\Omega}(\phi(u))^2u^{2^*_s-2}\ dx=\int_{\{u\leq K\}}(\phi(u))^2u^{2^*_s-2}\ dx+\int_{\{u> K\}}(\phi(u))^2u^{2^*_s-2}\ dx\\&\leq\int_{\{u\leq K\}}\frac{(\phi(u))^2}{u}K^{2^*_s-1}\ dx+\Bigg(\int_{\Omega}(\phi(u))^{2^*_s}\ dx\Bigg)^{2/2^*_s}\Bigg(\int_{\{u>K\}}u^{2^*_s}\ dx\Bigg)^{\frac{2^*_s-2}{2^*_s}}.
	\end{align}
	Now, thanks to Monotone Convergence Theorem, we can choose $K$ as big as we wish so that
	\begin{equation}\label{7}
	\Bigg(\int_{\{u>K\}}u^{2^*_s}\ dx\Bigg)^{\frac{2^*_s-2}{2^*_s}}\leq\frac{1}{2C\beta_1},
	\end{equation}
	where $C$ is the positive constant appearing in \eqref{4}. Therefore, by taking into account \eqref{7} in \eqref{6'} and by using also \eqref{6}, we deduce from \eqref{4} that
	\begin{equation*}
	\|\phi(u)\|^2_{L^{2^*_s}(\Omega)}\leq2C\beta_1\Bigg(K^{2^*_s-1}\int_{\Omega}\frac{(\phi(u))^2}{u}\ dx\Bigg).
	\end{equation*}
	 Since $\phi(u)\leq u^{\beta_1}$ and recalling \eqref{6}, and by letting $T\rightarrow\infty$, we get that
	\begin{equation*}
	\Bigg(\int_{\Omega}u^{2^*_s\beta_1}\ dx\Bigg)^{2/2^*_s}\leq2C\beta_1\Bigg(K^{2^*_s-1}\int_{\Omega}u^{2^*_s}\ dx\Bigg)<\infty,
	\end{equation*}
	and therefore
	\begin{equation}\label{8}
	u\in L^{2^*_s\beta_1}(\Omega).
	\end{equation}
	Suppose now that $\beta>\beta_1$. Thus, using that $\phi(u)\leq u^{\beta}$ in the right
	hand side of \eqref{4} and letting $T\rightarrow\infty$ we get
	\begin{equation}\label{9}
	\Bigg(\int_{\Omega}u^{2^*_s\beta}\ dx\Bigg)^{2/2^*_s}\leq C\beta\Bigg(\int_{\Omega}u^{2\beta+2^*_s-2}\ dx\Bigg).
	\end{equation}
	Therefore,
	\begin{equation}\label{10}
	\Bigg(\int_{\Omega}u^{2^*_s\beta}\ dx\Bigg)^{\frac{1}{2^*_s(\beta-1)}}\leq(C\beta)^{\frac{1}{2(\beta-1)}}\Bigg(\int_{\Omega}u^{2\beta+2^*_s-2}\ dx\Bigg)^{\frac{1}{2(\beta-1)}}.
	\end{equation}
	 We are now in position to use an iterative argument as in \cite[Proposition 2.2]{barrios2015critical}. For that, we define inductively the sequence $\beta_{m+1},~m\geq1$ by
	\begin{equation*}
	2\beta_{m+1}+2^*_s-2=2^*_s\beta_m,
	\end{equation*}
	from which we deduce that,
	\begin{equation*}
	\beta_{m+1}-1=\Big(\frac{2^*_s}{2}\Big)^m(\beta_1-1).
	\end{equation*}
	Now by using $\beta_{m+1}$ in place of $\beta$, in \eqref{10}, it follows that
	\begin{align*}
	\Bigg(\int_{\Omega}u^{2^*_s\beta_{m+1}}\ dx\Bigg)^{\frac{1}{2^*_s(\beta_{m+1}-1)}}\leq(C\beta_{m+1})^{\frac{1}{2(\beta_{m+1}-1)}}\Bigg(\int_{\Omega}u^{2^*_s\beta_m}\ dx\Bigg)^{\frac{1}{2^*_s(\beta_m-1)}}.
	\end{align*}
	For the sake of clarity, we set
	\begin{equation*}
	C_{m+1}:=(C\beta_{m+1})^{\frac{1}{2(\beta_{m+1}-1)}}\quad\text{and}\quad A_m:=\Bigg(\int_{\Omega}u^{2^*_s\beta_m}\ dx\Bigg)^{\frac{1}{2^*_s(\beta_m-1)}}
	\end{equation*}
	so that
	\begin{equation}
	A_{m+1}\leq C_{m+1}A_m,~~m\geq1.
	\end{equation}
	Then iterating the above inequality, we find that
	\begin{equation*}
	A_{m+1}\leq\prod^{m+1}_{i=2}C_iA_1,
	\end{equation*}
	which implies that
	\begin{align*}
	\log A_{m+1}&\leq\sum_{i=2}^{m+1}\log C_i+\log A_1\\
	&\leq\sum_{i=2}^{\infty}\log C_i+\log A_1.
	\end{align*}
	Since $\beta_{m+1}=(\beta_1-1/2)^m(\beta_1-1)+1$ then the serie $\sum_{i=2}^{\infty}\log C_i$ converges. Also, since $u\in L^{2^*_s\beta_1}(\Omega)$ (see \eqref{8}), then $A_1\leq C$. From this, we find that
	\begin{equation}
		\log A_{m+1}\leq C_0
	\end{equation}
	with being $C_0>0$ a positive constant independent of $m$. By letting $m\rightarrow\infty$, it follows that
	\begin{equation*}
	\|u\|_{L^{\infty}(\Omega)}\leq C_0'<\infty.
	\end{equation*}
 This completes the proof.
\end{proof}

\section{Existence of minimizers for $s$ close to $1/2$}\label{section:the critical problem for s close to 1/2}
 We aim to study the existence of nontrivial solutions of \eqref{critical-problem-for-regional-0}. As pointed point out in the introduction the embedding $H^s_0(\Omega)\hookrightarrow L^{2^*_s}(\Omega)$ fails to be compact and due to this, the functional energy associated to \eqref{critical-problem-for-regional-0} does not satisfy the Palais-Smale compactness condition. Hence finding the critical points by standard variational methods become a very tough task.  Therefore, a natural question arises:
 
  \[\textit{\textbf{$(\cQ)$} Does problem \eqref{critical-problem-for-regional-0} admits a nontrivial solution?}\]\\
  In other words, we are looking at whether the quantity
\begin{equation}\label{minimization-problem-on-domain}
S_{N,s}(\Omega)=\inf_{\substack{u\in H^s_0(\Omega)\\ u\neq0}}\frac{Q_{N,s,\Omega}(u)}{\|u\|^2_{L^{2^*_s}(\Omega)}}
\end{equation}
is attained or not. Here $Q_{N,s,\Omega}(\cdot)$ is a nonnegative quadratic form define on $H^s_0(\Omega)$ by
\begin{equation*}
Q_{N,s,\Omega}(u):=\frac{c_{N,s}}{2}\int_{\Omega}\int_{\Omega}\frac{(u(x)-u(y))^2}{|x-y|^{N+2s}}\ dxdy.
\end{equation*}
As a quick comment on the above question, Frank et al. \cite[Theorem 4]{frank2018minimizers} gave a positive answer in the special case of a class of $C^1$ open sets whose boundary has a flat part, that is $C^1$ domains $\Omega$ with the shape $B^+_r(z)\subset\Omega\subset\R^N_+$ for some $r>0$ and $z\in\partial\R^N_+$, and such that $\R^N_+\setminus\Omega$ has nonempty interior. This flatness assumption on the boundary of $\Omega$ allows the authors in \cite{frank2018minimizers} to obtain the strict inequality $S_{N,s}(\Omega)<S_{N,s}(\R^N_+)$, which is the crucial ingredient for the proof of Theorem 4 in there. Notice that in \cite{frank2018minimizers}, the question remains open for a larger class of sets.

In the sequel, we give a positive affirmation to the above question in the case of arbitrary open sets with $C^1$ boundary, provided that $s$ is close to $1/2$. As a consequence, one has in contrast with the fractional Laplacian that   the above question has a positive answer even if $\O$ is convex and of class $C^\infty$.\\

 For the reader's convenience, we restate our main result in the following.
\begin{thm}\label{existence-result-for-s-close-to-1/2}
	Let $N\geq2$ and $\Omega\subset\R^N$ be a bounded Lipschitz open set. There exists $s_0\in(1/2,1)$ such that for all $s\in(1/2,s_0)$, any minimizing sequence for $S_{N,s}(\Omega)$, normalized in $H^s_0(\Omega)$ is relatively compact in $H^s_0(\Omega)$. In particular, the infimum is achieved.
\end{thm}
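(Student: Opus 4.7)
The plan is to combine a standard Brezis--Lieb/concentration-compactness analysis of a normalized minimizing sequence with the strict inequality $S_{N,s}(\Omega)<S_{N,s}(\R^N_+)$ for $s$ close to $1/2$, which rules out the two ways a minimizing sequence can fail to be compact: mass escape into an interior bubble on $\R^N$ (threshold $S_{N,s}(\R^N)$) or into a boundary bubble on $\R^N_+$ (threshold $S_{N,s}(\R^N_+)$, the smaller of the two).

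\textbf{Step 1: concentration-compactness modulo a threshold.} Let $(u_n)\subset H^s_0(\Omega)$ be a minimizing sequence with $Q_{N,s,\Omega}(u_n)=1$ and $\|u_n\|_{L^{2^*_s}(\Omega)}^{2}\to S_{N,s}(\Omega)^{-1}$. Passing to a subsequence, $u_n\rightharpoonup u$ in $H^s_0(\Omega)$, $u_n\to u$ almost everywhere and in $L^p(\Omega)$ for every $p<2^*_s$ by Proposition \ref{sobolev-embedding}. The Brezis--Lieb identity, applied to both the quadratic form (whose cross term is linear in $u_n-u\rightharpoonup 0$) and to the $L^{2^*_s}$-norm, yields
\[
 1=Q_{N,s,\Omega}(u)+Q_{N,s,\Omega}(u_n-u)+o(1), \qquad
 \|u_n\|_{L^{2^*_s}}^{2^*_s}=\|u\|_{L^{2^*_s}}^{2^*_s}+\|u_n-u\|_{L^{2^*_s}}^{2^*_s}+o(1).
\]
A Lions-type dichotomy then leaves two options: either $u_n\to u$ strongly (so $u$ is a minimizer), or the residual $u_n-u$ concentrates at a single point $x_0\in\overline{\Omega}$. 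After translation and rescaling, an interior concentration produces a minimizer for the problem on $\R^N$, giving $S_{N,s}(\Omega)\geq S_{N,s}(\R^N)$; a boundary concentration, after a bi-Lipschitz flattening of a neighborhood of $x_0\in\partial\Omega$ and blow-up, produces a minimizer for the regional problem on $\R^N_+$, giving $S_{N,s}(\Omega)\geq S_{N,s}(\R^N_+)$. Since $S_{N,s}(\R^N_+)\leq S_{N,s}(\R^N)$, the strict inequality above excludes both alternatives and forces strong convergence of the entire normalized sequence.

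\textbf{Step 2: the strict inequality for $s$ near $1/2$.} I show that $\lim_{s\to 1/2^+}S_{N,s}(\Omega)=0$ while $\liminf_{s\to 1/2^+}S_{N,s}(\R^N_+)>0$. The Appendix gives $1\in H^{1/2}_0(\Omega)$; since $Q_{N,1/2,\Omega}(1)=0$ and $\|1\|_{L^{2^*_{1/2}}(\Omega)}=|\Omega|^{(N-1)/(2N)}>0$, this already yields $S_{N,1/2}(\Omega)=0$. For $\varepsilon>0$, approximate $1$ in the $H^{1/2}(\Omega)$-norm by $\phi\in C^\infty_c(\Omega)$ so that $Q_{N,1/2,\Omega}(\phi)<\varepsilon\,\|\phi\|_{L^{2^*_{1/2}}(\Omega)}^{2}$. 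Because $\phi$ is smooth and compactly supported, the integrand $(\phi(x)-\phi(y))^2|x-y|^{-N-2s}$ is dominated uniformly for $s$ in a neighborhood of $1/2$ by an integrable majorant, so by dominated convergence and continuity of $c_{N,s}$ in $s$,
\[
 Q_{N,s,\Omega}(\phi)\longrightarrow Q_{N,1/2,\Omega}(\phi), \qquad \|\phi\|_{L^{2^*_s}(\Omega)}\longrightarrow\|\phi\|_{L^{2^*_{1/2}}(\Omega)} \quad \text{as } s\searrow 1/2.
\]
Using $\phi$ as a test function in the definition of $S_{N,s}(\Omega)$ gives $\limsup_{s\to 1/2^+}S_{N,s}(\Omega)\leq\varepsilon$, and letting $\varepsilon\to 0$ proves the claim. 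Meanwhile, for $s>1/2$ any $u\in H^s_0(\R^N_+)$ extends by zero to $H^s(\R^N)$ with the same Gagliardo seminorm, so the fractional Sobolev inequality on $\R^N$ combined with continuity of its sharp constant in $s$ implies $\liminf_{s\to 1/2^+}S_{N,s}(\R^N_+)>0$. Combining the two yields $s_0>1/2$ such that $S_{N,s}(\Omega)<S_{N,s}(\R^N_+)$ for all $s\in(1/2,s_0)$, and Step 1 concludes the proof.

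\textbf{Main obstacle.} The technically delicate part is the boundary half of Step 1: setting up a Lions-type profile decomposition tailored to the regional fractional Laplacian that tracks how the bi-Lipschitz straightening of $\partial\Omega$ enters the nonlocal kernel $|x-y|^{-N-2s}$ and identifies any concentrating ``bubble'' at a point of $\partial\Omega$ with a minimizer of the half-space problem, as well as the matching Brezis--Lieb identity for $Q_{N,s,\Omega}(u_n-u)$ under only weak convergence. Once that nonlocal concentration-compactness machinery is set up, the existence assertion is driven entirely by the Appendix input $1\in H^{1/2}_0(\Omega)$, which is the unique ingredient specific to the regional operator at $s$ close to $1/2$.
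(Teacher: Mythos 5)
Your overall strategy coincides with the paper's: you prove $S_{N,s}(\Omega)\to 0$ as $s\searrow 1/2$ by combining the Appendix fact $\mathbbm{1}_{\Omega}\in H^{1/2}_0(\Omega)$ (hence $S_{N,1/2}(\Omega)=0$) with continuity in $s$ of the Rayleigh quotient of a fixed compactly supported test function — this is exactly the content of Lemma \ref{continuous-lemma} together with Proposition \ref{an-interresting-approximation-result-for-the-half-critical-value} — then deduce $0<S_{N,s}(\Omega)<S_{N,s}(\R^N_+)$ for $s$ close to $1/2$ and invoke the compactness mechanism of Frank--Jin--Xiong. Your Step 1 is only a sketch of that mechanism, but the paper itself cites \cite{frank2018minimizers} rather than reproving it, so you are on the same footing there.

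There is, however, one genuine error, in your justification that $\liminf_{s\searrow 1/2}S_{N,s}(\R^N_+)>0$. It is not true that $u\in H^s_0(\R^N_+)$ "extends by zero to $H^s(\R^N)$ with the same Gagliardo seminorm": the seminorm of the zero extension $Eu$ contains in addition the killing term $c_{N,s}\int_{\R^N_+}u(x)^2\int_{\R^N\setminus\R^N_+}|x-y|^{-N-2s}\,dy\,dx$, so one only has $Q_{N,s,\R^N_+}(u)\leq [Eu]^2_{H^s(\R^N)}$, and the whole-space Sobolev inequality then controls $\|u\|^2_{L^{2^*_s}}$ by the \emph{larger} quantity, not by the regional form; the inequality points the wrong way for your conclusion. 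Recovering the regional form from the extended seminorm requires the regional Hardy inequality on the half-space, whose constant degenerates precisely as $s\downarrow 1/2$ — this degeneration is the very reason $S_{N,s}(\Omega)=0$ for $s\leq 1/2$ on bounded sets — so this route cannot give a bound uniform in $s$. The positivity of $S_{N,s}(\R^N_+)$ must instead be taken from the regional Sobolev inequality on half-spaces, i.e. \cite[Lemma 2.1]{dyda2011fractional} as the paper does, and for the conclusion "there exists $s_0$ such that $S_{N,s}(\Omega)<S_{N,s}(\R^N_+)$ on $(1/2,s_0)$" one additionally needs that this half-space constant stays bounded away from zero as $s\downarrow 1/2$; the paper leaves this uniformity implicit, but your extension argument does not supply it, so this step of your proposal needs to be replaced.
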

The proof of the above main theorem is a direct consequence of the key proposition below (see Proposition \ref{asymptotic-behavior-of-crritical-level-as-s-tends-to-1/2}), in which we examine the asymptotic behavior of the Sobolev critical constant $S_{N,s}(\Omega)$ as $s$ tends to $1/2^{+}$, by showing that the latter goes to zero. The proof of this only requires the domain to be Lipschitz. 
Our key proposition is stated as follows.
\begin{prop}\label{asymptotic-behavior-of-crritical-level-as-s-tends-to-1/2}
Let $\Omega\subset\R^N$ be a bounded Lipschitz open set. Then 
	\begin{equation}\label{limit-of-the-critical-value}
	\lim\limits_{s\searrow1/2}S_{N,s}(\Omega)=0.
	\end{equation}
\end{prop}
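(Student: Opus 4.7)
The plan relies crucially on the appendix result that, for a bounded Lipschitz open set $\Omega$, the constant function $1$ belongs to $H^{1/2}_0(\Omega)$. Consequently there exists a sequence $(\phi_n)_{n\geq 1}\subset C_c^{\infty}(\Omega)$ with $\phi_n\to 1$ in the $H^{1/2}(\Omega)$ norm, so that $[\phi_n]_{H^{1/2}(\Omega)}\to [1]_{H^{1/2}(\Omega)}=0$ and $\phi_n\to 1$ in $L^2(\Omega)$; by H\"older's inequality the latter forces $\liminf_{n\to\infty}\|\phi_n\|_{L^{2N/(N-1)}(\Omega)}>0$.

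Since $C_c^\infty(\Omega)\subset H^s_0(\Omega)$ for every $s\in(0,1)$, each $\phi_n$ is admissible in \eqref{minimization-problem-on-domain}, giving
$$S_{N,s}(\Omega)\leq \frac{Q_{N,s,\Omega}(\phi_n)}{\|\phi_n\|^2_{L^{2^*_s}(\Omega)}}\qquad\text{for every } s\in(1/2,1),\ n\geq 1.$$
The second step is to pass to the limit $s\searrow 1/2$ with $n$ fixed. The denominator converges to $\|\phi_n\|_{L^{2N/(N-1)}(\Omega)}^2$ by continuity of $p\mapsto\|\phi_n\|_{L^p(\Omega)}$ (recall $\phi_n$ is bounded with compact support). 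For the numerator I apply dominated convergence: using the elementary bound $(\phi_n(x)-\phi_n(y))^2\leq \min\bigl(4\|\phi_n\|_\infty^2,\,\|\nabla\phi_n\|_\infty^2|x-y|^2\bigr)$, the integrand is uniformly majorized for $s\in[1/2,3/4]$ by the $s$-independent function $C_n\min(1,|x-y|^2)|x-y|^{-N-3/2}$, which is integrable on $\Omega\times\Omega$ since $\Omega$ is bounded and the diagonal singularity is of order $|x-y|^{1/2-N}$. Together with the standard continuity of $s\mapsto c_{N,s}$ at $s=1/2$, this yields $\lim_{s\searrow 1/2}Q_{N,s,\Omega}(\phi_n)=[\phi_n]^2_{H^{1/2}(\Omega)}$.

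Combining these two ingredients,
$$\limsup_{s\searrow 1/2} S_{N,s}(\Omega)\leq \frac{[\phi_n]^2_{H^{1/2}(\Omega)}}{\|\phi_n\|^2_{L^{2N/(N-1)}(\Omega)}}\qquad\text{for every } n\geq 1.$$
Sending $n\to\infty$, the right-hand side vanishes because the numerator tends to $0$ while the denominator stays bounded below by a positive constant. Together with $S_{N,s}(\Omega)\geq 0$ this establishes \eqref{limit-of-the-critical-value}.

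The main obstacle, deferred to the appendix, is actually proving that $1\in H^{1/2}_0(\Omega)$ for bounded Lipschitz $\Omega$ in the sense defined here; this is the heart of the argument and is precisely the point where the paper's weaker notion of Lipschitz domain enters. Once that is in hand, the rest is the routine dominated-convergence passage in $s$ described above, combined with the continuity of $c_{N,s}$ and of $p\mapsto \|\phi_n\|_{L^p(\Omega)}$ near $s=1/2$.
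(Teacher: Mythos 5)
Your proposal is correct and takes essentially the paper's route: the paper combines the appendix fact $S_{N,1/2}(\Omega)=0$ (Proposition \ref{an-interresting-approximation-result-for-the-half-critical-value}, i.e.\ the approximation of $\mathbbm{1}_\Omega$ in $H^{1/2}$) with the upper-semicontinuity Lemma \ref{continuous-lemma}, which is likewise proved by freezing a near-optimal test function and letting $s\searrow 1/2$; you merely replace the paper's explicit exponential/logarithm estimates by a dominated-convergence passage (and, unlike the paper, you also handle the $s$-dependence of the exponent $2^*_s$ in the denominator explicitly). The only small slip is that your majorant $C_n\min(1,|x-y|^2)|x-y|^{-N-3/2}$ is not an upper bound for $|x-y|>1$ (there $|x-y|^{-N-2s}>|x-y|^{-N-3/2}$), but since $\Omega$ is bounded this is repaired by enlarging $C_n$ by a factor depending on $\diam(\Omega)$, so the argument stands.
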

We now collect some interesting results that are needed to complete the proof of Proposition \ref{asymptotic-behavior-of-crritical-level-as-s-tends-to-1/2} above.  Let us start with the following upper semicontinuous lemma.
\begin{lemma}\label{continuous-lemma}
	Let $\Omega\subset\R^N$ be a bounded Lipschitz open set. Fix $s_0\in[1/2,1)$. Then
	\begin{equation}
	\limsup_{s\searrow s_0}S_{N,s}(\Omega)\leq S_{N,s_0}(\Omega).
	\end{equation}
\end{lemma}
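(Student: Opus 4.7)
The plan is to use fixed smooth compactly supported test functions and pass to the limit. Given $u\in C^\infty_c(\Omega)\setminus\{0\}$, the function $u$ is admissible in the variational problem defining $S_{N,s}(\Omega)$ for every $s\in(0,1)$, so
\[
S_{N,s}(\Omega)\leq \frac{Q_{N,s,\Omega}(u)}{\|u\|^2_{L^{2^*_s}(\Omega)}}.
\]
I will show the right hand side converges to the corresponding $s_0$-quotient as $s\searrow s_0$, after which the lemma will follow by taking the infimum over $u$ and invoking density of $C^\infty_c(\Omega)$ in $H^{s_0}_0(\Omega)$.

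For the convergence of the Rayleigh quotient at a fixed $u$, I would fix $\eta\in(0,1-s_0)$ and argue by dominated convergence. In the double integral defining $Q_{N,s,\Omega}(u)$, the integrand $\frac{(u(x)-u(y))^2}{|x-y|^{N+2s}}$ converges pointwise as $s\searrow s_0$, and for $s\in[s_0,s_0+\eta]$ I would dominate it as follows: on $\{|x-y|\leq 1\}$ use $|u(x)-u(y)|\leq\|\nabla u\|_\infty|x-y|$ to bound the integrand by $\|\nabla u\|_\infty^2|x-y|^{2-N-2(s_0+\eta)}$, which is integrable over the bounded set $\Omega\times\Omega$ because $s_0+\eta<1$; on $\{|x-y|>1\}$ use $(u(x)-u(y))^2\leq 4\|u\|^2_\infty$ together with $|x-y|^{-N-2s}\leq |x-y|^{-N-2s_0}$, which is integrable since $\Omega$ is bounded. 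Combined with the continuity of the normalization constant $c_{N,s}$ in $s$ on $(0,1)$, this yields $Q_{N,s,\Omega}(u)\to Q_{N,s_0,\Omega}(u)$. For the denominator, $|u|^{2^*_s}$ is uniformly bounded by $1+\|u\|_\infty^{2^*_{s_0}+1}$ on $\Omega$ for $s$ close to $s_0$, and $2^*_s$ depends continuously on $s$, so a second dominated convergence argument gives $\|u\|_{L^{2^*_s}(\Omega)}\to\|u\|_{L^{2^*_{s_0}}(\Omega)}$.

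Combining the two convergences, for every $u\in C^\infty_c(\Omega)\setminus\{0\}$ one gets
\[
\limsup_{s\searrow s_0}S_{N,s}(\Omega)\leq \frac{Q_{N,s_0,\Omega}(u)}{\|u\|^2_{L^{2^*_{s_0}}(\Omega)}}.
\]
Since $C^\infty_c(\Omega)$ is dense in $H^{s_0}_0(\Omega)$ by the very definition of the latter, and since the $s_0$-Rayleigh quotient is continuous on $H^{s_0}_0(\Omega)\setminus\{0\}$ by Proposition \ref{sobolev-embedding}, taking the infimum of the right hand side over $u\in C^\infty_c(\Omega)\setminus\{0\}$ recovers $S_{N,s_0}(\Omega)$, finishing the proof. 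The only technical point is the uniform integrability of the Gagliardo integrand near the diagonal; the condition $s_0+\eta<1$ is precisely what makes $|x-y|^{2-N-2s}$ uniformly integrable on a bounded set, and this room is available since $s_0<1$.
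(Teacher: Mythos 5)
Your proof is correct, and it rests on the same basic mechanism as the paper's: a fixed function in $C^\infty_c(\Omega)$ is admissible in the variational problem for every $s$, the quotient at such a fixed function is continuous in $s$ from the right, and density of $C^\infty_c(\Omega)$ in $H^{s_0}_0(\Omega)$ converts this into the upper semicontinuity of $S_{N,s}(\Omega)$. The implementation, however, is genuinely different. The paper fixes an $\epsilon$-near-minimizer $u_\epsilon$ of the $s_0$-quotient and estimates $|Q_{N,s,\Omega}(u_\epsilon)-Q_{N,s_0,\Omega}(u_\epsilon)|$ quantitatively, writing $|x-y|^{2(s_0-s)}-1=e^{2(s_0-s)\log|x-y|}-1$ and using the elementary bounds $|e^t-1|\leq |t|e^{|t|}$ together with the logarithmic growth estimates and the continuity of $c_{N,s}$, which yields an explicit rate of order $|s-s_0|$; you instead run a soft dominated-convergence argument with a dominating function uniform for $s\in[s_0,s_0+\eta]$, $\eta<1-s_0$, and then take the infimum over all of $C^\infty_c(\Omega)\setminus\{0\}$, identifying that infimum with $S_{N,s_0}(\Omega)$ via density and the continuity of the $s_0$-quotient (Proposition \ref{sobolev-embedding}). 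A point in favor of your version is that it treats explicitly the $s$-dependence of the critical exponent in the denominator, proving $\|u\|_{L^{2^*_s}(\Omega)}\to\|u\|_{L^{2^*_{s_0}}(\Omega)}$, whereas the paper normalizes $\|u_\epsilon\|_{L^{2^*_s}(\Omega)}=1$ for a fixed $u_\epsilon$ even though $2^*_s$ varies with $s$, leaving that step implicit; the paper's version, in exchange, is quantitative, which is more than the lemma requires. Both arguments use only the boundedness of $\Omega$ (through $\diam(\Omega)$ in the paper, through integrability of your dominating functions), not the Lipschitz regularity of the boundary.
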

\begin{proof}
	For $t\in\R$, we recall the elementary inequality
	\begin{equation}\label{elementary-identity-1}
	|e^t-1|\leq\sum_{k=1}^{+\infty}\frac{|t|^k}{k!}\leq\sum_{k=1}^{+\infty}\frac{|t|^k}{(k-1)!}\leq|t|e^{|t|}.
	\end{equation}
	For all $r,\gamma>0$, we also recall the following growth regarding the logarithmic function:
	\begin{equation}\label{logarithmic-decays}
	|\log|z||\leq\frac{1}{e\gamma}|z|^{-\gamma}~~\text{if}~~ |z|\leq r~~~~\text{and}~~~~|\log|z||\leq\frac{1}{e\gamma}|z|^{\gamma}~~ \text{if}~~|z|\geq r.
	\end{equation}
	Let $\epsilon>0$ and let $u_{\epsilon}\in C^{\infty}_c(\Omega)$ such that $\|u_{\epsilon}\|_{L^{2^*_s}(\Omega)}=1$ and $Q_{N,s_0,\Omega}(u_{\epsilon})\leq S_{N,s_0}(\Omega)+\epsilon$. Then $S_{N,s}(\Omega)\leq Q_{N,s,\Omega}(u_{\epsilon})$. From this, we obtain that
	\begin{align}\label{difference-of-critical-values}
	S_{N,s}(\Omega)-S_{N,s_0}(\Omega)\leq  Q_{N,s,\Omega}(u_{\epsilon})- Q_{N,s_0,\Omega}(u_{\epsilon})+\epsilon.
	\end{align}
	On the other hand,
	\begin{align*}
	&|Q_{N,s,\Omega}(u_{\epsilon})- Q_{N,s_0,\Omega}(u_{\epsilon})|\\
	&\leq\frac{1}{2}|c_{N,s}-c_{N,s_0}|\int_{\Omega}\int_{\Omega}\frac{(u_{\epsilon}(x)-u_{\epsilon}(y))^2}{|x-y|^{N+2s_0}}\ dxdy\\
	&~~~~~~~~~~~~+\frac{c_{N,s}}{2}\int_{\Omega}\int_{\Omega}\frac{(u_{\epsilon}(x)-u_{\epsilon}(y))^2}{|x-y|^{N+2s_0}}||x-y|^{2(s_0-s)}-1|\ dxdy\\
	&\leq\frac{1}{c_{N,s_0}}(S_{N,s_0}(\Omega)+\epsilon)|c_{N,s}-c_{N,s_0}|\\
	&~~~~~~~~~~~~+\frac{c_{N,s}}{2}\int_{\Omega}\int_{\Omega}\frac{(u_{\epsilon}(x)-u_{\epsilon}(y))^2}{|x-y|^{N+2s_0}}||x-y|^{2(s_0-s)}-1|\ dxdy.
	\end{align*}
	Next, from \eqref{elementary-identity-1} we have that
	\begin{align*}
	||x-y|^{2(s_0-s)}-1|=|e^{2(s_0-s)\log|x-y|}-1|&\leq2|s_0-s||\log|x-y||e^{2|s_0-s||\log|x-y||}\\&=2|s_0-s||\log|x-y|||x-y|^{2|s_0-s|}.
	\end{align*}
	Taking this into account and using the regularity of $u_{\epsilon}$ and the property \eqref{logarithmic-decays}, we find that
	\begin{align}\label{bounds-of-the-difference-of-quadratics-forms}
\nonumber&|Q_{N,s,\Omega}(u_{\epsilon})- Q_{N,s_0,\Omega}(u_{\epsilon})|\\
	&\leq\frac{1}{c_{N,s_0}}(S_{N,s_0}(\Omega)+\epsilon)|c_{N,s}-c_{N,s_0}|+Cc_{N,s}\diam(\Omega)^{2|s_0-s|}|s_0-s|+\epsilon
	\end{align}
	where $\diam(\Omega)=\sup\{|x-y|:x,y\in\Omega\}$ is the diameter of $\Omega$ and $C=C(N,s_0,\gamma,\Omega,u_{\epsilon})>0$ is a positive constant. Now, by letting $s\searrow s_0$ in \eqref{bounds-of-the-difference-of-quadratics-forms} we obtain that
	\begin{equation*}
	\limsup_{s\searrow s_0}|Q_{N,s,\Omega}(u_{\epsilon})- Q_{N,s_0,\Omega}(u_{\epsilon})|\leq\epsilon.
	\end{equation*}
	Since $\epsilon$ can be chosen arbitrarily small, it follows that
	\begin{equation*}
	\limsup_{s\searrow s_0}|Q_{N,s,\Omega}(u_{\epsilon})- Q_{N,s_0,\Omega}(u_{\epsilon})|=0
	\end{equation*}
	and therefore, we deduce from \eqref{difference-of-critical-values} that
	\begin{equation}\label{limsup-of-the-critical-level}
	\limsup_{s\searrow s_0}S_{N,s}(\Omega)\leq S_{N,s_0}(\Omega),
	\end{equation}
	as desired.
\end{proof}
We have the following proposition. Its proof is given in the Appendix \ref{appendix}.
\begin{prop}\label{an-interresting-approximation-result-for-the-half-critical-value}
	Let $\Omega$ be a bounded Lipschitz open set of $\R^N$. Then
	\begin{equation}
	S_{N,1/2}(\Omega)=0.
	\end{equation}
\end{prop}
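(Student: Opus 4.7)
The plan is to exploit the fact, special to the borderline exponent $s=1/2$, that the constant function $1$ lies in $H^{1/2}_0(\Omega)$ even though it does not vanish on $\partial\Omega$. This is precisely the content of the approximation result in Appendix \ref{appendix}, and I shall use $1$ as a ``generalized'' test function in the Rayleigh quotient defining $S_{N,1/2}(\Omega)$.

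First, I observe that for $u\equiv 1$ the Gagliardo integrand $(u(x)-u(y))^2/|x-y|^{N+1}$ vanishes identically, so
\begin{equation*}
Q_{N,1/2,\Omega}(1)=0, \qquad \|1\|^2_{L^{2^*_{1/2}}(\Omega)}=|\Omega|^{(N-1)/N}>0.
\end{equation*}
Accepting from the Appendix that $1\in H^{1/2}_0(\Omega)$, I would pick a sequence $(\phi_n)\subset C^\infty_c(\Omega)$ with $\phi_n\to 1$ in $H^{1/2}(\Omega)$. Continuity of the Gagliardo seminorm then yields $Q_{N,1/2,\Omega}(\phi_n)\to 0$. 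Moreover, $\phi_n-1\in H^{1/2}_0(\Omega)$ and $\phi_n-1\to 0$ in $H^{1/2}(\Omega)$, so by the Sobolev embedding $H^{1/2}_0(\Omega)\hookrightarrow L^{2^*_{1/2}}(\Omega)$ (Proposition \ref{sobolev-embedding} with $s=1/2$) we get $\phi_n\to 1$ in $L^{2^*_{1/2}}(\Omega)$, and in particular $\|\phi_n\|^2_{L^{2^*_{1/2}}(\Omega)}\to |\Omega|^{(N-1)/N}>0$. Plugging the (eventually normalized) $\phi_n$ into the definition of $S_{N,1/2}(\Omega)$ then gives
\begin{equation*}
S_{N,1/2}(\Omega)\leq \lim_{n\to\infty}\frac{Q_{N,1/2,\Omega}(\phi_n)}{\|\phi_n\|^2_{L^{2^*_{1/2}}(\Omega)}}=0,
\end{equation*}
and since $Q_{N,1/2,\Omega}\geq 0$ the reverse inequality is trivial, so $S_{N,1/2}(\Omega)=0$.

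The real work, which is deferred to the Appendix and which I expect to be the main obstacle, is to verify that $1\in H^{1/2}_0(\Omega)$ for a bounded Lipschitz $\Omega$ in the weak (bi-Lipschitz chart) sense adopted in this paper. The natural strategy is to build cutoff functions $\eta_n$ which equal $1$ on $\{\dist(\cdot,\partial\Omega)>1/n\}$, vanish in a smaller tubular neighborhood of $\partial\Omega$, and have a logarithmic transition in the transversal direction. After flattening $\partial\Omega$ locally by bi-Lipschitz charts and using a partition of unity, the estimation of $[\eta_n]^2_{H^{1/2}(\Omega)}$ reduces to an explicit one-dimensional calculation in which the logarithmic profile makes the fractional seminorm of a jump-like function of width $1/n$ tend to $0$ as $n\to\infty$; a final smoothing step replaces $\eta_n$ by a $C^\infty_c(\Omega)$-approximant, giving $1\in H^{1/2}_0(\Omega)$.
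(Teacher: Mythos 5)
Your proposal is correct and follows essentially the same route as the paper: the whole weight rests on the Appendix construction (logarithmic cutoff $\chi_k$, bi-Lipschitz flattening, partition of unity) showing $1\in H^{1/2}_0(\Omega)$, after which one tests the Rayleigh quotient with the approximating sequence, exactly as in the paper's Proposition \ref{asymptotice-of-sobolev-constant-for-s-geq-1/2}. The only cosmetic difference is that the paper plugs in the explicitly constructed Lipschitz functions $u_k$ and only needs $\liminf_k\|u_k\|_{L^{2^*_s}(\Omega)}>0$ (which already follows from $L^2$ convergence and H\"older on the bounded domain), whereas you invoke the Sobolev embedding at $s=1/2$ to get full $L^{2^*_{1/2}}$ convergence; both are fine.
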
 
We can now give the proof of our key proposition.
\begin{proof}[Proof of Proposition \ref{asymptotic-behavior-of-crritical-level-as-s-tends-to-1/2}]
	Since $S_{N,s}(\Omega)>0$ then if follows that
	\begin{equation}\label{liminf-critical-value}
	\liminf_{s\searrow1/2} S_{N,s}(\Omega)\geq0.
	\end{equation}
	On the other hand, applying Lemma \ref{continuous-lemma} together with Proposition \ref{an-interresting-approximation-result-for-the-half-critical-value}, we have that
	\begin{equation}\label{limsup-critical-value}	\limsup_{s\searrow1/2}S_{N,s}(\Omega)\leq S_{N,1/2}(\Omega)=0,
	\end{equation}
	Now, from \eqref{liminf-critical-value} and \eqref{limsup-critical-value} we deduce \eqref{limit-of-the-critical-value}, and this ends the proof of Proposition \ref{asymptotic-behavior-of-crritical-level-as-s-tends-to-1/2}.
\end{proof}
Having the above key tools in mind, we can now give the proof of Theorem \ref{existence-result-for-s-close-to-1/2}.
\begin{proof}[Proof of Theorem \ref{existence-result-for-s-close-to-1/2}]
	Let $s\in(1/2,1)$ with $s$ close to $1/2$. Then by Proposition \ref{asymptotic-behavior-of-crritical-level-as-s-tends-to-1/2}, we have that $S_{N,s}(\Omega)\rightarrow0$ as $s\searrow1/2$. Consequently, for $s$ close to $1/2$, and since $S_{N,s}(\R^N_+)>0$ for all $s\in (0,1)$ (see e.g. \cite[Lemma 2.1]{dyda2011fractional}), we deduce that
	\begin{equation}\label{key-condition-for-the-existence-for-critical-problem}
	0<S_{N,s}(\Omega)<S_{N,s}(\R^N_+)\quad\text{for all}\quad s\in(1/2,s_0)
	\end{equation}
for some $s_0\in (1/2,1)$. With the above key inequality, we complete the proof by following closely the argument developed by Frank et al. \cite{frank2018minimizers} for the proof of Theorem 4 in there.
\end{proof}
\begin{remark}
	Since $Q_{N,s,\Omega}(|u|)\leq Q_{N,s,\Omega}(u)$ then the minimizer in \eqref{minimization-problem-on-domain}, or equivalently, the solution of \eqref{critical-problem-for-regional-0} can be assumed nonnegative.
\end{remark}

\section{The radial problem}
In the present section, we consider the existence of minimizers to   quotient
\be\label{eq:defSNs}
S_{N,s,rad}(\cB,h):=\inf_{u\in C^\infty_{c,rad}(\cB)} \frac{[u]_{H^s(\cB)}^2+\int_{\cB} hu^2dx}{\|u\|_{L^{2^*_s}(\cB)}^2}.
\ee
Here and in the following, we consider the class of radial potentials $h\in L^\infty(\cB)$ such that 
\be \label{eq:defSNsOh}
S_{N,s,rad}(\cB,h)>0.
\ee
 We observe that if $h(x)\equiv -\l$ with $\l<\l_1(\cB)$, the first eigenvalue of $\Ds_{\cB}$, then \eqref{eq:defSNsOh} holds.
%
The aim of this section is to provide situations in which $S_{N,s,rad}(\cB,h)<S_{N,s}(\R^N) .$
%
\begin{remark}
We  observe that if $ h$ satisfies \eqref{eq:defSNsOh}, then  if $u\in H^s_0(\cB)$ satisfies, weakly, $\Ds_{\cB} u=f$ in $\cB$ with $f \in L^p(\cB)$, for some $p>\frac{N}{2s}$, then $u\in C(\cB)\cap L^\infty(\cB)$. This follows from the argument of Proposition \ref{boundedness-in-critical-problem} and the interior regularity.
\end{remark}
We start recalling the following result from  \cite{frank2018minimizers}.
\begin{prop}\label{key-prop-1} [\cite[Proposition 7]{frank2018minimizers}]
	Let $s\in (1/2,1)$ and $N\geq4s$. Then
	\begin{equation}\label{key-assumption-for-radial-minimization-on-the-unit-ball}
	S_{N,s,rad}(\cB, 0)<S_{N,s}(\R^N) .
	\end{equation}
\end{prop}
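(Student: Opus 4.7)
The plan is to exhibit a radial test function, built by cutting off the Aubin--Talenti bubble centered at the origin, whose regional Rayleigh quotient on $\cB$ is strictly smaller than $S_{N,s}(\R^N)$. The starting point is the identity
$$
Q_{N,s,\R^N}(u) = Q_{N,s,\cB}(u) + c_{N,s}\int_\cB u(x)^2 \kappa(x)\,dx,
\qquad \kappa(x) := \int_{\R^N\setminus \cB}\frac{dy}{|x-y|^{N+2s}},
$$
valid for any $u \in H^s_0(\cB)$ extended by zero to $\R^N$. Since $\kappa$ is radial, continuous and strictly positive on $\cB$, with $\kappa(0) = \omega_{N-1}/(2s)$, the regional form on $\cB$ differs from the full-space form by a strictly negative, quantifiable correction. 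The argument thus reduces to a comparison between (i) the cost of cutting off the bubble in the $\R^N$-Rayleigh quotient and (ii) the gain extracted from the correction term.

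Let $U(x) = (1+|x|^2)^{-(N-2s)/2}$ be the standard Aubin--Talenti minimizer for $S_{N,s}(\R^N)$, set $U_\varepsilon(x) = \varepsilon^{-(N-2s)/2}U(x/\varepsilon)$, and fix a smooth radial cut-off $\eta \in C^\infty_c(\cB)$ with $\eta \equiv 1$ on $B_{1/2}$. Then $u_\varepsilon := \eta U_\varepsilon \in H^s_{0,rad}(\cB)$. Standard truncated-bubble estimates of Servadei--Valdinoci type yield
$$
\frac{Q_{N,s,\R^N}(u_\varepsilon)}{\|u_\varepsilon\|_{L^{2^*_s}(\cB)}^2} \leq S_{N,s}(\R^N) + C\varepsilon^{N-2s},
$$
while the change of variables $x = \varepsilon y$ gives
$$
\int_\cB u_\varepsilon^2 \kappa\,dx = \varepsilon^{2s}\int_{B_{1/\varepsilon}} \eta(\varepsilon y)^2 U(y)^2 \kappa(\varepsilon y)\,dy,
$$
which, using $\eta\equiv 1$ on $B_{1/2}$ and $\kappa(\varepsilon y)\to \kappa(0)>0$, is bounded below by $c_0\varepsilon^{2s}$ when $N > 4s$ (since $U^2 \in L^1(\R^N)$) and by $c_0 \varepsilon^{2s}|\log \varepsilon|$ when $N = 4s$ (by logarithmic divergence of $\int_{B_R}U^2$).

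Combining these two ingredients produces
$$
\frac{Q_{N,s,\cB}(u_\varepsilon)}{\|u_\varepsilon\|_{L^{2^*_s}(\cB)}^2} \leq S_{N,s}(\R^N) + C\varepsilon^{N-2s} - c_0'\,\varepsilon^{2s} g_N(\varepsilon),
$$
where $g_N(\varepsilon) = 1$ if $N > 4s$ and $g_N(\varepsilon) = |\log \varepsilon|$ if $N = 4s$. Under the hypothesis $N \geq 4s$ one has $2s \leq N-2s$, so the gain $\varepsilon^{2s} g_N(\varepsilon)$ dominates the truncation loss $\varepsilon^{N-2s}$ for $\varepsilon$ small enough, yielding $S_{N,s,rad}(\cB,0) < S_{N,s}(\R^N)$.

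The main obstacle is the borderline case $N = 4s$, where the polynomial orders of the gain and the loss coincide: one must extract precisely the logarithmic enhancement $|\log \varepsilon|$ from $\int_{B_{1/\varepsilon}} U^2$, and simultaneously verify that the remainder in the Servadei--Valdinoci expansion does not hide a matching negative term at order $\varepsilon^{2s}$ that cancels the gain. A sharp asymptotic of $\int_{B_R} U^2\,dy$ as $R \to \infty$, together with a uniform lower bound $\kappa(\varepsilon y)\geq \kappa(0)/2$ on $B_{1/(2\varepsilon)}$ for small $\varepsilon$, closes this critical case.
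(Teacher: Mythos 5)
Your argument is correct, and since the paper does not prove this proposition itself but quotes it from \cite[Proposition 7]{frank2018minimizers}, the relevant comparison is with that source: your cut-off-bubble computation, based on the identity $Q_{N,s,\R^N}(u)=Q_{N,s,\cB}(u)+c_{N,s}\int_{\cB}u^2\kappa_{\cB}\,dx$ (the same killing-measure decomposition the paper uses in Lemma \ref{l1}), with gain of order $\varepsilon^{2s}$ for $N>4s$ and $\varepsilon^{2s}|\log\varepsilon|$ at $N=4s$ against a truncation cost $O(\varepsilon^{N-2s})$, is essentially the argument given there. The only point to state explicitly is the standard Servadei--Valdinoci bound $Q_{N,s,\R^N}(\eta U_\varepsilon)\le S_{N,s}(\R^N)\|\eta U_\varepsilon\|^2_{L^{2^*_s}}+C\varepsilon^{N-2s}$, which indeed hides no further terms at order $\varepsilon^{2s}$, so your treatment of the borderline case $N=4s$ closes correctly.
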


The following result plays a crucial role for the existence theorems.
\begin{prop}\label{key-prop-3}
	Let $1/2<s<1$ and $N\geq2$. Then there is a constant $C=C(N,s)>0$ such that for all $u\in H^s_{0,rad}(\cB)$,
	\begin{equation}
	Q_{N,s,\cB}(u)\geq S_{N,s}(\R^N)\|u\|^2_{L^{2^*_s}(\cB)}-C_{\cB}\|u\|^2_{L^2(\cB)}.
	\end{equation}
\end{prop}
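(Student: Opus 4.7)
The plan is to rewrite the desired inequality as
\[
Q_{N,s,\cB}(u)+C\|u\|^2_{L^2(\cB)}\ \geq\ S_{N,s}(\R^N)\|u\|^2_{L^{2^*_s}(\cB)}
\]
and to establish it by contradiction via a concentration-compactness argument exploiting the $O(N)$-invariance. The starting point is the following decomposition: since $s>1/2$, any $u\in H^s_{0,rad}(\cB)$ has vanishing trace, so its zero extension $\bar u$ lies in $H^s(\R^N)$, and splitting the $\R^N\times\R^N$ integral yields
\[
[\bar u]^2_{H^s(\R^N)}=Q_{N,s,\cB}(u)+c_{N,s}\int_\cB u^2(x)K(x)\,dx,\qquad K(x):=\int_{\R^N\setminus\cB}\frac{dy}{|x-y|^{N+2s}}.
\]
Combined with the sharp Sobolev inequality on $\R^N$ and $\|\bar u\|_{L^{2^*_s}(\R^N)}=\|u\|_{L^{2^*_s}(\cB)}$, one gets $Q_{N,s,\cB}(u)\geq S_{N,s}(\R^N)\|u\|^2_{L^{2^*_s}(\cB)}-c_{N,s}\int_\cB u^2K\,dx$. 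The tail $\int u^2K$ cannot be bounded pointwise by $\|u\|^2_{L^2}$ since $K(x)\sim\delta_\cB(x)^{-2s}$ near $\partial\cB$, so the missing control will be recovered only through the concentration analysis below.

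Assume the proposition fails. Then there is $(u_n)\subset H^s_{0,rad}(\cB)$ with $\|u_n\|_{L^{2^*_s}(\cB)}=1$ and $Q_{N,s,\cB}(u_n)+n\|u_n\|^2_{L^2(\cB)}<S_{N,s}(\R^N)$. Hence $(u_n)$ is bounded in $H^s_0(\cB)$ with $\|u_n\|^2_{L^2(\cB)}\to 0$, and by the radial compact embedding into $L^2(\cB)$ one has $u_n\rightharpoonup 0$ in $H^s_0(\cB)$, while $|u_n|^{2^*_s}dx\rightharpoonup\nu$ weakly for some probability measure $\nu$ on $\overline{\cB}$. The $O(N)$-invariance of $\nu$ forces it to be a combination of a Dirac at $0$ with measures supported on spheres of positive radius. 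I would rule out the spherical pieces by reducing $Q_{N,s,\cB}$ for radial functions to a weighted one-dimensional fractional seminorm of the radial profile: since $2s>1$, a width-$\varepsilon$ one-dimensional $H^s$-bump has seminorm scaling like $\varepsilon^{1-2s}\to\infty$, which is incompatible with the boundedness of $Q_{N,s,\cB}(u_n)$. Hence $\nu=\delta_0$.

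Concentration at the origin means that, after rescaling, $\varepsilon_n^{(N-2s)/2}u_n(\varepsilon_n\,\cdot)\rightharpoonup v$ in $H^s(\R^N)$ for some nontrivial radial profile $v$ with $\|v\|_{L^{2^*_s}(\R^N)}=1$. Continuity of $K$ at the origin together with the concentration of $u_n^2\,dx$ there yields
\[
c_{N,s}\int_\cB u_n^2K\,dx=c_{N,s}K(0)\|u_n\|^2_{L^2(\cB)}(1+o(1)),
\]
while $[\bar u_n]^2_{H^s(\R^N)}\to[v]^2_{H^s(\R^N)}\geq S_{N,s}(\R^N)$. Substituting into the failing inequality produces $(n-c_{N,s}K(0)+o(1))\|u_n\|^2_{L^2(\cB)}\leq o(\|u_n\|^2_{L^2(\cB)})$, which is impossible for $n$ large as soon as $\|u_n\|^2_{L^2(\cB)}>0$ (guaranteed when $N>4s$, since then $v\in L^2(\R^N)$). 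This closes the contradiction and yields the proposition with $C$ of the order of $c_{N,s}K(0)=c_{N,s}|S^{N-1}|/(2s)$.

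I expect the main obstacle to be the rigorous exclusion of spherical concentration, which requires a quantitative one-dimensional reduction of $Q_{N,s,\cB}$ on radial functions. A secondary technical point is the handling of low dimensions $N\leq 4s$, where the Aubin--Talenti profile $v$ fails to lie in $L^2(\R^N)$ and the $L^2$-mass of $u_n$ has to be tracked through a more delicate rescaling (in particular $\|u_n\|^2_{L^2(\cB)}\sim\varepsilon_n^{N-2s}$ rather than $\varepsilon_n^{2s}$) to still obtain the required positivity at the end.
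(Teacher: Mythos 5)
Your route (negation, normalization, concentration--compactness for radial sequences) is genuinely different from the paper's, which proves the inequality directly by an IMS-type localization: a cutoff $\chi_1$ supported near the origin, where the killing measure $\kappa_{\cB}(x)=c_{N,s}\int_{\R^N\setminus\cB}|x-y|^{-N-2s}dy$ is bounded, so the zero-extension trick you start with does close (Lemma \ref{l1}); a cutoff $\chi_2$ supported away from the origin, where radiality enters through the Strauss lemma combined with the fractional Hardy inequality to give $\|u\|^2_{L^\infty(\cB\setminus B_\rho)}\lesssim Q_{N,s,\cB}(u)$ and hence, by interpolation, $M\|u\|^2_{L^{2^*_s}}\le Q_{N,s,\cB}(u)+C\|u\|^2_{L^2}$ for any $M$ (Lemma \ref{l2}); and a Schur-test bound of the commutator term by $\|u\|^2_{L^2}$. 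Unfortunately, as it stands your argument has gaps exactly at the points where the paper's two lemmas do the work. First, the exclusion of concentration on spheres of positive radius (including the boundary sphere) is only a heuristic about model bumps; you acknowledge this yourself, but it is the heart of the proposition, since without radiality the statement would fail by concentrating near a single boundary point, and the paper's replacement for your heuristic is precisely the Strauss-plus-Hardy pointwise bound. Second, and more decisively, your closing identity $c_{N,s}\int_{\cB}u_n^2K\,dx=c_{N,s}K(0)\|u_n\|^2_{L^2}(1+o(1))$ does not follow from $\nu=\delta_0$: the Lions measure controls where the $L^{2^*_s}$-density goes, not where the (vanishing) $L^2$-density sits, and since $K\sim\delta_{\cB}^{-2s}$ is unbounded near $\partial\cB$, a boundary-layer piece of $u_n$ with negligible $L^{2^*_s}$-norm can make $\int u_n^2K\,dx$ much larger than $K(0)\|u_n\|^2_{L^2}$ (Hardy only bounds it by the seminorm, which does not vanish). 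Without that identity your final inequality only yields $\|u_n\|^2_{L^2}=O(1/n)$, which is no contradiction.

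Two further points would need repair even if the above were fixed: the existence of a single nontrivial rescaled profile $v$ capturing the full mass after blow-up is a profile-decomposition statement, not automatic from $\nu=\delta_0$; and your argument, by your own account, only closes for $N>4s$, whereas the proposition is asserted for all $N\ge 2$ and $s\in(1/2,1)$, a range that includes $N<4s$ (e.g.\ $N=2$ with $s$ close to $1$). So the proposal is an interesting alternative strategy, but in its present form it does not prove the stated result; the missing quantitative input is exactly the radial $L^\infty$-control away from the origin that the paper extracts from the Strauss and Hardy inequalities.
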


  For this, we need the following two lemmas.
\begin{lemma}\label{l1}
	For every $\rho\in(0,1)$, there exists $K_{\rho}>0$ with the property that
	\begin{equation*}
	Q_{N,s,\cB}(u)\geq S_{N,s}(\R^N)\|u\|^2_{L^{2^*_s}(\cB)}-K_{\rho}\|u\|^2_{L^2(\cB)}\quad\text{for every}~u\in H^s_{0,rad}(\cB)~\text{with}~\supp u\subset B_{\rho}.
	\end{equation*}
\end{lemma}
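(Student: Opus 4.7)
The plan is to extend $u$ by zero outside $\cB$, compare the resulting full Gagliardo seminorm over $\R^N$ with the regional quadratic form $Q_{N,s,\cB}(u)$, and then apply the classical fractional Sobolev inequality on $\R^N$. The support assumption $\supp u\subset B_\rho$ gives a positive distance $1-\rho$ between $\supp u$ and $\R^N\setminus\cB$, which is exactly what is needed to control the crossed tail integral by an $L^2$--norm.

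First I would denote by $\widetilde u$ the zero extension of $u$ to $\R^N$ and split
\begin{equation*}
\int_{\R^N}\!\!\int_{\R^N}\frac{(\widetilde u(x)-\widetilde u(y))^2}{|x-y|^{N+2s}}\,dx\,dy
=\int_{\cB}\!\!\int_{\cB}\frac{(u(x)-u(y))^2}{|x-y|^{N+2s}}\,dx\,dy
+2\int_{\cB}\!\!\int_{\R^N\setminus\cB}\frac{u(x)^2}{|x-y|^{N+2s}}\,dy\,dx,
\end{equation*}
where the double integral over $(\R^N\setminus\cB)\times(\R^N\setminus\cB)$ vanishes and the two cross terms coincide by symmetry. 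Multiplying by $c_{N,s}/2$ this gives the identity
\begin{equation*}
\tfrac{c_{N,s}}{2}[\widetilde u]_{H^s(\R^N)}^2=Q_{N,s,\cB}(u)+c_{N,s}\int_{\cB}u(x)^2\left(\int_{\R^N\setminus\cB}\frac{dy}{|x-y|^{N+2s}}\right)dx.
\end{equation*}

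Next, I would bound the inner tail integral uniformly for $x\in B_\rho$. Since for $x\in B_\rho$ and $y\in\R^N\setminus\cB$ one has $|y-x|\geq 1-|x|\geq 1-\rho$, the change of variable $z=y-x$ yields
\begin{equation*}
\int_{\R^N\setminus\cB}\frac{dy}{|x-y|^{N+2s}}\leq\int_{|z|>1-\rho}\frac{dz}{|z|^{N+2s}}=\frac{C_N}{(1-\rho)^{2s}},
\end{equation*}
so that the tail term is dominated by $\dfrac{c_{N,s}C_N}{(1-\rho)^{2s}}\|u\|_{L^2(\cB)}^2$; in particular $\widetilde u\in \dot H^s(\R^N)$. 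Setting $K_\rho:=c_{N,s}C_N(1-\rho)^{-2s}$ and applying the fractional Sobolev inequality
$$\|\widetilde u\|_{L^{2^*_s}(\R^N)}^2\leq S_{N,s}(\R^N)^{-1}\,[\widetilde u]_{H^s(\R^N)}^2,$$
together with $\|\widetilde u\|_{L^{2^*_s}(\R^N)}=\|u\|_{L^{2^*_s}(\cB)}$, I obtain the claimed inequality.

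The argument never uses radiality, so the conclusion holds on all of $H^s_0(\cB)$ restricted to functions supported in $B_\rho$. The only technical point that should be double-checked is that the decomposition above is justified for general $u\in H^s_0(\cB)$ with $\supp u\subset B_\rho$; this is standard because the compactness of $\supp u$ inside $\cB$ guarantees that $\widetilde u$ can be approximated by $C^\infty_c(B_\rho)$--functions in the relevant Hilbertian norm (obtained by mollification), so both sides pass to the limit. This is the only step that is not a direct computation, and I expect it to be the main, though mild, obstacle.
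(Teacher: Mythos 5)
Your proposal is correct and follows essentially the same route as the paper: the zero-extension identity you derive is exactly the paper's decomposition $Q_{N,s,\R^N}(u)=Q_{N,s,\cB}(u)+\int_{\cB}\kappa_{\cB}u^2\,dx$ with the killing measure $\kappa_{\cB}$, which is then bounded on $B_\rho$ by a constant times $(1-\rho)^{-2s}$ and combined with the Sobolev inequality on $\R^N$. The paper likewise never uses radiality in this step, so your remark about $H^s_0(\cB)$ is consistent with it.
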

\begin{proof}
	Let $u\in H^s_{0,rad}(\cB)$ with $\supp u\subset B_{\rho}$. We have
	\begin{equation*}
	Q_{N,s,\cB}(u)=Q_{N,s,\R^N}(u)-\int_{\cB}\kappa_{\cB}(x)u(x)^2\ dx\geq S_{N,s}(\R^N)\|u\|^2_{L^{2^*_s}(\cB)}-\int_{\cB}\kappa_{\cB}(x)u(x)^2\ dx,
	\end{equation*}
	with being $\kappa_{\cB}$ the killing measure for $\cB$ define as $\kappa_{\cB}(x)=c_{N,s}\int_{\R^N\setminus\cB}\frac{1}{|x-y|^{N+2s}}\ dy,~x\in\cB$.
  On the other hand, since $\supp u\subset B_{\rho}$, then
	\begin{align*}
	\int_{\cB}\kappa_{\cB}(x)u(x)^2\ dx=	\int_{B_{\rho}}\kappa_{\cB}(x)u(x)^2\ dx
	\end{align*}
	and for every $x\in B_\rho$,
	\begin{equation*}
	\kappa_{\cB}(x)=c_{N,s}\int_{\R^N\setminus\cB}\frac{dy}{|x-y|^{N+2s}}\leq c_{N,s}\int_{|z|\geq 1-\rho}|z|^{-N-2s}\ dz=a_{N,s}(1-\rho)^{-2s}.
	\end{equation*}
	Taking this into account, we find that
	\begin{align*}
	\int_{\cB}\kappa_{\cB}(x)u(x)^2\ dx\leq a_{N,s}(1-\rho)^{-2s}\int_{B_{\rho}}u(x)^2\ dx\leq K_{\rho}\|u\|^2_{L^2(B_\rho)}\leq K_{\rho}\|u\|^2_{L^2(\cB)},
	\end{align*}
	with $K_{\rho}=a_{N,s}(1-\rho)^{-2s}$. From this, we get that
	\begin{equation*}
	Q_{N,s,\cB}(u)\geq S_{N,s}(\R^N)\|u\|^2_{L^{2^*_s}(\cB)}-K_{\rho}\|u\|^2_{L^2(\cB)},
	\end{equation*}
	concluding the proof.
\end{proof}
\begin{lemma}\label{l2}
	For every $M, \rho>0$ there exists $C_{\rho,M}>0$ with
	\begin{equation*}
	Q_{N,s,\cB}(u)\geq M\|u\|^2_{L^{2^*_s}(\cB)}-C_{\rho,M}\|u\|^2_{L^2(\cB)}\quad\text{for every}~u\in H^s_{0,rad}(\cB)~\text{with}~u\equiv0~\text{in}~B_\rho.
	\end{equation*}
\end{lemma}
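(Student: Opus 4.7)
The plan is to argue by contradiction and reduce the statement to the compactness of the embedding
\[
X_\rho := \{ u \in H^s_{0,rad}(\cB) : u \equiv 0 \text{ in } B_\rho \} \hookrightarrow L^{2^*_s}(\cB).
\]
If the conclusion failed for some $M, \rho > 0$, there would exist a sequence $u_n \in X_\rho$ with $\|u_n\|_{L^{2^*_s}(\cB)} = 1$ and $Q_{N,s,\cB}(u_n) < M - n \|u_n\|^2_{L^2(\cB)}$, which forces $\|u_n\|_{L^2(\cB)} \to 0$ while $\{u_n\}$ remains bounded in $H^s_0(\cB)$. Assuming compactness of $X_\rho \hookrightarrow L^{2^*_s}(\cB)$, a subsequence converges strongly in $L^{2^*_s}(\cB)$ to some $u_*$ with $\|u_*\|_{L^{2^*_s}(\cB)} = 1$; but $u_n \to 0$ in $L^2(\cB)$ also forces $u_* = 0$, a contradiction.

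The compact embedding will follow from combining two ingredients: the standard compact embedding $H^s_0(\cB) \hookrightarrow L^2(\cB)$ of Proposition \ref{sobolev-embedding}, and a uniform bound of the form $\|u\|_{L^\infty(\cB)} \leq C_{\rho,N,s}\,[u]_{H^s(\cB)}$ valid for all $u \in X_\rho$. Indeed, given a bounded sequence in $X_\rho$, Rellich produces an $L^2$-convergent subsequence, and the interpolation inequality
\[
\|u_n - u_*\|^{2^*_s}_{L^{2^*_s}(\cB)} \leq \|u_n - u_*\|^{2}_{L^2(\cB)}\, \|u_n - u_*\|^{2^*_s - 2}_{L^\infty(\cB)}
\]
upgrades this to strong convergence in $L^{2^*_s}(\cB)$.

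For the $L^\infty$-bound I would write $u(x) = \phi(|x|)$ with $\phi \equiv 0$ on $[0, \rho]$ and $\phi(1)=0$, and decompose the Gagliardo seminorm in polar coordinates as
\[
[u]^2_{H^s(\cB)} = \frac{c_{N,s}}{2} \int_0^1\!\!\int_0^1 |\phi(r) - \phi(r')|^2\, r^{N-1} r'^{N-1}\, K(r,r')\, dr\, dr',
\]
with $K(r,r') = \int_{S^{N-1}}\!\int_{S^{N-1}} |r\omega - r'\omega'|^{-(N+2s)}\, d\omega\, d\omega'$. After the change of variable $t = \omega \cdot \omega'$ and an analysis of the dominant singular contribution near $t = 1$ via the rescaling $1 - t = \tfrac{(r-r')^2}{2rr'}\, w$, one establishes the lower bound
\[
r^{N-1} r'^{N-1} K(r,r') \geq c_{\rho,N,s}\, |r - r'|^{-1 - 2s} \quad \text{for all } r, r' \in [\rho, 1],
\]
which reduces the $N$-dimensional seminorm to the 1D one: $[\phi]^2_{H^s((\rho, 1))} \leq C_\rho\, [u]^2_{H^s(\cB)}$. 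The 1D Sobolev embedding $H^s_0((\rho, 1)) \hookrightarrow L^\infty((\rho, 1))$, valid for $s > 1/2$, then yields $\|u\|_{L^\infty(\cB)} = \|\phi\|_{L^\infty([\rho, 1])} \leq C_{\rho,N,s}\,[u]_{H^s(\cB)}$. The main obstacle is this lower bound on $K$: the prefactor $(rr')^{-(N-1)/2}$ produced by the dominant-balance change of variables must be absorbed into a uniform constant, which is precisely why the hypothesis $r, r' \geq \rho > 0$ is critical, and the condition $s > 1/2$ is essential both for matching the exponent $-1-2s$ in the 1D reduction and for the closing 1D Sobolev embedding into $L^\infty$.
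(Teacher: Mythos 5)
Your argument is correct in substance, but it takes a different route from the paper's. The paper first uses that $H^s_0(\cB)=\cH^s_0(\cB)$ for $s>1/2$, then combines the fractional Strauss radial lemma on $\R^N$ with the fractional Hardy inequality (to pass from $Q_{N,s,\R^N}(u)=Q_{N,s,\cB}(u)+\int_\cB\kappa_\cB u^2$ back to $Q_{N,s,\cB}(u)$), obtaining directly the decay bound $|u(x)|^2\leq d_{N,s,\cB}|x|^{-(N-2s)}Q_{N,s,\cB}(u)$, hence $\|u\|^2_{L^\infty(\cB\setminus B_\rho)}\leq d_{N,s,\cB}\rho^{-(N-2s)}Q_{N,s,\cB}(u)$; it then concludes in two lines by interpolation of $L^{2^*_s}$ between $L^2$ and $L^\infty$ plus Young's inequality, with an explicit constant $C_{\rho,M}$ and no compactness argument. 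You reach the same key $L^\infty$ control by a self-contained polar-coordinate reduction: your lower bound $r^{N-1}r'^{N-1}K(r,r')\geq c_\rho|r-r'|^{-1-2s}$ on $[\rho,1]^2$ is correct (the factor $(rr')^{(N-1)/2}\geq\rho^{N-1}$ and the cut-off in the rescaled variable are uniform there), and it buys independence from the cited Strauss and Hardy inequalities; on the other hand your contradiction/compactness wrapper is unnecessary and loses the explicit constant --- once you have $\|u\|_{L^\infty(\cB)}\leq C_\rho[u]_{H^s(\cB)}$ on the subspace, the paper's interpolation--Young step finishes the proof directly.

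Two small repairs to your 1D step: the restriction $\phi|_{(\rho,1)}$ need not belong to $H^s_0((\rho,1))$, and the Gagliardo seminorm alone can never control the sup norm (constants have zero seminorm). Instead, perform the kernel reduction on $(\rho/2,1)$, where $\phi\equiv 0$ on $(\rho/2,\rho)$, and use the one-dimensional Morrey-type embedding $|\phi(t)-\phi(t')|\leq C|t-t'|^{s-1/2}[\phi]_{H^s((\rho/2,1))}$ valid for $s>1/2$; vanishing on the subinterval then upgrades this to $\|\phi\|_{L^\infty((\rho/2,1))}\leq C_\rho[\phi]_{H^s((\rho/2,1))}\leq C_\rho' [u]_{H^s(\cB)}$. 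Also, your closing remark is slightly off: the exponent $-1-2s$ in the kernel bound appears for every $s\in(0,1)$; the hypothesis $s>1/2$ is needed only for the $L^\infty$ embedding (and, in the paper's route, for $H^s_0(\cB)=\cH^s_0(\cB)$ and the Hardy inequality).
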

\begin{proof}
	We first recall that for $s\in (1/2,1)$, $H^s_0(\cB)=\cH^s_0(\cB)$. Therefore, for every $u\in H^s_{0,rad}(\cB)\subset H^s_0(\cB)=\cH^s_0(\cB)$, we have $u\in\cH^s_{0,rad}(\cB)$. Thus, combining the fractional version of the Strauss radial lemma (see \cite[Lemma 2.5]{dinh2019existence}) and the Hardy inequality (see \cite{dyda2004fractional}) we get that
	\begin{align}
\nonumber	|u(x)|^2&\leq \gamma_{N,s}|x|^{-(N-2s)}Q_{N,s,\R^N}(u)=\gamma_{N,s}|x|^{-(N-2s)}\Bigg(Q_{N,s,\cB}(u)+\int_{\cB}\kappa_{\cB}(x)u(x)^2\ dx\Bigg)\\& \nonumber\leq \gamma_{N,s}|x|^{-(N-2s)}\Bigg(Q_{N,s,\cB}(u)+\gamma_{N,s,\cB}\int_{\cB}\delta_{\cB}(x)^{-2s}u(x)^2\ dx\Bigg)\\&\leq
d_{N,s,\cB}|x|^{-(N-2s)}Q_{N,s,\cB}(u),
	\end{align}
	which implies that
	\begin{equation}
	\|u\|^2_{L^{\infty}(\cB\setminus B_{\rho})}\leq d_{N,s,\cB}\rho^{-(N-2s)}Q_{N,s,\cB}(u)\quad\text{for every}~u\in H^s_{0,rad}(\cB)~\text{with}~u\equiv0~\text{in}~B_\rho.
	\end{equation}
	Consequently, using interpolation and Young's inequality with exponents $p=2/\alpha$ and $p'=2/(2-\alpha)$, we find that, for all $M>0$,
	\begin{align*}
	\|u\|^2_{L^{2^*_s}(\cB\setminus B_{\rho})}&\leq C\|u\|^{\alpha}_{L^2(\cB\setminus B_{\rho})}\|u\|^{2-\alpha}_{L^{\infty}(\cB\setminus B_{\rho})}\\
	&\leq\frac{1}{Md_{N,s,\cB}\rho^{-(N-2s)}}\|u\|^2_{L^{\infty}(\cB\setminus B_{\rho})}+\frac{C_{\rho,M}}{M}\|u\|^2_{L^{2}(\cB\setminus B_{\rho})}
	\end{align*}
	with suitable constants $\alpha\in(0,2)$ and $C_{\rho,M}>0$, and hence
	\begin{align*}
	M\|u\|^2_{L^{2^*_s}(\cB\setminus B_{\rho})}&\leq\frac{1}{d_{N,s,\cB}\rho^{-(N-2s)}}\|u\|^2_{L^{\infty}(\cB\setminus B_{\rho})}+C_{\rho,M}\|u\|^2_{L^{2}(\cB\setminus B_{\rho})}\\
	&\leq Q_{N,s,\cB}(u)+C_{\rho,M}\|u\|^2_{L^2(\cB)}
	\end{align*}
	for every $u\in H^s_{0,rad}(\cB)$ with $u\equiv0$ in $B_\rho$. The claim follows.
\end{proof}
In the following, we give the
\begin{proof}[Proof of Proposition \ref{key-prop-3}]
	We choose $0<\rho_2<\rho_1<1$. Moreover, let $\chi_1, \chi_2\in C^{\infty}_c(\R^N)$ with $0\leq\chi_i\leq1$, $\chi^2_1+\chi^2_2\equiv1$ in $\cB$ and $\supp\chi_1\subset B_{\rho_1}$, $\supp\chi_2\subset\R^N\setminus\overline{B_{\rho_2}}$. Then we can write $u=\chi^2_1u+\chi^2_2u$ in $\cB$. \\
 Applying $Q_{N,s,\cB}(\cdot)$ to $u=\sum_{i=1}^{2}\chi^2_iu$, we easily find that
	\begin{equation}\label{r1}
	Q_{N,s,\cB}(u)=\sum_{i=1}^{2}Q_{N,s,\cB}(\chi_iu)-\frac{c_{N,s}}{2}\sum_{i=1}^{2}\int_{\cB}\int_{\cB}\frac{(\chi_i(x)-\chi_i(y))^2}{|x-y|^{N+2s}}u(x)u(y)\ dxdy.
	\end{equation}
By the regularity of $\chi_i$, we observe that there is no singularity in the double integral and therefore it follows from the Schur test that there exists a positive constant $C>0$ such that
\begin{equation}\label{r2}
\sum_{i=1}^{2}\int_{\cB}\int_{\cB}\frac{(\chi_i(x)-\chi_i(y))^2}{|x-y|^{N+2s}}u(x)u(y)\ dxdy\leq C\int_{\cB}u^2\ dx.
\end{equation}
In fact, we can write
\begin{align}\label{schur-1}
\int_{\cB}\int_{\cB}\frac{(\chi_i(x)-\chi_i(y))^2}{|x-y|^{N+2s}}u(x)u(y)\ dxdy&\leq C\int_{\cB}\int_{\cB}K(x,y)u(x)u(y)\ dxdy\\&=C\int_{\cB}Tu(x)u(x)\ dx
\end{align}
where 
\begin{equation*}
Tu(x)=\int_{\cB}K(x,y)u(y)\ dy\quad\quad\text{with}\quad\quad K(x,y)=|x-y|^{2-N-2s}.
\end{equation*}
Moreover, by H$\ddot{\text{o}}$lder inequality,
\begin{equation}\label{schur-2}
\int_{\cB}Tu(x)u(x)\ dx\leq\|Tu\|_{L^2(\cB)}\|u\|_{L^2(\cB)}.
\end{equation}
Now, the Schur test implies that there is $C>0$ such that
\begin{equation}\label{schur-3}
\|Tu\|_{L^2(\cB)}\leq C\|u\|_{L^2(\cB)}.
\end{equation}
Therefore, inequality \eqref{r2} follows by combining \eqref{schur-1}, \eqref{schur-2} and \eqref{schur-3}.\\
On the other hand, by Lemmas \ref{l1} and \ref{l2}, there exists a positive constant $C>0$, depending on $\rho_1$ and $\rho_2$ with the property that
\begin{equation}\label{r3}
Q_{N,s,\cB}(\chi_iu)\geq S_{N,s}(\R^N)\|\chi_iu\|^2_{L^{2^*_s}(\cB)}-C\|\chi_iu\|^2_{L^2(\cB)}.
\end{equation}
Plugging \eqref{r2} and \eqref{r3} into \eqref{r1}, we find that
\begin{align}
Q_{N,s,\cB}(u)\geq S_{N,s}(\R^N)\sum_{i=1}^{2}\|\chi_iu\|^2_{L^{2^*_s}(\cB)}-C\sum_{i=1}^{2}\|\chi_iu\|^2_{L^2(\cB)}.
\end{align}
Next, since $\sum_{i=1}^{2}\chi^2_i=1$, we have
\begin{align*}
\sum_{i=1}^{2}\|\chi_iu\|^2_{L^{2^*_s}(\cB)}&=\sum_{i=1}^{2}\Big\|\chi^2_iu^2\Big\|_{L^{\frac{N}{N-2s}}(\cB)}\geq\Bigg\|\sum_{i=1}^{2}\chi^2_iu^2\Bigg\|_{L^{\frac{N}{N-2s}}(\cB)}\\&=\|u^2\|_{L^{\frac{N}{N-2s}}(\cB)}=\|u\|^2_{L^{2^*_s}(\cB)}.
\end{align*}
Using this in \eqref{r3}, it follows that
\begin{equation*}
Q_{N,s,\cB}(u)\geq S_{N,s}(\R^N)\|u\|^2_{L^{2^*_s}(\cB)}-C\|u\|^2_{L^2(\cB)},
\end{equation*}
completing the proof.
\end{proof}

\subsection{The case $2s<N<4s$}

We now  let  $G(x,y)$ be the Green function of $\Ds+h$, with zero exterior Dirichlet boundary data. Letting $G(x)=G(x,0)$, we have that  
\be\label{eq:Green}
\begin{cases}
\Ds G(x)+h(x)G(x)=\d_0(x)&\textrm{  in $\cB$}\\
~~~~~~~~~~~~~~~~~~~~~~~~G(x)=0& \textrm{  in $\R^N\setminus \cB$},
\end{cases}
\ee
where $\d_0$ is the Dirac mass at  $0$. We recall that $G$ is a radial function. In fact this follows from the construction and uniqueness of Green function.
We let $\calR(x)=t_{N,s} |x|^{2s-N}$ be the Riesz potential of $\Ds$ on $\R^N$. It satisfies 
\be \label{eq:riesz}
\Ds  \calR(x)=\d_0(x), 
\ee
where  $t_{N,s}:=\pi^{-\frac{N}{2}}2^{-s}\frac{\G((N-s)/2)}{\G(s/2)}$.    
We  now define  $\ov \k\in L^1(\cB)$, by 
\be
\ov \k(x):=G(x)- \calR(x). 
\ee
%
%
It then follows, from  \eqref{eq:Green},  that 
\be\label{eq:cLk}
\Ds\ov  \k(x)+h(x)\ov  \k(x)=-h(x)\calR(x).
\ee
  Since $N<4s$, we have that $\ov \k\in L^2(\cB)$ and  $h\calR\in L^p(\cB)\cap L^2(\cB)$, for some  $p>\frac{N}{2s}$.  Therefore, by regularity theory,  
$
\ov \k\in  C(\ov{\cB})  .
$
Recall that $\ov \k(y)$ is  the \textit{mass} of $\cB$  associated to the operator $\calL_{\R^N}:=\Ds+ h(x)$.    We remark that  if $ \chi\in C^\infty_c(\cB)$, with $\chi=1$ in a neighborhood of $0$, then  letting 
$$
 \k(x):=G(x)- \chi(x)\calR(x),
$$
then, by continuity,  $\k(y)=\ov\k (y)$, for all $y\in \cB$. This follows from the fact that $\Ds  \k+h  \k\in L^p(\cB)$, for some  $p>\frac{N}{2s}$ and thus $ \k\in C(\cB)$.
\begin{remark}
It would be interesting to find  potential  $h$ for which $  \k(0)>0$.
\end{remark}
First, for $\e>0$ we set
	\begin{equation*}
	u_{\e}(x)=\gamma_0\Big(\frac{\e}{\e^2+ |x|^2}\Big)^{\frac{N-2s}{2}},
	\end{equation*}
	where $\gamma_0$ is a positive constant (independent of $\epsilon$) such that $\|u_{\e}\|_{L^{2^*_s}(\R^N)}=1$. It is known that $u_{\e}$ satisfies the Euler-Lagrange equation
	\begin{equation}
	(-\Delta)^su_{\e}=S_{N,s} u^{2^*_s-1}_{\e}\quad\text{in}\quad\R^N.
	\end{equation}
Our next result shows that in low dimension $N<4s$, the positive mass implies existence of minimizers. 
\begin{lemma}\label{lem:strict-ineq-ld}
Suppose that $2s<N<4s$.  Suppose that $ \k(0)>0$. Then 
\be\label{key-inequality-low-dimension}
S_{N,s,rad}(\cB,h)<  S_{N,s} :=S_{N,s}(\R^N) .
\ee
\end{lemma}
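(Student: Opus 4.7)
The strategy is a positive-mass test function argument, adapted from Schoen's construction for the local Yamabe problem to the regional fractional setting. The plan is to exhibit a radial function $v_\e\in H^s_{0,rad}(\cB)$ whose Rayleigh quotient
\begin{equation*}
\calQ(v_\e):=\frac{Q_{N,s,\cB}(v_\e)+\int_\cB h\,v_\e^2\,dx}{\|v_\e\|_{L^{2^*_s}(\cB)}^2}
\end{equation*}
lies strictly below $S_{N,s}$ for $\e$ small, with the mass $\k(0)>0$ providing the decisive correction at order $\e^{N-2s}$. Fix a radial cutoff $\eta\in C^\infty_c(\cB)$ with $\eta\equiv 1$ on $B_{r_0}$ and $\supp\eta\subset B_{2r_0}$ for some $r_0\in(0,1/2)$, and set
\begin{equation*}
v_\e(x):=\eta(x)\,u_\e(x)+\frac{\gamma_0}{t_{N,s}}\,\e^{\frac{N-2s}{2}}(1-\eta(x))\,G(x).
\end{equation*}
Since $G$ vanishes outside $\cB$ and the factor $(1-\eta)$ removes the singularity of $G$ at the origin, $v_\e$ is radial and belongs to $H^s_{0,rad}(\cB)$. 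The coefficient $\gamma_0/t_{N,s}$ is chosen so that the asymptotics $u_\e(x)\sim\gamma_0\e^{(N-2s)/2}|x|^{2s-N}$ for $|x|\gg\e$ and $G(x)\sim t_{N,s}|x|^{2s-N}$ as $|x|\to 0$ match on the annulus $\{r_0<|x|<2r_0\}$.

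The next step is to expand $\calQ(v_\e)$ in powers of $\e$. Using the identity $Q_{N,s,\cB}(v_\e)=[v_\e]^2_{H^s(\R^N)}-\int_\cB\kappa_\cB v_\e^2\,dx$, the bubble equation $\Ds u_\e=S_{N,s}u_\e^{2^*_s-1}$ in $\R^N$, and $(\Ds+h)G=\d_0$ in $\cB$, the leading order in the numerator reproduces $S_{N,s}$. The first correction comes from inserting the splitting $G=\calR+\ov\k$ with $\ov\k\in C(\ov\cB)$ and $\ov\k(0)=\k(0)$ into the cross-interaction integrals between $\eta u_\e$ and $(1-\eta)G$ and into $\int_\cB h\,v_\e^2\,dx$: the Riesz piece $\calR$ combines with the tail of $u_\e$ to produce a ``vacuum'' term that vanishes by the Sobolev bubble identity (this is precisely the scale matching built into the coefficient $\gamma_0/t_{N,s}$), so only the continuous part $\ov\k$ survives, and since $u_\e$ concentrates at the origin the extracted coefficient is $\ov\k(0)=\k(0)$. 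A Brezis--Nirenberg--Jannelli type bubble expansion, in its fractional form, then yields
\begin{equation*}
\calQ(v_\e)=S_{N,s}-C\,\k(0)\,\e^{N-2s}+o(\e^{N-2s})
\end{equation*}
for some explicit $C>0$; with $\k(0)>0$ this is strictly less than $S_{N,s}$ for $\e$ small, proving the lemma.

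The dimensional restriction $2s<N<4s$ is used precisely to isolate the mass as the leading correction: in this range $N-2s<2s$, and all other error contributions --- the commutator of $\Ds$ with $\eta$, the potential term $\int h(\eta u_\e)^2$, and the $L^{2^*_s}$ denominator remainder --- are of order strictly lower than $\e^{N-2s}$, so the sign of $\k(0)$ can be read off the expansion unambiguously. The principal obstacle is this asymptotic bookkeeping, especially the long-range nonlocal cross terms in $[v_\e]^2_{H^s(\R^N)}$ that reach across the cutoff annulus, and the careful identification of the coefficient of $\e^{N-2s}$ as a strictly negative constant multiplied by $\k(0)$; this relies on the continuity of $\ov\k=G-\calR$ on $\ov\cB$, which is guaranteed precisely by the low-dimension regime $N<4s$ ensuring $h\calR\in L^p(\cB)$ with $p>N/(2s)$, as used in the regularity discussion preceding the lemma.
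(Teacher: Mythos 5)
Your strategy is the paper's (a Schoen-type positive-mass test function: bubble glued to the Green function with the matching constant $a_s:=\gamma_0/t_{N,s}$, then an expansion of the Rayleigh quotient at order $\e^{N-2s}$), but your test function $v_\e=\eta u_\e+\e^{\frac{N-2s}{2}}a_s(1-\eta)G$ differs from the paper's $v_\e=\eta u_\e+\e^{\frac{N-2s}{2}}a_s(G-\eta\calR)$ exactly by the term $-\e^{\frac{N-2s}{2}}a_s\,\eta\,\ov\k$, and this difference invalidates the mechanism you describe. Since $(1-\eta)G$, and hence its regular part $(1-\eta)\ov\k$, vanishes identically on $B_{r_0}$ where $u_\e$ concentrates, no local cross-interaction integral can ``extract $\ov\k(0)$ by concentration'': for instance the $L^{2^*_s}$ cross term $2^*_s\e^{\frac{N-2s}{2}}a_s\int(\eta u_\e)^{2^*_s-1}(1-\eta)G\,dx$ is supported in $\{|x|\ge r_0\}$ and is $O(\e^{N})=o(\e^{N-2s})$, so with your choice the denominator carries no mass term at order $\e^{N-2s}$. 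In the paper's computation the mass enters the denominator precisely through $2^*_s\e^{\frac{N-2s}{2}}a_s\int(\eta u_\e)^{2^*_s-1}\k\,dx\approx 2^*_s\e^{N-2s}\frac{a_s^2}{S_{N,s}}\k(0)$ (there $\k$ does not vanish at the origin), and it is the combination of a numerator gain $+a_s^2\k(0)\e^{N-2s}$ with a denominator gain twice as large that produces the negative sign. With your function the entire contribution $-a_s^2\k(0)\e^{N-2s}$ must instead come from the long-range nonlocal cross term between the concentrated bubble and the Green-function profile living at distance $\ge r_0$; this is exactly the delicate computation you defer, and your sketch gives no argument that its coefficient is $\k(0)$ up to $o_{r_0}(1)$ rather than some uncontrolled $r_0$-dependent quantity.

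There is a second concrete gap in the bookkeeping: your claim that the cutoff commutator terms, the potential term $\int_\cB h(\eta u_\e)^2\,dx$, and the denominator remainder are of order strictly lower than $\e^{N-2s}$ is false in the range $N<4s$. One has $\int_\cB h(\eta u_\e)^2\,dx=O\big(\e^{N-2s}r_0^{4s-N}\big)$, and commutator contributions such as the interaction of the bubble tail with $\Ds\eta$ are of order exactly $\e^{N-2s}$ with $r_0$-dependent constants; they become harmless only after the cancellation of the bubble tail against the Riesz potential, which the paper implements by working with $\ov W_\e=u_\e-\e^{\frac{N-2s}{2}}a_s\calR$ together with the $J_\e$ estimates, and after a two-parameter limit yielding $S_{N,s}-a_s^2\k(0)\e^{N-2s}+o(\e^{N-2s})+O(\e^{N-2s})o_r(1)$, with $r$ chosen small before letting $\e\to0$. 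The final expansion you assert is in fact correct --- one can check that your $v_\e$ and the paper's give the same $\e^{N-2s}$-coefficient, the mass merely migrating from the denominator to the numerator --- but as written the proposal neither identifies the true source of the $\k(0)$ term for your test function nor controls the error terms that are genuinely of the same order; these two points constitute the substance of the proof.
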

\begin{proof}
For $r\in (0,1/4)$,  we let $\eta\in C^\infty_c(B_{2r})$ be radial,  with $\eta=1$ on $B_{r}$.  
We define the test function $v_\e\in H^s_{0,rad}(\cB)$ given by 
\begin{align}
v_\e (x)&=\eta(x) u_\e(x)+\e^{\frac{N-2s}{2}}\frac{\g_0}{t_{N,s}} \left( G(x)- \eta(x)\calR(x) \right) \nonumber \\
&=\eta(x) u_\e(x)+\e^{\frac{N-2s}{2}}\frac{\g_0}{t_{N,s}}  \k(x)  .
\end{align}
%
%
We define $W_\e:=\eta u_\e-\e^{\frac{N-2s}{2}}\frac{\g_0}{t_{N,s}}  \eta\calR$ and $a_s:=\frac{\g_0}{t_{N,s}} $. 
%

Note that  $\e^{-\frac{N-2s}{2}} W_\e\to 0 \in C_{loc}(\R^N\setminus\{0\})\cap L^1(\cB)$ and $|\e^{-\frac{N-2s}{2}} u_\e (x)|\leq \g_0 |x|^{2s-N}$. Hence, since $N<4s$, we deduce that  $|x|^{2(2s-N)}\in L^1_{loc}(\R^N)$ and thus by the dominated convergence theorem,
 \be
 \int_{\cB} u_\e(x) h(x)W_\e(x)\, dx= o(\e^{N-2s}).
\ee
We then have 
\begin{align*}
& [v_\e]^{2}_{H^s(\cB)}+ \int_{\cB}hv_\e^2\, dx\leq   [v_\e]^{2}_{H^s(\R^N)}+ \int_{\cB}hv_\e^2\, dx=   \int_{\cB} v_\e(x) \calL_{\R^N} v_\e(x)\, dx\\
& \leq  \e^{\frac{N-2s}{2}}a_s  \int_{\cB} v_\e(x) \calL_{\R^N}    G(x) \, dx  +  \int_{\cB} v_\e(x) \calL_{\R^N} W_\e(x)\, dx \\
& \leq   \e^{\frac{N-2s}{2}}a_s  u_\e(0)+  \e^{{N-2s}}a_s^2 \k(0) +   \int_{\cB}\eta u_\e(x) \Ds W_\e(x)\, dx\\
&+   \e^{\frac{N-2s}{2}} a_s\int_{\cB}\k(x) \calL_{\R^N} W_\e(x)\, dx+ o(\e^{N-2s})\\
%
%
&\leq   \e^{\frac{N-2s}{2}} a_s  u_\e(0)+ \e^{{N-2s}}a_s^2  \k(0) +   \int_{\cB}\eta u_\e(x) \Ds  (\eta u_\e)(x) \, dx\\
&-  \e^{\frac{N-2s}{2}} a_s \int_{\cB}\eta u_\e(x) \Ds  ( \eta\calR)(x)\, dx +  \e^{\frac{N-2s}{2}} a_s\int_{\cB}\k(x) \calL_{\R^N} W_\e(x)\, dx + o(\e^{N-2s}) \\
&\leq   \e^{\frac{N-2s}{2}} a_s  u_\e(0)+ \e^{{N-2s}}a_s^2  \k(0) \\
&+   \int_{\R^N}\eta u_\e(x) \Ds (\eta u_\e)(x) \, dx -  \e^{\frac{N-2s}{2}} a_s \int_{\R^N}\eta u_\e(x) \Ds   ( \eta\calR)(x)\, dx  \\
&    +  \e^{\frac{N-2s}{2}} a_s\int_{\R^N}\k(x) \calL_{\R^N}  W_\e(x)\, dx+ o(\e^{N-2s}).
\end{align*}
Letting $\ov W_\e=u_\e- \e^{\frac{N-2s}{2}} a_s \calR(x)$, since $N<4s$,  we have that
\be\label{eq:ovWe-conve}
\e^{-\frac{N-2s}{2}} \ov W_\e \to 0\qquad\textrm{ in $C^1_{loc}(\R^N\setminus\{0\})\cap \calL^1_s\cap L^2_{loc}(\R^N)$}.
\ee
Therefore, using that $\Ds\calR=\d_0$ and $\Ds u_\e=S_{N,s} u_\e^{2^*_s-1}$, we get 
 \begin{align*}
& \e^{\frac{N-2s}{2}} a_s  u_\e(0)+  \int_{\R^N}\eta u_\e(x) \Ds (\eta u_\e)(x) \, dx-  \e^{\frac{N-2s}{2}} a_s  \int_{\R^N}\eta u_\e(x) \Ds   ( \eta\calR)(x)\, dx\\
& =\e^{\frac{N-2s}{2}} a_s  u_\e(0) + \int_{\R^N}\eta^2 u_\e(x) \Ds  u_\e(x) \, dx - \e^{\frac{N-2s}{2}} a_s \int_{\R^N}\eta u_\e(x) \Ds  \calR(x) \, dx\\
&+  \int_{\R^N}\eta u_\e(x) \ov W_\e(x) \Ds  \eta(x) \, dx   - \int_{B_{2r}}\eta u_\e(x) J_\e(x)dx\\
&=S_{N,s}\int_{\R^N}\eta^2 u_\e^{2^*_s}+  \int_{\R^N}\eta u_\e(x) \ov W_\e(x) \Ds  \eta(x) \, dx   - \int_{B_{2r}}\eta u_\e(x)J_\e(x) dx\\
&=S_{N,s}\int_{\R^N}\eta^2 u_\e^{2^*_s}+ o( \e^{N-2s}) -\int_{B_{2r}}\eta u_\e(x)  J_\e(x) dx,
 \end{align*}
 where $ J_\e(x):={c_{N,s}} \int_{\R^{N}}\frac{(\overline{W}_\e(x)-\overline{W}_\e(y))(     \eta(x)-\eta(y))}{|x-y|^{N+2s}}\, dy$. To estimate $J_\e$,   we consider first  $x\in B_{r/2}$ and thus 
 \begin{align*}
 J_\e(x)= c_{N,s}\int_{|y|>r}\frac{(\overline{W}_\e(x)-\overline{W}_\e(y))(     \eta(x)-\eta(y))}{|x-y|^{N+2s}}\, dy=o( \e^{\frac{N-2s}{2}}) O( |x|^{\frac{N-2s}{2}}) .
 \end{align*}
 If now $|x|\geq r/2$, we estimate 
 \begin{align*}
| J_\e(x)|&\leq c_{N,s}\int_{|y|<{r/4}}\frac{|(\overline{W}_\e(x)-\overline{W}_\e(y))(     \eta(x)-\eta(y))|}{|x-y|^{N+2s}}\, dy\\
&~~~~~~~~~~~~~~~~~~~~+c_{N,s} \int_{|y|>{r/4}}\frac{|(\overline{W}_\e(x)-\overline{W}_\e(y))(     \eta(x)-\eta(y))|}{|x-y|^{N+2s}}\, dy\\
&\leq  o( \e^{\frac{N-2s}{2}})+ \|\n \eta\|_{L^\infty(\R^N)}   \int_{4r>|y|>{r/4}}\frac{\sup_{t\in [0,1]}|\n \overline{W}_\e(\g_{x,y}(t))| |\g'_{x,y}(t)| }{|x-y|^{N+2s-1}}\, dy\\
&=o( \e^{\frac{N-2s}{2}})
 \end{align*}
 where $\g_{x,y}:[0,1]\to B_{r/2}\setminus B_{r/4}$ is the $C^1$  shortest curve satisfying $\g_{x,y}(0)=x$,  $\g_{x,y}(1)=y$ and $\sup_{t\in [0,1]} |\g'_{x,y}(t)|\leq C |x-y|$. 
Since $N<4s$, by \eqref{eq:cLk} and \eqref{eq:ovWe-conve},  we have  
 \begin{align*}
&  \left|\int_{\R^N}\k(x) \calL_{\R^N}  W_\e(x)\, dx \right|\leq   \left|\int_{B_{2r}}|\calL_{\R^N}\k(x)|  |W_\e(x)|\, dx \right|   =o(  \e^{\frac{N-2s}{2}} ) .
 \end{align*}
 We thus conclude that 
 \begin{align}\label{eq:sem-norm-v-eps}
 [v_\e]^{2}_{H^s(\cB)}+ \int_{\cB} h v_\e^2\, dx&\leq S_{N,s}\int_{\R^N}\eta^2 u_\e^{2^*_s}+  \e^{{N-2s}}a_s^2  \k(0)+o( \e^{{N-2s}})+ O( \e^{{N-2s}})o_r(1) \nonumber\\
 &\leq S_{N,s}+  \e^{{N-2s}}a_s^2  \k(0)+o( \e^{{N-2s}})+ O(r^{4s-N} \e^{{N-2s}}).
 \end{align}
  Since $2^*_s>2$, there exists a positive constant $C(N,s)$  such that
\begin{equation*}
||a+b|^{2^*_s}-|a|^{2^*_s}-2^*_s ab |a|^{2^*_s-2}| \leq C(N,s) \left(|a|^{2^*_s-2} b^2+|b|^{2^*_s}\right)\qquad\textrm{  for all $a,b \in \R$.}
\end{equation*}
As a consequence, with $a=\eta(x) u_\e(x)$ and $b=\e^{\frac{N-2s}{2}} a_s\k(x) $,  we obtain
 \begin{align*}
&\int_{\cB}v_\e^{2^*_s}- \int_{\R^N}(\eta u_\e)^{2^*_s}=2^*_s \e^{\frac{N-2s}{2}} a_s \int_{\cB} (\eta u_\e)^{2^*_s-1}\k(x)\, dx\\
&+ o( \e^{{N-2s}})+ O\left( \e^{N-2s} \int_{\R^N}| \eta(x) u_\e(x)|^{2^*_s-2} \k^2(x)dx \right)\\
&=2^*_s \e^{\frac{N-2s}{2}} \frac{a_s}{S_{N,s}} \int_{\cB} \eta ^{2^*_s-1}\k(x) \Ds u_\e\, dx+ o( \e^{{N-2s}})+\e^{{N-2s}} O\left(\|\eta u_\e\|_{L^{2^*_s}(B_{2r})}^{{2^*_s-2}} \|\k\|_{L^{2^*_s}(B_{2r})}^{{2}} \right). \\
%
%
%
&=2^*_s \e^{\frac{N-2s}{2}} \frac{a_s}{S_{N,s}} \int_{\cB} \k(x) \Ds  \overline{W}_\e\, dx+ 2^*_s \e^{\frac{N-2s}{2}} \frac{a_s}{S_{N,s}} \int_{\cB}(\eta ^{2^*_s-1}-1)\k(x) \Ds  \overline{W}_\e\, dx\\
&+ 2^*_s \e^{{N-2s}} \frac{a_s^2}{S_{N,s}}\k(0)+ o( \e^{{N-2s}})+ O(\e^{{N-2s}} r^{N-2s} ) \\
&=2^*_s \e^{\frac{N-2s}{2}} \frac{a_s}{S_{N,s}} \int_{\cB}  \overline{W}_\e(x) \calL_{\R^N}\k(x)   \, dx+ 2^*_s \e^{\frac{N-2s}{2}} \frac{a_s}{S_{N,s}} \int_{\cB}(\eta ^{2^*_s-1}-1)\k(x) \Ds  \overline{W}_\e\, dx\\
&+ 2^*_s \e^{{N-2s}} \frac{a_s^2}{S_{N,s}}\k(0)+ o( \e^{{N-2s}}) + O(\e^{{N-2s}} r^{N-2s} )\\
&=2^*_s \e^{{N-2s}} \frac{a_s^2}{S_{N,s}}O\left( \int_{|x|<2r} |x|^{2s-N}\left(\frac{1}{(\e^2+|x|^2)^{\frac{N-2s}{2}}} -\frac{1}{|x|^{N-2s}} \right)  \, dx\right)\\
&+ 2^*_s \e^{\frac{N-2s}{2}} \frac{a_s}{S_{N,s}} \int_{\cB}(\eta ^{2^*_s-1}-1)\k(x) \Ds \ov W_\e\, dx + 2^*_s \e^{{N-2s}} \frac{a_s^2}{S_{N,s}}\k(0)+ o( \e^{{N-2s}})+O(\e^{{N-2s}} ) o_r(1).
 \end{align*}
 We estimate
 \begin{align*}
 &\int_{\cB}(\eta ^{2^*_s-1}-1)\k(x) \Ds \ov W_\e\, dx= \int_{\cB}(\eta ^{2^*_s-1}-1)\k(x) \Ds(\eta_{r/4} \ov W_\e)\, dx+o(   \e^{\frac{N-2s}{2}} ) \\ 
 &=c_{N,s}\int_{|x|\geq r}(1-\eta ^{2^*_s-1}(x))\k(x)\int_{ |y|<r/2}\frac{\eta_{r/4} (y)\overline{W}_\e(y)\, dy}{|x-y|^{N+2s}}\, dy +o(   \e^{\frac{N-2s}{2}} )=o(   \e^{\frac{N-2s}{2}} ).
 \end{align*}
 Here, from the definition of $\eta$, we define $\eta_{r/4}\in C^{\infty}_c(B_{r/2})$ with $\eta_{r/4}=1$ on $B_{r/4}$. From the above estimates, we then obtain
\begin{align*}
 \int_{\cB}v_\e^{2^*_s}&=  \int_{\R^N}(\eta u_\e)^{2^*_s}  + 2^*_s \e^{{N-2s}} \frac{a_s^2}{S_{N,s}}\k(0)+ o( \e^{{N-2s}})+O(\e^{{N-2s}} ) o_r(1)\\
 &=1+ 2^*_s \e^{{N-2s}} \frac{a_s^2}{S_{N,s}}\k(0)+ o( \e^{{N-2s}})+O(\e^{{N-2s}} ) o_r(1).
 \end{align*}
 Combining this with \eqref{eq:sem-norm-v-eps}, we finally get
$$
\frac{  [v_\e]_{H^s(\cB)}^2+ \int_{\cB}hv_\e^2\, dx}{\|v_\e\|_{L^{2^*_s}(\cB)}^2}\leq S_{N,s}-  \e^{{N-2s}} {a_s^2} \k(0)+ o( \e^{{N-2s}})+O(\e^{{N-2s}} ) o_r(1).
$$
This finishes the proof.
 \end{proof}

\section{Existence of radial minimizers}\label{section:radial minimizers}
The goal of this section is to investigate the existence of a radial solution of problem \eqref{critical-problem-for-regional-0} in the case when $\Omega=\cB$ is the unit ball of $\R^N$, $N>2s$. More precisely, we aim to analyze the attainability of the following radial critical level
\begin{equation}\label{radial-critical-level-in-the-unit-ball-1}
S_{N,s,rad}(\cB,h)=\inf_{\substack{u\in H^s_{0,rad}(\cB)\\ u\neq0}}\frac{Q_{N,s,\cB}(u)+\int_{\cB}hu^2\, dx}{\|u\|^2_{L^{2^*_s}(\cB)}}.
\end{equation}
To this end, we make use of the method of missing mass as in \cite{frank2018minimizers}. The idea is to prove that a minimizing sequence  for $S_{N,s,rad}(\cB,h)$ does not concentrate at the origin. For that, we will exploit inequalities \eqref{key-assumption-for-radial-minimization-on-the-unit-ball} and \eqref{key-inequality-low-dimension} respectively for high $(N\geq4s)$ and low $(2s<N<4s)$ dimensions.

For the reader's convenience, we restate the main result of this subsection in the following.
\begin{thm}\label{main-result-for-radial-minimization-probem}
	Let $s\in(1/2,1)$,  $N>2s$ and $h\in L^\infty(\cB)$ be a radial function.  Suppose that 
	$0<S_{N,s,rad}(\cB,h)<S_{N,s}(\R^N). $
	Then any minimizing sequence for $S_{N,s,rad}(\cB)$, normalized in $H^s_{0,rad}(\cB)$ is relatively compact in $H^s_{0,rad}(\cB)$ . In particular, the infimum is achieved.
\end{thm}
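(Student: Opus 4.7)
The strategy is the missing-mass argument in the spirit of \cite{frank2018minimizers}: perform a Brézis--Lieb splitting of the minimizing sequence around its weak limit, control the remainder by Proposition \ref{key-prop-3}, and exploit the strict gap $S:=S_{N,s,rad}(\cB,h)<S_{N,s}(\R^N)$ to prevent a loss of mass. Let $(u_n)\subset H^s_{0,rad}(\cB)$ be a minimizing sequence; after rescaling I may assume $\|u_n\|_{L^{2^*_s}(\cB)}=1$ and $Q_{N,s,\cB}(u_n)+\int_{\cB} h u_n^2\,dx\to S$. Since $h\in L^\infty(\cB)$ and the sequence is thereby bounded in $H^s_{0,rad}(\cB)$, along a subsequence $u_n\rightharpoonup u$ weakly in $H^s_{0,rad}(\cB)$ and, by the compact embedding of Proposition \ref{sobolev-embedding}, strongly in $L^2(\cB)$; in particular $\int h u_n^2\to\int h u^2$. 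Setting $v_n:=u_n-u$, the standard Brézis--Lieb identities applied to the Gagliardo seminorm and to $\|\cdot\|_{L^{2^*_s}}^{2^*_s}$ give
\[
Q_{N,s,\cB}(u_n)=Q_{N,s,\cB}(u)+Q_{N,s,\cB}(v_n)+o(1),\qquad 1=\|u\|_{L^{2^*_s}}^{2^*_s}+\|v_n\|_{L^{2^*_s}}^{2^*_s}+o(1).
\]

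Setting $a:=\|u\|_{L^{2^*_s}}^{2^*_s}$ and $b:=\lim_n\|v_n\|_{L^{2^*_s}}^{2^*_s}$ along a further subsequence, so that $a+b=1$, I combine two lower bounds. On the one hand, $u\in H^s_{0,rad}(\cB)$ is a valid competitor for $S$, hence $Q_{N,s,\cB}(u)+\int h u^2\geq S\,a^{2/2^*_s}$ (trivially if $u\equiv 0$). On the other hand, Proposition \ref{key-prop-3} applied to $v_n$, together with $\|v_n\|_{L^2}\to 0$, yields
\[
Q_{N,s,\cB}(v_n)\geq S_{N,s}(\R^N)\,\|v_n\|_{L^{2^*_s}}^{2}+o(1).
\]
Passing to the limit in the minimizing-sequence relation and using the Brézis--Lieb decompositions then gives
\[
S\;\geq\;S\,a^{2/2^*_s}+S_{N,s}(\R^N)\,b^{2/2^*_s}.
\]

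Since $2/2^*_s<1$, the map $t\mapsto t^{2/2^*_s}$ is subadditive on $[0,\infty)$, so $a^{2/2^*_s}+b^{2/2^*_s}\geq(a+b)^{2/2^*_s}=1$, and the display above rearranges to
\[
S\;\geq\;S+\bigl(S_{N,s}(\R^N)-S\bigr)\,b^{2/2^*_s}.
\]
The strict inequality $S<S_{N,s}(\R^N)$ forces $b=0$, so $\|v_n\|_{L^{2^*_s}}\to 0$ and $\|u\|_{L^{2^*_s}}=1$. In particular $u\not\equiv 0$, so $u$ is a genuine competitor with $Q_{N,s,\cB}(u)+\int h u^2\geq S$; combining this with $Q_{N,s,\cB}(u)+\int h u^2+\lim_n Q_{N,s,\cB}(v_n)=S$ forces $Q_{N,s,\cB}(v_n)\to 0$, i.e.\ $v_n\to 0$ in $H^s_{0,rad}(\cB)$. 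Hence the minimizing sequence converges strongly to $u$ and $u$ attains $S$.

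The only genuinely nontrivial analytic input is Proposition \ref{key-prop-3}, which supplies the sharp Euclidean Sobolev constant as the leading coefficient in the lower bound for $Q_{N,s,\cB}(v_n)$; the subadditivity step then pits this constant against $S$ through the strict gap. Theorems \ref{existence-of-radial-minimizers} and \ref{existence-of-radial-minimizers-ld} follow by verifying the strict-inequality hypothesis: in the range $N\geq 4s$ with $h\equiv 0$ it is provided by Proposition \ref{key-prop-1}, and in the range $2s<N<4s$ it is provided by Lemma \ref{lem:strict-ineq-ld} under the positive mass assumption $\k(0)>0$. Positivity of the resulting radial minimizer is then a consequence of $Q_{N,s,\cB}(|u|)\leq Q_{N,s,\cB}(u)$ together with a strong maximum principle for $(-\Delta)^s_{\cB}+h$.
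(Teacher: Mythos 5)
Your argument is correct and essentially coincides with the paper's proof: the same weak-limit/Brézis--Lieb splitting of the minimizing sequence, the same lower bound on the remainder coming from Proposition \ref{key-prop-3} (which the paper merely repackages as the quantity $S^*_{N,s,rad}(\cB)$ in Proposition \ref{key-prop-2}), and the same subadditivity of $t\mapsto t^{(N-2s)/N}$ played against the strict gap $S_{N,s,rad}(\cB,h)<S_{N,s}(\R^N)$ to rule out loss of mass. The only cosmetic difference is your $L^{2^*_s}$-normalization in place of the paper's $H^s_{0,rad}(\cB)$-normalization, which is equivalent after rescaling because $S_{N,s,rad}(\cB,h)>0$ keeps the $L^{2^*_s}$-norms of an $H^s$-normalized minimizing sequence bounded away from $0$ and $\infty$.
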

To prove the above theorem, we first collect some useful results. Let's introduce
\begin{equation}\label{S-start-2}
S^*_{N,s,rad}(\cB):=\inf\Big\{\liminf_{k\rightarrow\infty}\|u_k\|^{-2}_{L^{2^*_s}(\cB)}:Q_{N,s,\cB}(u_k)=1,~u_k\rightharpoonup 0~\text{in}~H^s_{0,rad}(\cB)\Big\}.
\end{equation}
We have the following interesting one-sided inequality.
\begin{prop}\label{key-prop-2}
	Let $1/2<s<1$ and $N\geq2$. Then
	\begin{equation}
	S^*_{N,s,rad}(\cB)\geq S_{N,s}(\R^N).
	\end{equation}
\end{prop}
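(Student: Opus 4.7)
My plan is to derive the bound directly from Proposition \ref{key-prop-3} combined with the compact embedding into $L^2$. Let $(u_k)\subset H^s_{0,rad}(\cB)$ be an arbitrary admissible sequence, i.e.\ $Q_{N,s,\cB}(u_k)=1$ and $u_k\rightharpoonup 0$ in $H^s_{0,rad}(\cB)$. In particular $(u_k)$ is bounded in $H^s_0(\cB)$, so by the compact embedding $H^s_0(\cB)\hookrightarrow L^2(\cB)$ provided by Proposition \ref{sobolev-embedding}, we have $u_k\to 0$ strongly in $L^2(\cB)$, and hence $\|u_k\|_{L^2(\cB)}^2\to 0$.

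Next, Proposition \ref{key-prop-3} applied to each $u_k$ yields
\begin{equation*}
1 \;=\; Q_{N,s,\cB}(u_k) \;\geq\; S_{N,s}(\R^N)\,\|u_k\|_{L^{2^*_s}(\cB)}^2 \;-\; C_{\cB}\,\|u_k\|_{L^2(\cB)}^2,
\end{equation*}
so that
\begin{equation*}
\|u_k\|_{L^{2^*_s}(\cB)}^2 \;\leq\; \frac{1+C_{\cB}\,\|u_k\|_{L^2(\cB)}^2}{S_{N,s}(\R^N)}.
\end{equation*}
Passing to the $\limsup$ and using the $L^2$-convergence to zero gives $\limsup_k \|u_k\|_{L^{2^*_s}(\cB)}^2 \leq 1/S_{N,s}(\R^N)$, which is equivalent to
\begin{equation*}
\liminf_{k\to\infty} \|u_k\|_{L^{2^*_s}(\cB)}^{-2} \;\geq\; S_{N,s}(\R^N).
\end{equation*}
(If some subsequence has $\|u_k\|_{L^{2^*_s}(\cB)}\to 0$, then $\|u_k\|_{L^{2^*_s}(\cB)}^{-2}\to\infty$ and the inequality is trivial on that subsequence; otherwise the displayed bound applies directly.) Taking the infimum over all admissible sequences $(u_k)$ in the definition \eqref{S-start-2} of $S^*_{N,s,rad}(\cB)$ finishes the proof.

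There is no real obstacle here; the whole point is that Proposition \ref{key-prop-3} has absorbed the difficulty by isolating a non-compact "critical" term and a lower-order, compactly controlled $L^2$ remainder. The only thing to verify is that the compact embedding $H^s_0(\cB)\hookrightarrow L^2(\cB)$ is available for radial weakly-null sequences, which is immediate from Proposition \ref{sobolev-embedding} since radiality is preserved by weak limits and the compactness holds on the full space $H^s_0(\cB)$.
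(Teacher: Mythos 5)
Your proof is correct and follows essentially the same route as the paper: apply Proposition \ref{key-prop-3} to each $u_k$, use the compact embedding into $L^2(\cB)$ to kill the remainder term for the weakly-null sequence, and pass to the limit to get $\liminf_k\|u_k\|_{L^{2^*_s}(\cB)}^{-2}\geq S_{N,s}(\R^N)$. The small clarifications you add (the degenerate case $\|u_k\|_{L^{2^*_s}(\cB)}\to 0$ and the availability of compactness for radial sequences) are fine and do not change the argument.
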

%
\begin{proof} 
	Let $(u_k)\subset H^s_{0,rad}(\cB)$ with $Q_{N,s,\cB}(u_k)=1$ and $u_k\rightharpoonup0$ in $H^s_{0,rad}(\cB)$. Then by Proposition \ref{key-prop-3} there is $C_{\cB}>0$ such that
	\begin{align*}
	Q_{N,s,\cB}(u_k)\geq S_{N,s}(\R^N)\|u_k\|^2_{L^{2^*_s}(\cB)}-C_{\cB}\|u_k\|^2_{L^2(\cB)}.
	\end{align*}
	By the compact embedding $H^s_{0,rad}(\cB)\hookrightarrow L^2(\cB)$, we have $u_k\rightarrow0$ in $L^2(\cB)$. Using this and by passing to the limit in the above inequality, we find that
	\begin{align*}
	1\geq S_{N,s}(\R^N)\limsup_{k\rightarrow\infty}\|u_k\|^2_{L^{2^*_s}(\cB)},
	\end{align*}
	that is,
	\begin{equation*}
	\liminf_{k\rightarrow\infty}\|u_k\|^{-2}_{L^{2^*_s}(\cB)}\geq S_{N,s}(\R^N).
	\end{equation*}
From the above inequality, we conclude the proof.
\end{proof}

Having collected the above results, we are ready to prove our main result.
\begin{proof}[Proof of Theorem \ref{main-result-for-radial-minimization-probem}]
	Let $(u_k)$ be a minimizing sequence for $S_{N,s,rad}(\cB,h)$, which is normalized in $H^s_{0,rad}(\cB)$. Then after passing to a subsequence, there is $u\in H^s_{0,rad}(\cB)$ such that
	\begin{equation}
	\begin{aligned}
	&u_k\rightharpoonup u\quad\text{weakly in}~~H^s_{0,rad}(\cB)\\
	&u_k\rightarrow u\quad\text{strongly in}~~L^2(\cB)\\
	&u_k\rightarrow u\quad\text{a.e. in}~~\cB.
	\end{aligned}
	\end{equation} 
 Now, by setting $w_k=u_k-u$, it follows that $w_k\rightharpoonup 0$ weakly in $H^s_{0,rad}(\cB)$. Using this, we have that
	\begin{equation}
	1=Q_{N,s,\cB,h}(u_k):=Q_{N,s,\cB}(u_k)+\int_{\cB}hu_k^2\,dx =Q_{N,s,\cB,h}(u)+Q_{N,s,\cB}(w_k)+o(1),
	\end{equation}
	where  $Q_{N,s,\cB,h}(u):=Q_{N,s,\cB}(u)+\int_{\cB}hu^2\,dx $.
	From the above identities, we see that $Q_{N,s,\cB}(w_k)$ converges, say, to $R_1$, which satisfies according to the above equality,
	\begin{equation}\label{equality-r1}
	1=Q_{N,s,\cB,h}(u)+R_1.
	\end{equation}
	Moreover, using that $u_k\rightarrow u$ a.e. in $\cB$ and the Brezis-Lieb lemma \cite{brezis1983relation}, we get that
	\begin{equation}
	S_{N,s,rad}(\cB,h)^{-\frac{N}{N-2s}}+o(1)=\|u_k\|^{\frac{2N}{N-2s}}_{L^{2^*_s}(\cB)}=\|u\|^{\frac{2N}{N-2s}}_{L^{2^*_s}(\cB)}+\|w_k\|^{\frac{2N}{N-2s}}_{L^{2^*_s}(\cB)}+o(1),
	\end{equation}
	from which we deduce that $\int_{\cB}|w_k|^{\frac{2N}{N-2s}}\ dx$ converges, say, to $R_2$ satisfying
	\begin{equation}\label{equality-r2}
	S_{N,s,rad}(\cB,h)^{-\frac{N}{N-2s}}=\|u\|^{\frac{2N}{N-2s}}_{L^{2^*_s}(\cB)}+R_2.
	\end{equation}	
	Now by Proposition \ref{key-prop-2} we easily see that
	\begin{equation}\label{inequality-r3}
	R_1\geq S_{N,s}(\R^N)R_2^{\frac{N-2s}{N}}.
	\end{equation}
The above inequality follows immediately if $R_2=0$. Otherwise, if $R_2>0$, then it suffices to use $\tilde{w}_k:=w_k/Q_{N,s,\cB}(w_k)^{1/2}$ in the definition of $S^*_{N,s,rad}(\cB)$ since $\tilde{w}_k\rightharpoonup0$ weakly in $H^s_{0,rad}(\cB)$ and $Q_{N,s,\cB}(\tilde{w}_k)=1$ as well.\\

From \eqref{equality-r1}, \eqref{equality-r2}, \eqref{inequality-r3} and by using the elementary inequality  \footnote{$0\leq b\leq a\Rightarrow0\leq b/a\leq1$ and then $0\leq b/a\leq(b/a)^{\alpha}\leq1$ for all $0\leq\alpha\leq1$. Hence,
		\begin{equation*}
		\frac{a^{\alpha}-b^{\alpha}}{(a-b)^{\alpha}}=\frac{1-(b/a)^{\alpha}}{(1-(b/a))^{\alpha}}\leq\frac{1-(b/a)}{(1-(b/a))^{\alpha}}\leq1.
		\end{equation*}
	}
	\begin{equation}
	(a-b)^{\alpha}\geq a^{\alpha}-b^{\alpha}\quad\text{for}~0\leq\alpha\leq1,~a\geq b\geq0
	\end{equation}
	with $\alpha=(N-2s)/N$, we find that
	\begin{align*}
	1&=Q_{N,s,\cB,h}(u)+R_1\\
	&\geq Q_{N,s,\cB,h}(u)+S_{N,s}(\R^N)R_2^{\frac{N-2s}{N}}\\
	&=Q_{N,s,\cB,h}(u)+(S_{N,s}(\R^N)-S_{N,s,rad}(\cB,h))R^{\frac{N-2s}{N}}_2\\
	&\ \ \ \ \ \ \ \ \ \ \ +S_{N,s,rad}(\cB)\Big(S_{N,s,rad}(\cB,h)^{-\frac{N}{N-2s}}-\|u\|^{\frac{2N}{N-2s}}_{L^{2^*_s}(\cB)}\Big)^{\frac{N-2s}{N}}\\
	 &\geq Q_{N,s,\cB,h}(u)+(S_{N,s}(\R^N)-S_{N,s,rad}(\cB,h))R^{\frac{N-2s}{N}}_2\\ &\ \ \ \ \ \ \ \ \ \ \ +S_{N,s,rad}(\cB,h)\Big(S_{N,s,rad}(\cB,h)^{-1}-\|u\|^{2}_{L^{2^*_s}(\cB)}\Big)\\
	  &=Q_{N,s,\cB,h}(u)+(S_{N,s}(\R^N)-S_{N,s,rad}(\cB,h))R^{\frac{N-2s}{N}}_2+1-S_{N,s,rad}(\cB,h)\|u\|^{2}_{L^{2^*_s}(\cB)}.
	\end{align*}
	Thus,
	\begin{align}\label{t}
	Q_{N,s,\cB,h}(u)-S_{N,s,rad}(\cB,h)\|u\|^{2}_{L^{2^*_s}(\cB)}+(S_{N,s}(\R^N)-S_{N,s,rad}(\cB,h))R^{\frac{N-2s}{N}}_2\leq0.
	\end{align}
	Since $Q_{N,s,\cB,h}(u)\geq S_{N,s,rad}(\cB,h)\|u\|^{2}_{L^{2^*_s}(\cB)}$ and  $S_{N,s}(\R^N)>S_{N,s,rad}(\cB,h)$ by assumption, it follows from \eqref{t} that $R_2=0$ which implies that $u\not\equiv0$ thanks to \eqref{equality-r2}. Therefore,
	\begin{equation*}
	Q_{N,s,\cB,h}(u)\leq S_{N,s,rad}(\cB,h)\|u\|^2_{L^{2^*_s}(\cB)},
	\end{equation*}
	which implies that $u$ is an optimizer. Therefore, instead of the inequality \eqref{inequality-r3}, we have equality, yielding $R_1=0$. This implies that $Q_{N,s,\cB,h}(u)=1$ and from this, we conclude that $(u_k)$ converges strongly in $H^s_{0,rad}(\cB)$. The proof is therefore finished. 
\end{proof}

\begin{proof}[Proof of Theorem \ref{existence-of-radial-minimizers} and Theorem \ref{existence-of-radial-minimizers-ld} (completed)]
The proof of Theorem \ref{existence-of-radial-minimizers} and Theorem \ref{existence-of-radial-minimizers-ld}  are immediate consequences of Theorem \ref{main-result-for-radial-minimization-probem},  Lemma \ref{lem:strict-ineq-ld} and Proposition  \ref{key-prop-1}.
\end{proof}

\section{appendix}\label{appendix}
In this section, we prove that the constant function $1$ belongs to $H^s_0(\Omega)$ for $s\in(0,1/2]$. By Sobolev embedding, it is enough to treat the case $s=1/2$.

For every $k\in\N$, we define $\chi_k\in C^{0,1}(\R_+)$  by
\begin{equation}\label{test-function-1d}
\chi_k(t)=\left\{\begin{aligned}
&0\quad\quad\quad\quad\quad\text{if}\quad t\leq\frac{1}{k^2}, \\
&\frac{\log k^2t}{|\log1/k|}\quad~~\text{if}\quad \frac{1}{k^2}\leq t\leq\frac{1}{k}, \\
&1\quad\quad\quad\quad\quad\text{if}\quad t\geq\frac{1}{k}.
\end{aligned}
\right.
\end{equation}
We wish now to approximate the constant function $1$ with respect to the $H^{1/2}(\Omega)$-norm. The general strategy is to build an approximation sequence with $\chi_k$ together with a partition of unity. Before going further in our analysis, we need first of all a $one$-dimensional approximation argument.
\begin{lemma}\label{1-d-approximation}
	We have
	\begin{equation}
	\chi_k\rightarrow1\quad\quad\text{in}~H^{1/2}(\R_+)~~\text{as}~~k\rightarrow\infty.
	\end{equation}
\end{lemma}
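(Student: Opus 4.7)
The plan is to establish the claim by showing separately that $\|\chi_k-1\|_{L^2(\R_+)}\to 0$ and that the Gagliardo seminorm $[\chi_k-1]_{H^{1/2}(\R_+)}\to 0$; note that the latter coincides with $[\chi_k]_{H^{1/2}(\R_+)}$ since constants do not contribute. The $L^2$ part is immediate: $1-\chi_k$ is supported in $(0,1/k)$ and bounded by $1$, so $\|1-\chi_k\|_{L^2(\R_+)}^2\leq 1/k\to 0$.

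For the seminorm, I would decompose $\R_+=A\cup B\cup C$ with $A=(0,1/k^2)$, $B=[1/k^2,1/k]$, $C=(1/k,\infty)$, and split the double integral
\begin{equation*}
[\chi_k]_{H^{1/2}(\R_+)}^2=\int_{\R_+}\int_{\R_+}\frac{(\chi_k(s)-\chi_k(t))^2}{(s-t)^2}\,ds\,dt
\end{equation*}
according to the six ordered pairs of regions. The diagonal contributions $(A,A)$ and $(C,C)$ vanish identically since $\chi_k$ is constant on each, and the pair $(A,C)$ integrates out explicitly to $2\log\frac{k}{k-1}=o(1)$.

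The essential contribution comes from $(B,B)$. Using the identity $\chi_k(s)-\chi_k(t)=\frac{\log(s/t)}{\log k}$ on $B\times B$ and the change of variable $s=r/k^2$, $t=\sigma/k^2$, the integral becomes
\begin{equation*}
\frac{1}{(\log k)^2}\int_1^k\int_1^k\frac{(\log(r/\sigma))^2}{(r-\sigma)^2}\,dr\,d\sigma.
\end{equation*}
A further substitution $r=\sigma\rho$ reduces this to
\begin{equation*}
\frac{1}{(\log k)^2}\int_1^k\frac{d\sigma}{\sigma}\int_{1/\sigma}^{k/\sigma}\frac{(\log\rho)^2}{(\rho-1)^2}\,d\rho\;\leq\;\frac{M}{\log k},
\end{equation*}
where $M:=\int_0^\infty(\log\rho)^2(\rho-1)^{-2}\,d\rho$ is finite (the integrand behaves like $(\log\rho)^2$ near $0$, like $1$ near $1$, and like $(\log\rho)^2/\rho^2$ at infinity). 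Hence $(B,B)=O(1/\log k)\to 0$.

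The mixed terms $(A,B)$ and $(B,C)$ are handled analogously: after the same rescaling $s=r/k^2$ and integrating out the variable in $A$ or $C$, both collapse to a multiple of
\begin{equation*}
\frac{1}{(\log k)^2}\int_1^k\frac{(\log r)^2}{r(r-1)}\,dr,
\end{equation*}
which is $O\bigl((\log k)^{-2}\bigr)$ since $\int_1^\infty \frac{(\log r)^2}{r(r-1)}\,dr<\infty$. The main obstacle is the $(B,B)$ region, which dictates the slower decay rate $1/\log k$; once the logarithmic substitutions are in place and the finiteness of $M$ is checked, the remaining pieces are routine.
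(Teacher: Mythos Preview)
Your proof is correct and follows essentially the same strategy as the paper: split the double integral according to the three natural regions $A=(0,1/k^2)$, $B=[1/k^2,1/k]$, $C=(1/k,\infty)$, rescale each piece, and bound by the finite integral $\int_0^\infty(\log\rho)^2(\rho-1)^{-2}\,d\rho$. Your decomposition is in fact tidier than the paper's, which introduces an extra cut at $2/k^2$ inside $(0,1/k)^2$ and tracks several more subpieces; your direct use of the six ordered pairs and the observation that $(A,B)$ and $(B,C)$ collapse (after the substitution $r\mapsto 1/r$) to the same convergent integral $\int_1^\infty\frac{(\log r)^2}{r(r-1)}\,dr$ avoids that detour.
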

\begin{proof}
	Clearly, by definition $\chi_k\rightarrow1$ a.e. in $\R_+$. The goal is to show that
	\begin{equation}\label{H-half-convergence}
	\|\chi_k-1\|_{H^{1/2}(\R_+)}\rightarrow0\quad\quad\text{as}~~k\rightarrow\infty.
	\end{equation}
	We start by proving that
	\begin{equation}\label{L2-convergence}
	\|\chi_k-1\|_{L^2(\R_+)}\rightarrow0\quad\quad\text{as}~~k\rightarrow\infty.
	\end{equation}
	We have
	\begin{align*}
	\|\chi_k-1\|^2_{L^2(\R_+)}&=\int_{0}^{\infty}(\chi_k-1)^2\ dt=\int_{0}^{1/k^2}(\chi_k-1)^2\ dt+\int_{1/k^2}^{1/k}(\chi_k-1)^2\ dt\\
	&=\frac{1}{k^2}+\int_{1/k^2}^{1/k}\Big(\frac{\log k^2t}{\log k}-1\Big)^2\ dt=\frac{1}{k^2}+\frac{1}{k^2}\int_{1}^{k}\Big(\frac{\log t}{\log k}-1\Big)^2\ dt\\
	&=\frac{1}{k^2}+\frac{1}{k\log^2k}\int_{1/k}^{1}\log^2t\ dt=\frac{1}{k^2}+\frac{1}{k^2\log^2k}\Big(2-\frac{\log^2k}{k}-\frac{2\log k}{k}-\frac{2}{k}\Big).
	\end{align*}
	From the estimate above, \eqref{L2-convergence} follows.\\
	
	Next, we also prove that
	\begin{equation}
	[\chi_k-1]_{H^{1/2}(\R_+)}\rightarrow0\quad\quad\text{as}~~k\rightarrow\infty.
	\end{equation}
	
	We have
	\begin{align*}
	[\chi_k-&1]^2_{H^{1/2}(\R_+)}=\frac{c_{1,1/2}}{2}\int_{0}^{\infty}\int_{0}^{\infty}\frac{(\chi_k(x)-\chi_k(y))^2}{(x-y)^2}\ dxdy\\
	&~~~~=c\Bigg(\int_{0}^{1/k}\int_{0}^{1/k}\cdots+2\int_{0}^{1/k}\int_{1/k}^{\infty}\cdots+\int_{1/k}^{\infty}\int_{1/k}^{\infty}\cdots\Bigg)\frac{(\chi_k(x)-\chi_k(y))^2}{(x-y)^2}\ dxdy.
	\end{align*}
	Since $\chi_k(x)=\chi_k(y)=1$ for $(x,y)\in (1/k,\infty)\times (1/k,\infty)$ then the third integral in the above equality vanishes. Therefore,
	\begin{equation*}
	[\chi_k-1]^2_{H^{1/2}(\R_+)}=c\int_{0}^{\infty}\int_{0}^{\infty}\frac{(\chi_k(x)-\chi_k(y))^2}{(x-y)^2}\ dxdy=c(I_k+J_k) 
	\end{equation*}
	where
	\begin{equation*}
	I_k:=\int_{0}^{1/k}\int_{0}^{1/k}\frac{(\chi_k(x)-\chi_k(y))^2}{(x-y)^2}\ dxdy\quad\text{and}\quad J_k:=2\int_{0}^{1/k}\int_{1/k}^{\infty}\frac{(\chi_k(x)-\chi_k(y))^2}{(x-y)^2}\ dxdy.
	\end{equation*}\\
	\textbf{Estimate of $J_k$.}
	We have
	\begin{align*}
	&\int_{0}^{1/k}\int_{1/k}^{\infty}\frac{(\chi_k(x)-\chi_k(y))^2}{(x-y)^2}\ dxdy\\
	&=\Bigg(\int_{0}^{1/k^2}\int_{1/k}^{\infty}\cdots+\int_{1/k^2}^{1/k}\int_{1/k}^{\infty}\cdots\Bigg)\frac{(\chi_k(x)-\chi_k(y))^2}{(x-y)^2}\ dxdy\\
	&=J^1_k+J^2_k
	\end{align*}
	where
	\begin{equation*}
	J^1_k:=\int_{0}^{1/k^2}\int_{1/k}^{\infty}\frac{(\chi_k(x)-\chi_k(y))^2}{(x-y)^2}\ dxdy\quad\text{and}\quad J^2_k:=\int_{1/k^2}^{1/k}\int_{1/k}^{\infty}\frac{(\chi_k(x)-\chi_k(y))^2}{(x-y)^2}\ dxdy.
	\end{equation*}
	Regarding $J^1_k$, we have from the definition of $\chi_k$ that
	\begin{align}\label{a-1}
	\nonumber J^1_k&=\int_{0}^{1/k^2}\int_{1/k}^{\infty}\frac{1}{(x-y)^2}\ dxdy\stackrel{\tau=\frac{x}{y}}{=}\int_{0}^{1/k^2}\frac{1}{y}\int_{1/ky}^{\infty}\frac{1}{(\tau-1)^2}\ d\tau dy\\
	&=\int_{0}^{1/k^2}\frac{k}{1-ky}\ dy=-\log\Big(1-\frac{1}{k}\Big).
	\end{align}
	For $J^2_k$, we also use the definition of $\chi_k$ to see that
	\begin{align}\label{a-2}
	\nonumber J^2_k&=\int_{1/k^2}^{1/k}\int_{1/k}^{\infty}\frac{\Big(1-\frac{\log k^2x}{\log k}\Big)^2}{(x-y)^2}\ dxdy=\frac{1}{\log^2k}\int_{1/k^2}^{1/k}\int_{1/k}^{\infty}\frac{(\log k-\log k^2x)^2}{(x-y)}\ dxdy\\
	\nonumber&=\frac{1}{\log^2k}\int_{1/k^2}^{1/k}\int_{1/k}^{\infty}\frac{(\log kx)^2}{(x-y)^2}\ dxdy\stackrel{\substack{\tau=kx\\ t=ky}}{=}\frac{1}{\log^2k}\int_{1/k}^{1}\int_{1}^{\infty}\frac{\log^2\tau}{(\tau-t)^2}\ d\tau dt\\
	\nonumber&=\frac{1}{\log^2k}\int_{1}^{\infty}\Big(\frac{1}{(\tau-\frac{1}{k})}-\frac{1}{(\tau-1)}\Big)\log^2\tau\ d\tau\\
	&=\frac{1}{\log^2k}\int_{1}^{\infty}\frac{\frac{1}{k}-1}{(\tau-\frac{1}{k})(\tau-1)}\log^2\tau\ d\tau.
	\end{align}
	Using that $\log\tau\sim \tau-1$ as $\tau\rightarrow1$ and $\frac{\log^2\tau}{(\tau-\frac{1}{k})(\tau-1)}\sim\frac{\log^2\tau}{\tau^2}\leq\frac{c}{\tau^{2-\epsilon}}$ as $\tau\rightarrow\infty$, for every $\epsilon>0$, then the above integral is convergence for $k$ sufficiently large. This implies that
	\begin{equation}\label{a-3}
	J^2_k=o(1)\quad\quad\text{as}~~k\rightarrow\infty.
	\end{equation}
	Combining \eqref{a-1} and \eqref{a-2}, and by using \eqref{a-3}, we find that
	\begin{align}\label{a-4}
	\nonumber J_k&=2\Bigg(-\log\Big(1-\frac{1}{k}\Big)+\frac{1}{\log^2k}\int_{1}^{\infty}\frac{\frac{1}{k}-1}{(\tau-\frac{1}{k})(\tau-1)}\log^2\tau\ d\tau\Bigg)\\
	&\rightarrow0\quad\quad\text{as}~~k\rightarrow\infty.
	\end{align}\\
	\textbf{Estimate of $I_k$.}
	We have
	\begin{align*}
	I_k=&\Bigg(\int_{0}^{1/k^2}\int_{0}^{2/k^2}\cdots+\int_{0}^{1/k^2}\int_{2/k^2}^{1/k}\cdots\\
	&~~~~~~~+\int_{1/k^2}^{1/k}\int_{0}^{2/k^2}\cdots+\int_{1/k^2}^{1/k}\int_{2/k^2}^{1/k}\cdots\Bigg)\frac{(\chi_k(x)-\chi_k(y))^2}{(x-y)^2}\ dxdy\\
	&=I^1_k+I^2_k+I^3_k
	\end{align*}
	where
	\begin{equation*}
	I^1_k:=\int_{0}^{1/k^2}\int_{0}^{2/k^2}\frac{(\chi_k(x)-\chi_k(y))^2}{(x-y)^2}\ dxdy,\quad I^2_k:=\int_{1/k^2}^{1/k}\int_{2/k^2}^{1/k}\frac{(\chi_k(x)-\chi_k(y))^2}{(x-y)^2}\ dxdy
	\end{equation*}
	and
	\begin{equation*}
	I^3_k:=\Bigg(\int_{0}^{1/k^2}\int_{2/k^2}^{1/k}\cdots+\int_{1/k^2}^{1/k}\int_{0}^{2/k^2}\cdots\Bigg)\frac{(\chi_k(x)-\chi_k(y))^2}{(x-y)^2}\ dxdy.
	\end{equation*}
	It now suffices to estimate $I^1_k, I^2_k$ and $I^3_k$.\\
	Concerning $I^1_k$, we have
	\begin{align}\label{a-5}
	\nonumber I^1_k&=\int_{0}^{1/k^2}\int_{1/k^2}^{2/k^2}\frac{\chi_k(x)^2}{(x-y)^2}\ dxdy=\frac{1}{\log^2k}\int_{0}^{1/k^2}\int_{1/k^2}^{2/k^2}\frac{(\log k^2x)^2}{(x-y)^2}\ dxdy\\
	\nonumber&\stackrel{\substack{\tau=k^2x\\ t=k^2y}}{=}\frac{1}{\log^2k}\int_{0}^{1}\int_{1}^{2}\frac{\log^2 \tau}{(\tau-t)^2}\ d\tau dt=\frac{1}{\log^2k}\int_{0}^{1}\int_{1}^{2}\frac{(\log \tau-\log1)^2}{(\tau-t)^2}\ d\tau dt\\
	\nonumber&~~~\leq\frac{c}{\log^2k}\int_{0}^{1}\int_{1}^{2}\frac{(\tau-1)^2}{(\tau-t)^2}\ d\tau dt=\frac{c}{\log^2k}\int_{1}^{2}\int_{0}^{1}\frac{(\tau-1)^2}{(\tau-t)^2}\  dtd\tau\\
	&~~~=\frac{c}{\log^2k}\int_{1}^{2}(\tau-1)^2\Big(\frac{1}{\tau-1}-\frac{1}{\tau}\Big)=\frac{c'}{\log^2k}.
	\end{align} 
	Next, as regards $I^2_k$, the change of variables $\tau=k^2x$ and $t=k^2y$ gives
	\begin{align}\label{a-6}
	\nonumber I^2_k&=\int_{1/k^2}^{1/k}\int_{2/k^2}^{1/k}\frac{(\log k^2x-\log k^2y)^2}{(x-y)^2}\ dxdy=\frac{1}{\log^2k}\int_{1}^{k}\int_{2}^{k}\frac{(\log\tau-\log t)^2}{(\tau-t)^2}\ d\tau dt\\
	\nonumber&=\frac{1}{\log^2k}\int_{1}^{k}\int_{2}^{k}\frac{(\log(\tau/t))^2}{(\tau-t)^2}\ d\tau dt\stackrel{r=\tau/t}{=}\frac{1}{\log^2k}\int_{1}^{k}\frac{1}{t}\int_{2/t}^{k/t}\frac{\log^2r}{(r-1)^2}\ drdt\\
	&\leq\frac{1}{\log^2k}\int_{1}^{k}\frac{dt}{t}\int_{0}^{\infty}\frac{\log^2r}{(r-1)^2}\ dr=\frac{c}{\log k}.
	\end{align} 
	For $I^3_k$, we have
	\begin{align*}
	I^3_k&\leq 2\int_{0}^{2/k^2}\int_{1/k^2}^{1/k}\frac{(\chi_k(x)-\chi_k(y))^2}{(x-y)^2}\ dxdy=2\int_{0}^{2/k^2}\int_{1/k^2}^{1/k}\frac{(\chi_k(x)-\chi_k(y))^2}{(x-y)^2}\ dxdy\\
	&=2\int_{0}^{1/k^2}\int_{1/k^2}^{1/k}\frac{(\chi_k(x)-\chi_k(y))^2}{(x-y)^2}\ dxdy+2\int_{1/k^2}^{2/k^2}\int_{1/k^2}^{1/k}\frac{(\chi_k(x)-\chi_k(y))^2}{(x-y)^2}\ dxdy.
	\end{align*}
	Now,
	\begin{align}\label{a-7}
	\nonumber&\int_{0}^{1/k^2}\int_{1/k^2}^{1/k}\frac{(\chi_k(x)-\chi_k(y))^2}{(x-y)^2}\ dxdy=\frac{1}{\log^2k}\int_{0}^{1/k^2}\int_{1/k^2}^{1/k}\frac{(\log k^2x)^2}{(x-y)^2}\ dxdy\\
	\nonumber&\stackrel{\substack{\tau=k^2x\\ t=k^2y}}{=}\frac{1}{\log^2k}\int_{0}^{1}\int_{1}^{k}\frac{\log^2\tau}{(\tau-t)^2}\ d\tau dt=\frac{1}{\log^2k}\int_{1}^{k}\Big(\frac{1}{(\tau-1)^2}-\frac{1}{\tau^2}\Big)\log^2\tau\ d\tau\\
	&\leq\frac{1}{\log^2k}\int_{1}^{\infty}\Big(\frac{1}{(\tau-1)^2}-\frac{1}{\tau^2}\Big)\log^2\tau\ d\tau=\frac{c}{\log^2k}.
	\end{align}
	Arguing as in the case of $I^2_k$, we have that
	\begin{align}\label{a-8}
	\nonumber&\int_{1/k^2}^{2/k^2}\int_{1/k^2}^{1/k}\frac{(\chi_k(x)-\chi_k(y))^2}{(x-y)^2}\ dxdy=\frac{1}{\log^2k}\int_{1}^{k}\int_{1}^{2}\frac{(\log t-\log\tau)^2}{(t-\tau)^2}\ dtd\tau\\
	\nonumber&\stackrel{r=t/\tau}{=}\frac{1}{\log^2k}\int_{1}^{k}\frac{d\tau}{\tau}\int_{1/\tau}^{2/\tau}\frac{\log^2r}{(r-1)^2}\ dr\leq\frac{1}{\log^2k}\int_{1}^{k}\frac{d\tau}{\tau}\int_{1}^{\infty}\frac{\log^2r}{(r-1)^2}\ dr\\
	&~~=\frac{c}{\log k}.
	\end{align}
	Putting together \eqref{a-5}, \eqref{a-6}, \eqref{a-7} and \eqref{a-8}, we find that
	\begin{equation}\label{a-9}
	I_k\leq\frac{c}{\log^2k}+\frac{c}{\log k}\rightarrow0\quad\quad\text{as}~~k\rightarrow\infty.
	\end{equation}
	From \eqref{a-4} and \eqref{a-9}, we conclude that
	\begin{equation}\label{a-26}
	[\chi_k-1]_{H^{1/2}(\R_+)}\rightarrow0\quad\quad\text{as}~~ k\rightarrow\infty.
	\end{equation}
	Now, \eqref{H-half-convergence} follows by combining \eqref{L2-convergence} and \eqref{a-26}. As wanted.
\end{proof}
\begin{defi}\label{def:Lipschitz}
We say that an open subset $\O$ of $\R^N$ is  Lipschitz  if for each $q\in \partial\Omega$, there exist  a tangent hyperplane $H_q$,  a normal $N_q$ of $H_q$, $r_q>0$, open $r_q$-balls $B_{r_q}\subset H_q$ and a function $\Phi_q:  B_{r_q}\times I\to \R^N$ such that
\begin{itemize}
	\item [$(i)$] $\Phi_q(B_{r_q}\cap H^+_q)\subset\Omega$
	\item[$(ii)$] $\Phi_q(B_{r_q}\cap \partial H^+_q)\subset\partial\Omega$
	\item[$(iii)$] $C^{-1}|x-y|\leq|\Phi_q(x)-\Phi_q(y)|\leq C|x-y|,~~~C>1,~~x,y\in B_{r_q}\times I,~~I\subset\R$.
\end{itemize}
Here, $H^+_q$ is the upper half-tangent hyperplane containing $N_q$.  Put $Q_q:=B_{r_q}\times (-r_q,r_q)$ and we recall that $B_{r_q}$ is a $(N-1)$-ball.\\
\end{defi}

\begin{remark}\label{rem:Lipschitz}
	We would like to make the following observation. It is well-known that a domain $\Omega$ is said to be \textit{strongly} Lipschitz if its boundary can be seen as a local graph of a Lipschitz function $\phi:\R^{N-1}\rightarrow\R$. Moreover, by mean of a vectorfield $\eta$ (with $|\eta|=1$ on $\partial\Omega$) which is globally transversal \footnote{$\eta$ is said to be globally tranversal to $\partial\Omega$ if there is $\kappa>0$ such that $\eta\cdot\nu\geq\kappa$ a.e. on $\partial\Omega$. Here $\nu$ is the unit normal vector to $\partial\Omega$.} to $\partial\Omega$, one can construct a bi-Lipschitz mapping via $\phi$. In particular, $\Omega$ fulfills properties $(i)$-$(iii)$. However, every Lipschitz domain in the sense of definition $(i)$-$(iii)$ is not necessarily a local graph of a Lipschitz function. This clearly shows that \textit{strongly} Lipschitz domain is also a Lipschitz domain. But the converse is not true. This is consistent with the fact that \textit{strongly} Lipschitz domains are not stable under bi-Lipschitz map. See \cite{hofmann2007geometric} for more details.
\end{remark}

Clearly, there exists $\beta>0$ such that 
\begin{equation}
\overline{\Omega_{\beta}}:=\{0\leq\delta_{\Omega}(x)\leq\beta\}\subset\cup_{q\in\partial\Omega}\Phi_q(Q_{q}).
\end{equation}
We recall that $\Omega_{\beta}$ is the so-called \textit{inner tubular neighbourhood} of $\Omega$. By compactness, there exists $m\in \N$ such that 
\begin{equation}
\overline{\Omega_{\beta}}:=\{0\leq\delta_{\Omega}(x)\leq\beta\}\subset\cup^{m}_{j=1}\Phi_{q_j}(Q_{q_j}).
\end{equation}
We will write $j$ in the place of $q_j$ provided there is no ambiguity.
For $j=1,\dots,m$, let $u_k^j$ be a sequence define by
\begin{equation*}
u_k^j(\Phi_{j}(x))=\chi_k(x_N),~~~\forall x\in Q_j,
\end{equation*}
where $\chi_k$ is defined  in \eqref{test-function-1d}.  Equivalently, $u_k^j$ can be defined as
\begin{equation}\label{b-8}
u_k^j(x)=\chi_k(\Phi^{-1}_{j}(x)\cdot N_{j}),~~~\forall x\in\Omega.
\end{equation}
Define $\cO_j:=\Phi_j(Q_j)$ and $\cO_{m+1}=\Omega\setminus\overline{\Omega_{\beta}}$. We also write $Q_j^+:=B_{r_j}\times (0,r_j)$.\\

We have the following.
\begin{lemma}\label{an-intermediate-estimate}
	For all $j=1,\dots,m$ there exists a positive constant $C>0$ depending only on $j, m, \Omega$ and $N$ such that
	\begin{equation}\label{eq:first-est}
	\|u^j_k-\mathbbm{1}_{\Omega}\|_{H^{1/2}(\cO_j\cap\Omega)}\leq C\|\chi_k-1\|_{H^{1/2}(0,r_j)}.
	\end{equation} 
\end{lemma}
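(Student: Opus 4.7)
My plan is to reduce the estimate on the "boundary chart" $\cO_j\cap\Omega$ to a model estimate on the half-cylinder $Q_j^+ = B_{r_j}\times(0,r_j)$ via the bi-Lipschitz change of variables $\Phi_j$, and then to reduce the model estimate, by Fubini and integration of the tangential variables, to the one-dimensional seminorm of $\chi_k-1$ on $(0,r_j)$. Concretely, on $\cO_j\cap\Omega$ we have $\mathbbm{1}_\Omega=1$ and, by the very definition \eqref{b-8} of $u_k^j$ together with the fact that $\Phi_j(Q_j^+)=\cO_j\cap\Omega$, the pullback under $\Phi_j$ identifies $u_k^j-1$ with $\chi_k(x_N)-1$ on $Q_j^+$. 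Hence it suffices to prove
\[
\|\chi_k(x_N)-1\|_{H^{1/2}(Q_j^+)}\leq C\|\chi_k-1\|_{H^{1/2}(0,r_j)}.
\]

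To push this through, the first step is to record the bi-Lipschitz invariance of the $H^{1/2}$-norm. Since $\Phi_j$ satisfies $C^{-1}|x-y|\leq|\Phi_j(x)-\Phi_j(y)|\leq C|x-y|$, Rademacher's theorem gives a.e.\ defined Jacobians bounded above and below, so for any measurable set $A\subset Q_j$ and any $v\in H^{1/2}(A)$,
\[
\|v\circ\Phi_j^{-1}\|_{L^2(\Phi_j(A))}^2 \leq C\|v\|_{L^2(A)}^2, \qquad
[v\circ\Phi_j^{-1}]_{H^{1/2}(\Phi_j(A))}^2 \leq C[v]_{H^{1/2}(A)}^2,
\]
with the reverse inequalities also holding. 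Applying this with $A=Q_j^+$ and $v(x)=\chi_k(x_N)-1$ reduces \eqref{eq:first-est} to the model estimate above.

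The $L^2$ part is immediate by Fubini: $\|\chi_k(x_N)-1\|_{L^2(Q_j^+)}^2=|B_{r_j}|\,\|\chi_k-1\|_{L^2(0,r_j)}^2$. For the seminorm, writing $x=(x',x_N)$, $y=(y',y_N)$ with $x',y'\in B_{r_j}\subset\R^{N-1}$, one needs to control
\[
\int_{B_{r_j}}\!\!\int_{B_{r_j}}\!\!\int_0^{r_j}\!\!\int_0^{r_j}\frac{(\chi_k(x_N)-\chi_k(y_N))^2}{(|x'-y'|^2+|x_N-y_N|^2)^{(N+1)/2}}\,dx_N\,dy_N\,dx'\,dy'.
\]
For fixed $x_N\neq y_N$, translating in $y'$ and enlarging the domain to $\R^{N-1}$ gives
\[
\int_{B_{r_j}}\!\!\int_{B_{r_j}}\!\frac{dx'\,dy'}{(|x'-y'|^2+t^2)^{(N+1)/2}} \leq C\int_{\R^{N-1}}\frac{dz}{(|z|^2+t^2)^{(N+1)/2}} = \frac{C'}{t^2},
\]
where $t=|x_N-y_N|$ and the last identity follows by rescaling $z=tw$. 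Plugging this back and integrating in $x_N,y_N$ yields exactly a constant multiple of $[\chi_k-1]_{H^{1/2}(0,r_j)}^2$, finishing the estimate.

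The only nontrivial point is the bi-Lipschitz invariance of the $H^{1/2}$-norm on (non-smooth) domains, but this is classical and follows directly from the two-sided comparison $|\Phi_j(x)-\Phi_j(y)|\asymp|x-y|$ together with the bounded change of Jacobian; no regularity of $\partial\Omega$ beyond the Lipschitz hypothesis is needed. The constant $C$ produced depends on the bi-Lipschitz constant of $\Phi_j$, on $r_j$, and on $N$, hence ultimately on $j$, $m$, $\Omega$ and $N$ as claimed.
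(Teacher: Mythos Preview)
Your proposal is correct and follows essentially the same route as the paper: pull back to $Q_j^+$ via the bi-Lipschitz chart $\Phi_j$, use the two-sided comparison $|\Phi_j(z)-\Phi_j(\bar z)|\asymp|z-\bar z|$ (together with Jacobian bounds) to transfer the $H^{1/2}$-norm, and then integrate out the tangential variables by enlarging $B_{r_j}$ to $\R^{N-1}$ and rescaling to recover the one-dimensional seminorm of $\chi_k-1$. The paper writes out the change of variables explicitly rather than invoking bi-Lipschitz invariance as a black box, but the argument is the same.
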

\begin{proof}
	For $j=1,\dots,m$, by using the change of variables $x=\Phi_j(z)$ and $y=\Phi_j(\overline{z})$, we get
	\begin{align}
	&\int_{\cO_j\cap\Omega}\int_{\cO_j\cap\Omega}\frac{(u_k(x)-u_k(y))^2}{|x-y|^{N+1}}\ dxdy=\int_{Q_j^+}\int_{Q_j^+}\frac{(u_k(\Phi_j(z))-u_k(\Phi_j(\overline{z})))^2}{|\Phi_j(z)-\Phi_j(\overline{z})|^{N+1}}\ dzd\overline{z} \nonumber\\
	&=\int_{Q_j^+}\int_{Q_j^+}\frac{(\chi_k(z_N)-\chi_k(\overline{z}_N))^2}{|\Phi_j(z)-\Phi_j(\overline{z})|^{N+1}}\ dzd\overline{z}\leq C\int_{Q_j^+}\int_{Q_j^+}\frac{(\chi_k(z_N)-\chi_k(\overline{z}_N))^2}{|z-\overline{z}|^{N+1}}\ dzd\overline{z} \nonumber\\
	& \leq C\int_{B_{r_j}}\int_{B_{r_j}}\int_{0}^{r_j}\int_{0}^{r_j}\frac{(\chi_k(z_N)-\chi_k(\overline{z}_N))^2}{|z-\overline{z}|^{N+1}}\ dzd\overline{z} \nonumber\\
	&\leq C\int_{B_{r_j}}dz'\int_{H_j}d\ov z'\int_{0}^{r_j}\int_{0}^{r_j}\frac{(\chi_k(z_N)-\chi_k(\overline{z}_N))^2}{(|z'-\overline{z}'|^2+|z_N-\overline{z}_N|^2)^{\frac{N+1}{2}}}\ dz_Nd\overline{z}_N .
	\label{b-16}
	\end{align}
	By translation and rotation, we  have 
	\begin{align*}
	&\int_{B_{r_j}}dz'\int_{H_j}d\ov z'\int_{0}^{r_j}\int_{0}^{r_j}\frac{(\chi_k(z_N)-\chi_k(\overline{z}_N))^2}{(|z'-\overline{z}'|^2+|z_N-\overline{z}_N|^2)^{\frac{N+1}{2}}}\ dz_Nd\overline{z}_N \\
	&= \int_{ B_{r_j}} dz'\int_{\R^{N-1}} d\ov z'\int_{0}^{r_j}\int_{0}^{r_j}\frac{(\chi_k(z_N)-\chi_k(\overline{z}_N))^2}{(|z'-\overline{z}'|^2+|z_N-\overline{z}_N|^2)^{\frac{N+1}{2}}}\ dz_Nd\overline{z}_N \\
	&\leq CA\int_{0}^{r_j}\int_{0}^{r_j}\frac{(\chi_k(z_N)-\chi_k(\overline{z}_N))^2}{ |z_N-\overline{z}_N|^2 }\ dz_Nd\overline{z}_N ,
	\end{align*}
	where $A= \int_{\R^{N-1}}\frac{dl}{(1+|l|^2)^{(N+1)/2}}\leq C$ and $B_{r_j}$ is a bounded  open subset of $\R^{N-1}$.
	Therefore, since the estimate of the $L^2$ norm follows easily,  this and \eqref{b-16} give \eqref{eq:first-est}, concluding the proof.
\end{proof}
Consider  $0\leq \psi_j\in C^\infty_c(\cO_j)$ a partitioning of unity subordinated to $\{\cO_j\}_{j=1,\dots,m+1}$. Define
\begin{equation}\label{approximation-sequence}
u_k:=\sum_{j=1}^{m+1}\psi_ju_k^j \in C^{0,1}_c(\Omega),
\end{equation} 
where   $u^{m+1}_k\equiv1$ on $\Omega$.
We have the following approximation.
\begin{lemma}\label{approximation-result}
	There holds
	\begin{equation}
	\|u_k-\mathbbm{1}_{\Omega}\|_{H^{1/2}(\Omega)}\rightarrow0\quad\quad\text{as}~~k\rightarrow\infty.
	\end{equation}
\end{lemma}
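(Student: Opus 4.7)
The plan is to reduce the approximation problem on $\Omega$ to the one-dimensional statement of Lemma \ref{1-d-approximation} via the partition of unity. First I would use $\sum_{j=1}^{m+1}\psi_j \equiv 1$ on $\Omega$ to write
$$
u_k-\mathbbm{1}_{\Omega}=\sum_{j=1}^{m+1}\psi_j(u_k^j-1).
$$
Since $u_k^{m+1}\equiv 1$ on $\Omega$, the $j=m+1$ term vanishes, so by the triangle inequality it suffices to prove, for each $j=1,\dots,m$,
$$
\|\psi_j(u_k^j-1)\|_{H^{1/2}(\Omega)}\longrightarrow 0\quad\text{as }k\to\infty.
$$

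The next step is a standard multiplier-and-extension estimate: because $\psi_j\in C^\infty_c(\cO_j)$ has compact support strictly inside $\cO_j$, the function $\psi_j(u_k^j-1)$ (defined a priori only on $\cO_j\cap\Omega$) can be extended by zero to all of $\Omega$ while preserving the $H^{1/2}$ norm up to a constant. More precisely, setting $v_k^j:=u_k^j-1$, I would estimate
$$
\|\psi_j v_k^j\|_{H^{1/2}(\Omega)}^2\leq C\Bigl(\|\psi_j\|_{L^\infty}^2\,[v_k^j]_{H^{1/2}(\cO_j\cap\Omega)}^2+\|v_k^j\|_{L^2(\cO_j\cap\Omega)}^2\Bigr),
$$
where the $L^2$ term absorbs two contributions: the commutator term $\int\!\!\int v_k^j(y)^2 (\psi_j(x)-\psi_j(y))^2|x-y|^{-N-1}\,dxdy$ (finite since $\psi_j$ is Lipschitz and compactly supported), and the cross interactions between $\text{supp}(\psi_j)$ and $\Omega\setminus\cO_j$, which are controlled because $\text{dist}(\text{supp}(\psi_j),\,\Omega\setminus\cO_j)>0$ makes $\int_{\Omega\setminus\cO_j}|x-y|^{-N-1}dy$ uniformly bounded for $x\in\text{supp}(\psi_j)$. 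Hence
$$
\|\psi_j(u_k^j-1)\|_{H^{1/2}(\Omega)}\leq C_j\,\|u_k^j-\mathbbm{1}_{\Omega}\|_{H^{1/2}(\cO_j\cap\Omega)}.
$$

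To conclude, I would invoke Lemma \ref{an-intermediate-estimate}, which yields $\|u_k^j-\mathbbm{1}_{\Omega}\|_{H^{1/2}(\cO_j\cap\Omega)}\leq C\,\|\chi_k-1\|_{H^{1/2}(0,r_j)}$, and then Lemma \ref{1-d-approximation}, which gives $\|\chi_k-1\|_{H^{1/2}(\R_+)}\to 0$ (and in particular on $(0,r_j)$). Summing the finitely many contributions $j=1,\dots,m$ delivers the desired convergence.

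The main technical obstacle is the multiplier-and-extension-by-zero estimate in the second step: one must be careful that cutting off by $\psi_j$ and extending by zero across the inner boundary of $\cO_j\cap\Omega$ does not introduce an unbounded jump contribution to the Gagliardo seminorm on $\Omega$. The compact support of $\psi_j$ inside $\cO_j$ (and thus the positive distance between $\text{supp}(\psi_j)$ and the part of $\partial(\cO_j\cap\Omega)$ lying in $\Omega$) is precisely what tames these nonlocal cross-terms. Everything else is a routine triangle-inequality consolidation.
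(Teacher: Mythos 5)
Your argument is correct and follows essentially the same route as the paper: the same partition-of-unity decomposition $u_k-\mathbbm{1}_\Omega=\sum_j\psi_j(u_k^j-1)$, the same treatment of the nonlocal cross-terms via the positive distance between $\supp\psi_j$ and $\partial\cO_j$, the same Lipschitz commutator bound absorbed into the $L^2$ norm, and the same conclusion via Lemma \ref{an-intermediate-estimate} and Lemma \ref{1-d-approximation}. The only difference is cosmetic (you organize the estimate per index $j$ before summing, while the paper sums first and splits into $I_1(k)$, $I_2(k)$), so no further comment is needed.
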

\begin{proof}
	We estimate
	\begin{align*}
	[u_k-\mathbbm{1}_\Omega]_{H^{1/2}(\Omega)}^2&\leq \left( \sum_{j=1}^{m+1} [\psi_j u_k^j-\psi_j]_{H^{1/2}(\Omega)}\right)^2 \leq m \sum_{j=1}^m [\psi_j u_k^j-\psi_j]_{H^{1/2}(\Omega)}^2\\
	&\leq  C \sum_{j=1}^{m} \int_{\cO_j\cap\Omega\times \cO_j\cap\Omega}\dots\,dxdy+    C\sum_{j=1}^{m} \int_{ \Omega\setminus \cO_j\times\Omega\cap \cO_j }\dots\, dxdy\\
	&=: C I_1(k)+ C I_2(k).
	\end{align*}
	We now estimate $I_1(k)$ and $I_2(k)$. Let us start with $I_2(k)$.\\
	
	We have
	\begin{align}\label{eq:e0}
	\nonumber I_2(k)=& \sum_{j=1}^{m} \int_{\Omega\setminus \cO_j\times \Omega\cap \cO_j}\frac{[(\psi_j u_k^j-\psi_j)(x)-(\psi_j u_k^j-\psi_j)(y)]^2}{|x-y|^{N+1}}dxdy\\
	\nonumber &= \sum_{j=1}^{m} \int_{ \Omega \setminus \cO_j}\frac{dx}{|x-y|^{N+1}} \int_{  \Omega\cap  \textrm{Supp}\psi_j}{ (\psi_j u_k^j-\psi_j)(y)^2}dy\\
	\nonumber&\leq  C \sum_{j=1}^{m} \textrm{dist}( \textrm{Supp}\psi_j, \partial\cO_j)^{-N-1}  \int_{   \Omega\cap \cO_j  }\psi_j^2 | u_k^j(y)-1|^2dy\\
	&\leq C(N)\max_{1\leq j\leq {m}} \textrm{dist}( \textrm{Supp}\psi_j, \partial\cO_j)^{-N-1}   \sum_{j=1}^{m}  \|u_k^j-\mathbbm{1}_{\Omega}\|_{L^{2}( \Omega\cap \cO_j  )}^2.
	\end{align}
	Now regarding $I_1(k)$, we have 
	\begin{align*}
	&I_1(k)=\sum_{j=1}^{m}\int_{\cO_j\cap\Omega}\int_{\cO_j\cap\Omega}\frac{[\psi_j(x)(u^j_k(x)-1)-\psi_j(y)(u^j_k(y)-1)]^2}{|x-y|^{N+1}}\ dxdy\\
	&=\sum_{j=1}^{m}\int_{\cO_j\cap\Omega}\int_{\cO_j\cap\Omega}\frac{[\psi_j(x)((u^j_k(x)-1)-(u^j_k(y)-1))+(\psi_j(x)-\psi_j(y))(u^j_k(y)-1)]^2}{|x-y|^{N+1}}\ dxdy\\
	&\leq2\sum_{j=1}^{m}\int_{\cO_j\cap\Omega}\int_{\cO_j\cap\Omega}\frac{\psi_j(x)^2[(u^j_k(x)-1)-(u^j_k(y)-1)]^2}{|x-y|^{N+1}}\ dxdy\\
	&~~~~~~~~~+2\sum_{j=1}^{m}\int_{\cO_j\cap\Omega}\int_{\cO_j\cap\Omega}\frac{(\psi_j(x)-\psi_j(y))^2(u^j_k(y)-1)^2}{|x-y|^{N+1}}\ dxdy\\
	&=I^1_1(k)+I^2_1(k),
	\end{align*}
	where
	\begin{align}\label{eq:e1}
	\nonumber I^1_1(k)&=2\sum_{j=1}^{m}\int_{\cO_j\cap\Omega}\int_{\cO_j\cap\Omega}\frac{\psi_j(x)^2[(u^j_k(x)-1)-(u^j_k(y)-1)]^2}{|x-y|^{N+1}}\ dxdy\\
	\nonumber&\leq2\sum_{j=1}^{m}\int_{\cO_j\cap\Omega}\int_{\cO_j\cap\Omega}\frac{[(u^j_k(x)-1)-(u^j_k(y)-1)]^2}{|x-y|^{N+1}}\ dxdy\qquad\quad(\text{since}~0\leq\psi_j\leq1)\\
	&=c\sum_{j=1}^{m}[u^j-\mathbbm{1}_{\Omega}]^2_{H^{1/2}(\cO_j\cap\Omega)} 
	\end{align}
	and
	\begin{equation*}
	I^2_1(k)=2\sum_{j=1}^{m}\int_{\cO_j\cap\Omega}\int_{\cO_j\cap\Omega}\frac{(\psi_j(x)-\psi_j(y))^2(u^j_k(y)-1)^2}{|x-y|^{N+1}}\ dxdy.
	\end{equation*}
	Using that $\psi_j$ is Lipschitz, we get
	\begin{align*}
	&2\int_{\cO_j\cap\Omega}\int_{\cO_j\cap\Omega}\frac{(\psi_j(x)-\psi_j(y))^2(u^j_k(y)-1)^2}{|x-y|^{N+1}}\ dxdy\\
	&\leq c(j)^2\iint_{|x-y|<1}\frac{(u^j_k(y)-1)^2|x-y|^2}{|x-y|^{N+1}}\ dxdy+8\iint_{|x-y|\geq1}\frac{(u^j_k(y)-1)^2}{|x-y|^{N+1}}\ dxdy\\
	&\leq
	\tilde{c}(j)\|u^j_k-\mathbbm{1}_{\Omega}\|^2_{L^2(\cO_j\cap\Omega)}
	\end{align*}
	which implies that
	\begin{equation}\label{eq:e2}
	I^2_1(k)\leq\max_{1\leq j\leq m}\tilde{c}(j)\sum_{j=1}^{m}\|u^j_k-\mathbbm{1}_{\Omega}\|^2_{L^2(\cO_j\cap\Omega)}.
	\end{equation}
	Finally, \eqref{eq:e0}, \eqref{eq:e1} and \eqref{eq:e2} yield 
	\begin{align}\label{eq:claim-1}
	\nonumber\|u_k-\mathbbm{1}_\Omega\|_{H^{1/2}(\Omega)}^2&=\|u_k-\mathbbm{1}_\Omega\|^2_{L^2(\Omega)}+[u_k-\mathbbm{1}_{\Omega}]^2_{H^{1/2}(\Omega)}\\
	\nonumber&\leq c\sum_{j=1}^{m}\|u^j_k-\mathbbm{1}_{\Omega}\|^2_{L^2(\cO_j\cap\Omega)}+C I_1(k)+C I_2(k)\\
	&=\tilde{c}\sum_{j=1}^{m}\|u^j_k-\mathbbm{1}_{\Omega}\|^2_{H^{1/2}(\cO_j\cap\Omega)} \leq    C(N,m) \sum_{j=1}^{m} \|\chi_k-1\|_{H^{1/2}(0,r_j)}^2.
	\end{align}
	In the latter inequality, we used Lemma \ref{an-intermediate-estimate}. Now, since from Lemma \ref{1-d-approximation} there holds $\|\chi_k-1\|_{H^{1/2}(0,r_j)}^2\to 0$ as $k\to \infty$, we complete the proof by letting $k\rightarrow\infty$ in the inequality \eqref{eq:claim-1}.
\end{proof}

As a direct consequence of the above approximation results, we have the following.
\begin{prop}\label{asymptotice-of-sobolev-constant-for-s-geq-1/2}
	Let $N\geq2,~s\in(0,1/2]$ and let $\Omega\subset\R^N$ be a bounded Lipschitz domain. Then
	\begin{equation}
	S_{N,s}(\Omega)=0.
	\end{equation}
\end{prop}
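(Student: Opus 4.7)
The plan is to employ the approximation sequence $u_k \in C^{0,1}_c(\Omega)$ constructed in \eqref{approximation-sequence} as a family of competitors in the Rayleigh quotient defining $S_{N,s}(\Omega)$. Since each $u_k$ is Lipschitz with compact support inside $\Omega$, a standard mollification argument places $u_k$ in $H^s_0(\Omega)$ for every $s\in(0,1)$.

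First I would treat the core case $s=1/2$. The crucial observation is that the constant function $\mathbbm{1}_\Omega$ has vanishing $H^{1/2}$ Gagliardo seminorm over $\Omega$, i.e.\ $Q_{N,1/2,\Omega}(\mathbbm{1}_\Omega)=0$. Lemma~\ref{approximation-result} gives $u_k\to\mathbbm{1}_\Omega$ in $H^{1/2}(\Omega)$, so in particular $Q_{N,1/2,\Omega}(u_k)\to 0$. On the other hand, since $0\leq u_k\leq 1$ and $u_k\to 1$ a.e.\ in $\Omega$, the dominated convergence theorem yields $\|u_k\|_{L^{2^*_{1/2}}(\Omega)}\to |\Omega|^{(N-1)/(2N)}>0$. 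Using $u_k$ as a test function in \eqref{minimization-problem-on-domain-0} and passing to the limit produces $S_{N,1/2}(\Omega)\leq 0$; combined with the trivial lower bound $S_{N,1/2}(\Omega)\geq 0$, this gives $S_{N,1/2}(\Omega)=0$.

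For $s\in(0,1/2)$ I would reuse the same sequence $u_k$ together with the elementary diameter estimate
$$
\frac{1}{|x-y|^{N+2s}}\leq \frac{\diam(\Omega)^{1-2s}}{|x-y|^{N+1}},\qquad x,y\in\Omega,
$$
which implies $[u_k-\mathbbm{1}_\Omega]^2_{H^s(\Omega)}\leq C(N,s,\Omega)\,[u_k-\mathbbm{1}_\Omega]^2_{H^{1/2}(\Omega)}$ and hence $Q_{N,s,\Omega}(u_k)\to 0$. The $L^{2^*_s}$ denominator is handled in the same way by dominated convergence since $u_k$ is uniformly bounded and $2^*_s<2^*_{1/2}$, giving $\|u_k\|_{L^{2^*_s}(\Omega)}\to |\Omega|^{1/2^*_s}>0$. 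The same limiting argument as for $s=1/2$ then produces $S_{N,s}(\Omega)=0$.

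All the real work has already been performed in Lemmas~\ref{1-d-approximation}, \ref{an-intermediate-estimate}, and \ref{approximation-result}: the nontrivial point is constructing a sequence in $C^{0,1}_c(\Omega)$ that approximates $\mathbbm{1}_\Omega$ in the $H^{1/2}(\Omega)$ norm, despite $\mathbbm{1}_\Omega$ being discontinuous as a function on $\R^N$. Once that delicate approximation is in hand, the proposition itself reduces to the short limiting argument above, and I do not anticipate any further obstacle.
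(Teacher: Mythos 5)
Your proposal is correct and follows essentially the same route as the paper: it tests the Rayleigh quotient with the sequence $u_k$ from \eqref{approximation-sequence}, uses Lemma \ref{approximation-result} together with the vanishing $H^{1/2}(\Omega)$-seminorm of $\mathbbm{1}_\Omega$ to kill the numerator, controls the case $s<1/2$ by the bounded-domain comparison $Q_{N,s,\Omega}(u_k)\leq C\,Q_{N,1/2,\Omega}(u_k)$, and notes that $\|u_k\|_{L^{2^*_s}(\Omega)}$ stays bounded away from zero. The only cosmetic difference is that you justify the non-degeneracy of the denominator by dominated convergence while the paper simply records $\liminf_{k\to\infty}\|u_k\|^2_{L^{2^*_s}(\Omega)}>0$.
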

Before proving the proposition above, we mention that our result extends to $s=1/2$ the one obtained in \cite[Lemma 16]{frank2018minimizers}. Below, we give the 
\begin{proof}[Proof of Proposition \ref{asymptotice-of-sobolev-constant-for-s-geq-1/2}]
	By definition
	\begin{equation}\label{alternative-minimization-problem-on-domain-for-s=1/2}
	S_{N,s}(\Omega)=\inf_{\substack{u\in H^{s}_0(\Omega)\\ u\neq0}}\frac{Q_{N,s,\Omega}(u)}{\|u\|^2_{L^{2^*_s}(\Omega)}}=\inf_{\substack{u\in C^{0,1}_c(\Omega)\\ u\neq0}}\frac{Q_{N,s,\Omega}(u)}{\|u\|^2_{L^{2^*_s}(\Omega)}},
	\end{equation}
	where $C^{0,1}_c(\O)$ is the space of Lipschitz  functions with compact support.  Now by  Lemma \ref{approximation-result}, we get 
	\begin{equation}\label{ineq-2}
	0\leq S_{N,s}(\Omega)\leq \frac{Q_{N,s,\Omega}(u_k)}{\|u_k\|^2_{L^{2^*_s}(\Omega)}}\leq C(N,s) \frac{Q_{N,1/2,\Omega}(u_k)}{\|u_k\|^2_{L^{2^*_s}(\Omega)}}=C(N,s)\frac{[u_k-\mathbbm{1}_{\Omega}]_{H^{1/2}(\Omega)}}{\|u_k\|^2_{L^{2^*_s}(\Omega)}}\to0,
	\end{equation}
where $u_k$ is defined by \eqref{approximation-sequence}, which satisfies  $\liminf_{k\to \infty}\|u_k\|^2_{L^{2^*_s}(\Omega)}>0$.
\end{proof}

\bibliographystyle{ieeetr}

\end{document}